    \def\qed{\hfill$\sqcap\kern-8.0pt\hbox{$\sqcup$}$\\}
    \def\beq{\begin{eqnarray}}
    \def\eeq{\end{eqnarray}}
    \def\beqq{\begin{eqnarray*}}
    \def\eeqq{\end{eqnarray*}}
    \def\re{\textnormal {Re}}
    \def\p{{\mathbb P}}
    \def\e{{\mathbb E}}
    \def\r{{\mathbb R}}
    \def\c{{\mathbb C}}
    \def\pp{{\textnormal p}}	
    \def\d{{\textnormal d}}
    \def\i{{\textnormal i}}
    \def\kk{{\mathcal K}}
    \def\pp{{\mathcal P}} 	
     \def\qq{{\mathcal Q}} 	
    \def\vv{{\textnormal v}}
    \def\ee{{\textnormal e}}
\theoremstyle{thmstyleone}
\newtheorem{theorem}{Theorem}[section]
\newtheorem{lemma}[theorem]{Lemma}
\newtheorem{proposition}[theorem]{Proposition}
\newtheorem{corollary}[theorem]{Corollary}
\theoremstyle{thmstylethree}
\newtheorem{definition}{Definition}[section]
\theoremstyle{thmstyletwo}
\newtheorem{remark}[theorem]{Remark}
\def\topp#1{^{(#1)}}
\newcommand{\uC}{ \mathsf c }
\newcommand{\vA}{  \mathsf a }
\newcommand{\TT}{\mathbb{T}}
\numberwithin{equation}{section}
\newcommand{\la}{\lambda}
\newcommand{\eps}{\varepsilon}
    \newcommand{\comment}[1]{\fbox{\footnotesize WB: \color{magenta}#1}}
\def\vv#1{{\boldsymbol #1}}
\def\topp#1{^{(#1)}}
     \newcommand{\Kab}{\mathfrak{K}_{\vA,\uC}\topp \tau}
       \newcommand{\DabLa}{\mathfrak D_{\vA,\uC}\topp \tau}
               \newcommand{\DbaLa}{\mathfrak D_{\uC,\vA}\topp \tau}
   \newcommand{\CabLa}{\mathfrak C_{\vA,\uC}\topp \tau}
\newcommand{\hide}[1]{\comment{\tiny (hidden text)}}
\colorlet{shadecolor}{gray!10}
\newcounter{oldeq}
\newcounter{usesofarxiv}
 \newcommand{\arxiv}[1]{
\setcounter{oldeq}{\value{equation}}
 \addtocounter{usesofarxiv}{1}
 \setcounter{equation}{0}
\def\theoldeq{\theequation}
\def\theequation{x-\arabic{usesofarxiv}.\arabic{equation}}
\def\theequation{\arabic{section}.\arabic{usesofarxiv}.\arabic{equation}}
\def\theequation{\thesection.\arabic{usesofarxiv}.\arabic{equation}}
  \colorlet{shadecolor}{gray!10}
{\footnotesize
\begin{shaded}#1
\end{shaded}
   \setcounter{equation}{\value{oldeq}}
\numberwithin{equation}{section}
}\color{black}}
\tikzset{cross/.style={cross out, draw=black, fill=none, minimum size=2*(#1-\pgflinewidth), inner sep=0pt, outer sep=0pt}, cross/.default={2pt}}
 \title[Markov processes and stationary measure for the open KPZ]{
Markov processes related to the stationary measure for the open KPZ equation}
\author{W{\l}odek Bryc}
\address
{
W{\l}odzimierz Bryc\\
Department of Mathematical Sciences\\
University of Cincinnati\\
2815 Commons Way\\
Cincinnati, OH, 45221-0025, USA.
}
\email{wlodek.bryc@gmail.com}
\author{Alexey Kuznetsov}
\address
{
Alexey Kuznetsov\\
Department of Mathematics and Statistics\\
York University, 4700 Keele Street
\\ Toronto, Ontario, M3J 1P3, Canada
}
\email{akuznets@yorku.ca}
\author{Yizao Wang}
\address
{
Yizao Wang\\
Department of Mathematical Sciences\\
University of Cincinnati\\
2815 Commons Way\\
Cincinnati, OH, 45221-0025, USA.
}
\email{yizao.wang@uc.edu}
\author{Jacek Weso{\l}owski}
\address{ Jacek Weso{\l}owski\\ Faculty of Mathematics and Information Science\\
Warsaw University of Technology\\
00-662 Warszawa\\
ul. Koszykowa 75 \\
Poland}
\email{wesolo@mini.pw.edu.pl}
\subjclass[2020]{60J25;44A15;34L10}
\begin{document}\sloppy

\maketitle

\begin{abstract}
We provide a probabilistic
description of the stationary measures
 for the open KPZ on the spatial interval $[0,1]$ in terms of a Markov  process $Y$, which is a Doob's $h$ transform of the Brownian motion killed at an exponential rate. Our work builds on a recent formula of Corwin and Knizel which expresses the multipoint Laplace transform of the stationary solution of the open KPZ in terms of another Markov process $\mathbb T$: the continuous dual Hahn process with Laplace variables taking on the role of time-points in the process.  The core of our approach is to prove that the  Laplace transforms of the  finite dimensional distributions %
of $Y$ and $\mathbb T$ are equal
when the time parameters of one process become the Laplace variables of the other process and vice versa.

\end{abstract}

\arxiv{This is an expanded version of the paper with additional material.}

\section{Introduction and main results}

\subsection{Background}
  The Kardar--Parisi--Zhang (KPZ)
equation is a nonlinear stochastic PDE, proposed in \cite{kardar1986dynamic} as a model   for the evolution of the profile of a growing interface  driven by the space-time white noise $\zeta$. The interface profile  is
described by a height   function $h(t,s)$, where $t\geq 0$ is a time variable and $s$ is a spatial variable, which in one spatial dimension
 formally satisfies a singular equation
\begin{equation}
  \label{KPZ}  \partial_t h(t,s)= \tfrac12 \partial_s^2 h(t,s)+\tfrac12\left(\partial_s h(t,s)\right)^2+\zeta(t,s).
\end{equation}
(For a rigorous interpretation of this equation,  see e.g. \cite{hairer2013solving}.)
In this paper we are interested in the  open KPZ equation on the interval so that \eqref{KPZ} holds
for $0 \leq  s \leq 1$,   subject to    the
inhomogeneous Neumann boundary conditions with two arbitrary parameters.
Since \eqref{KPZ} is not well posed and its solutions are nowhere differentiable, this is only a formal concept.
  Refs. \cite{corwin2018open,gerencser2019singular,parekh2019kpz}, define the solution of this problem in  the Hopf-Cole sense, where the substitution $Z=e^{h}$
 converts \eqref{KPZ} into  the stochastic heat equation
  \begin{equation}
    \label{eq:SHE}
    \partial_t Z(t,s)=\tfrac12 \partial_s^2 Z(t,s)+Z(t,s)\zeta(t,s),\; \; t\geq 0,\; s\in[0,1].
  \end{equation}
  According to \cite[Section 1.1]{CorwinKnizel2021}, the    inhomogeneous   Neumann boundary conditions
  then correspond  to    the inhomogeneous   Robin boundary conditions

  \begin{equation}
    \label{Rbdry}
    \partial_s Z(t,s)\vert_{s=0}=\tfrac{\uC-1}{2}Z(t,0),\quad  \partial_s Z(t,s)\vert_{s=1}=\tfrac{1-\vA}{2}Z(t,1), \; t>0,
  \end{equation}
  with two
    real parameters   $\vA,\uC$.

    The rigorous handling of equation \eqref{eq:SHE} with the boundary conditions \eqref{Rbdry}    is
  technical  and it is not needed in this paper,
  as
we are interested only in the explicit stationary measures
constructed in \cite{CorwinKnizel2021}. %
 A stationary measure for the open KPZ equation
is the law of a %
random function
$( H(s))_{s\in[0,1]}$
with $ H(0)=0$, defined by the property that if %
$h(t,s)$
is a Cole-Hopf solution of equation \eqref{KPZ} with boundary values corresponding to
\eqref{Rbdry} and with the initial condition
$h(0,s) =  H(s)$ for  $s \in  [0,1]$, %
then   for all $t\geq 0$
we have the equality in distribution:
\[
(h(t,s)-h(t,0))_{s\in[0,1]} \stackrel d=(H(s))_{s\in[0,1]}.
\]
As usual, equality in distribution  means that the finite dimensional distributions are the same. (Here,
the related stochastic processes have continuous sample paths, so  equality in  distribution means that they induce the same probability measure on $C([0,1])$.)
Since     we do not need the time variable  for the stationary measure,  we will treat $H$ as a stochastic process on the finite time interval $[0,1]$,
  and
we will  use $t$ instead of $s$  for the spatial variable,   writing  $( H(t))_{t\in[0,1]}$.

 In the seminal study  \cite{CorwinKnizel2021}  Corwin and Knizel
 constructed stationary measures for the open KPZ equation with general parameters $\vA,\uC$,  and determined their Laplace transform when $\vA+\uC> 0$. (More precisely, formula \cite[(1.2)]{CorwinKnizel2021} has two  real parameters $u,v$ that enter the  inhomogeneous Neumann
  boundary conditions; in this paper we  use $\uC=2u$, $\vA=2v$.)
 Transcribed to our notation,   Corwin and Knizel \cite{CorwinKnizel2021} proved that for $0=t_0<t_1<\dots<t_d\leq t_{d+1}=1$ and $\vA+\uC>0$,
    the Laplace transform of the finite-dimensional distributions of $H$
   evaluated at  the increments of a decreasing sequence $S>s_1>s_2>\dots>s_{d}>s_{d+1}=0$, where $S=2+(\uC-2)I_{(0,2)}(\uC)$,   has the form
\begin{multline}
\label{pre-psi}
  \e\left[\exp \left(-\sum_{k=1}^d (s_{k}-s_{k+1})H(t_k)\right)\right]
  \\= \exp\left(\frac14\sum_{k=1}^{d+1}(t_k-t_{k-1})s_k^2\right) \times  \psi\topp {1/4}(\vv s,\vv t),
\end{multline}
where $\vv s=(s_1,\dots,s_d)$ and $\vv t=(t_1,\dots,t_d)$, $d\geq 1$.
To describe  $\psi\topp {1/4}(\vv s,\vv t)$, Corwin and Knizel
 \cite[Section 7.3]{CorwinKnizel2021}
 introduced  the {\em continuous dual Hahn process} %
   $(\mathbb{T}_s)_{s\in[0,S)}$, and   expressed $\psi\topp {1/4}(\vv s,\vv t)$  %
 as %
 \begin{equation}
  \label{psi}
  \psi\topp \tau(\vv s,\vv t)=\frac{1}{\Kab}\int_\r \e\left[\exp\left(-\tau \sum_{k=1}^{d+1}(t_k-t_{k-1})\mathbb{T}_{s_k}\right)\middle\vert\mathbb{T}_{s_{d+1}}=x\right]\mathfrak p_{s_{d+1}}(\d x),
\end{equation}
  where  $\tau=1/4$, constant
  \begin{equation}\label{Kab2p0}
   \Kab =\int_\r e^{-\tau x} \mathfrak p_{s_{d+1}}(\d x)
  \end{equation} is computed from a limiting procedure based on \cite{corwin2018open},   and  $\{\mathfrak p_s(\d x)\}$ is a family of $\sigma$-finite measures \eqref{their-nu} on $\r$, which serve as an entrance law \eqref{p-inv} for the process $\TT$.
  Here we introduced an auxiliary parameter $\tau>0$,
which amounts to replacing   the  interval  $[0,1]$ for the  spatial variable in \eqref{eq:SHE}
by  the interval $[0,4\tau]$.
(We shall  rely on the specific choice $\tau=1/4$ only in
 Section \ref{sec:KPZ}.)

   The family of continuous dual Hahn processes
   is   determined by two parameters $\vA,\uC\in\mathbb R, \vA+\uC>0$. The range of the time parameter $s$ is delicate and we shall discuss it in the paragraph before Theorem \ref{T1.3}.
Transition probabilities
 for $\mathbb T$
 are
 not easy to describe in general, as they may involve both
 an absolutely continuous part and a discrete part, depending on the relation between the location parameter $x$, time parameters $s<t$ and $\vA,\uC$.
 The case $s\in(-\vA,\uC)$ is easier as then,  with state space $(0,\infty)$, the Markov process  $\TT$ has transition   probability density function   \eqref{p2q0}.
For  the more general case we refer to Appendix \ref{Sec:ExtendedCDH}.

  The description    of  the law of $H$ via \eqref{pre-psi} and \eqref{psi} is of an analytical nature and our goal is to provide a probabilistic description. Our starting point is   expression \eqref{psi} for general $\tau>0$. In fact, we shall
show that as a function of $\vv s$, expression \eqref{psi} itself is the Laplace transform in argument $\vv s$,
\begin{equation}
  \label{*post-psi*}
  \psi\topp \tau(\vv s,\vv t) = \e\left[\exp\left(-\sum_{k=1}^d(s_k-s_{k+1})\widetilde Y_{t_k}\right)\right]
\end{equation}
with
\[
(\widetilde Y_t)_{t\in[0,1]} \stackrel d=(Y_0-Y_{\tau t})_{t\in[0,1]}
\]
for a Markov process $(Y_t)_{t\in[0,\tau]}$ from Theorem \ref{T1.1}. (In  general, a product of a Laplace transform with another function $\psi$ can be a Laplace transform without $\psi$ being one.)
As a consequence, taking $\tau=1/4$   we identify   the  stationary measure for the open KPZ equation:
  \begin{equation}
    \label{Y2H}
     (H(t))_{t\in[0,1]} \stackrel d=(B_{t/2}-Y_{t/4}+Y_0)_{t\in[0,1]}.
  \end{equation}
We note that although identity \eqref{Y2H} is rigorously established in Proposition \ref{Prop-KPZ} only for $\tau=1/4$ and $\vA+\uC>0$ with $\vA,\uC>-2$, there are good reasons to expect that it is valid for all $\vA+\uC>0$. %

A pair of formulas \eqref{psi} and \eqref{*post-psi*}
 is a new example of   the {\it dual representation for the Laplace transform}.
 Informally, a dual representation expresses the Laplace transform of the finite-dimensional distribution of the first process as the Laplace transform of the finite dimensional distribution of the second process  when the time parameters of one process become the Laplace variables of the other process and vice versa.
Ref. \cite{Bryc-Wang-2017} established such   dual  representations for the Laplace transforms of Markov processes which are squares of  the  radial  part  of  a  3-dimensional  Cauchy  process \cite[Corollary 1]{KyprianouOConnell2021},
and  the Laplace transforms of the Brownian excursion and the generalized meanders. Such identities were needed to identify limiting fluctuations for the open ASEP in \cite{Bryc-Wang-2017ASEP}. A different (univariate) example of such a dual representation  is %
  \cite[formula (2)]{Bertoin-Yor2001subordinators}.  The specifics differ slightly from case to case, compare   \eqref{Dual-1}, \eqref{For-proof}, and \cite[(1.2)]{Bryc-Wang-2017}.
 At this time there is no complete understanding
how such dual representations may arise in general.

\subsection{Main results}
We first introduce the Markov process $(Y_t)_{t\in[0,\tau]}$ (with parameters $\vA,\uC,\tau$)
based on the Yakubovich heat kernel
  \begin{equation}\label{p_t}
p_t(x,y)=  \int_0^{\infty} e^{-t u^2} K_{\i u}(e^x) K_{\i u}(e^y)\, \mu(\d u),\quad x,y\in\r,\; t>0,
\end{equation}
where
\begin{equation}\label{mu}
\mu(\d u):= \frac{2}{\pi}\frac{\d u}{\vert \Gamma(\i u)\vert ^2}=\frac{2}{\pi^2} u\sinh(\pi u)\, \d u,
\end{equation}
and
 \begin{equation}
  \label{K-def}
  K_{\i u}(x)=\int_0^\infty e^{-x\cosh w}\cos(u w)\, \d w
\end{equation}
is the modified Bessel function defined for $u\in\r$ and  $x>0$.

 The following result specifies the finite dimensional distributions of the process, {which is all that is needed for the dual representation of the Laplace transforms.}

\begin{theorem}\label{T1.1}
  Fix $\tau>0$ and $\vA,\uC\in\r$ such that $\vA+\uC>0$. Then there exists a Markov process $(Y_t)_{t\in [0,\tau]}$ such that
for $0=t_0<t_1<\dots<t_{d}<t_{d+1}=\tau$ and $d\geq 0$   the  finite dimensional distribution of
$(Y_{t_0},\ldots,Y_{t_{d+1}})$
has the %
 density
     \begin{equation}\label{joint-density}
   f_{\vv t}(\vv x)=\frac{1}{ C\topp \tau_{\vA,\uC}} e^{\uC x_0 + \vA x_{d+1}}\prod_{k=1}^{d+1}p_{t_k-t_{k-1}}(x_{k-1},x_{k}),
 \end{equation}
 where $\vv t=(t_0,\dots,t_{d+1})$, $\vv x=(x_0,\dots,x_{d+1})\in\r^{d+2}$ and %
  \begin{equation}
   \label{C-formula}
    C\topp \tau_{\vA,\uC}:=\int_{\r} \int_{\r} e^{\vA x+\uC y} p_\tau(x,y)\d x \d y.
 \end{equation}

\end{theorem}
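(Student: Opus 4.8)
\emph{Approach.} The plan is to show that the right-hand side of \eqref{joint-density} defines, for each admissible time vector, a bona fide probability density, and that this family is consistent in the Kolmogorov sense; existence of $(Y_t)_{t\in[0,\tau]}$ then follows from the Kolmogorov extension theorem. Two structural facts about the kernel $p_t$ drive everything: (i) the \emph{Chapman--Kolmogorov equation} $\int_\r p_s(x,z)p_t(z,y)\,\d z = p_{s+t}(x,y)$, and (ii) strict positivity $p_t(x,y)>0$. Granting these together with $C\topp\tau_{\vA,\uC}<\infty$, nonnegativity of $f_{\vv t}$ is immediate; integrating out the $d$ interior variables collapses the product via (i) to $p_\tau(x_0,x_{d+1})$, and the normalization \eqref{C-formula} --- after using the symmetry $p_\tau(x,y)=p_\tau(y,x)$ to match $e^{\uC x_0+\vA x_{d+1}}$ with the integrand of \eqref{C-formula} --- yields total mass $1$. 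Deleting an interior time $t_j$ and integrating out $x_j$ reduces, again by (i), to the density attached to the smaller time vector, which is exactly the consistency required. The Markov property becomes transparent if one rewrites \eqref{joint-density} as an $h$-transform: with $h_t(x):=\int_\r e^{\vA y}p_{\tau-t}(x,y)\,\d y$ (so $h_\tau(x)=e^{\vA x}$), the ratios $q_{s,t}(x,y)=p_{t-s}(x,y)h_t(y)/h_s(x)$ are honest transition kernels by the space--time harmonicity of $h$ inherited from (i), the $h$-factors telescope, and the initial law has density proportional to $e^{\uC x}h_0(x)$.

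\emph{Chapman--Kolmogorov and positivity.} Property (i) is the orthogonality of the Kontorovich--Lebedev transform: after the substitution $z\mapsto e^z$ one has, as a distributional identity for $u,v>0$, $\int_\r K_{\i u}(e^z)K_{\i v}(e^z)\,\d z = \frac{\pi^2}{2u\sinh(\pi u)}\,\delta(u-v)$, and since $\frac{\pi^2}{2u\sinh(\pi u)}\,\mu(\d u)=\d u$ by \eqref{mu}, inserting this into the product $p_s\cdot p_t$ and integrating over $z$ reproduces \eqref{p_t} at time $s+t$. For (ii) I would identify \eqref{p_t} as the spectral expansion of the Schr\"odinger semigroup generated by $A=\tfrac{\d^2}{\d x^2}-e^{2x}$: the modified Bessel equation shows that $x\mapsto K_{\i u}(e^x)$ solves $A\,K_{\i u}(e^{\cdot})=-u^2K_{\i u}(e^{\cdot})$, so \eqref{p_t} is the heat kernel of $e^{tA}$. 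The Feynman--Kac formula then gives $p_t(x,y)=g_t(x,y)\,\e_{x,y}^{t}\!\left[\exp\left(-\int_0^t e^{2\beta_s}\,\d s\right)\right]$, where $g_t(x,y)=(4\pi t)^{-1/2}e^{-(x-y)^2/(4t)}$ is the free kernel and $\beta$ the associated bridge; this makes positivity manifest and, crucially, furnishes the pointwise bound $p_t(x,y)\le g_t(x,y)$.

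\emph{Finiteness of the normalizer (main obstacle).} The delicate point, and the only place the hypothesis $\vA+\uC>0$ enters, is $C\topp\tau_{\vA,\uC}<\infty$. Writing the inner $y$-integral via Feynman--Kac gives $\int_\r e^{\uC y}p_\tau(x,y)\,\d y=\e_x\!\left[\exp\left(\uC\beta_\tau-\int_0^\tau e^{2\beta_s}\,\d s\right)\right]=:\Phi(x)$, so that $C\topp\tau_{\vA,\uC}=\int_\r e^{\vA x}\Phi(x)\,\d x$. Two regimes must be controlled. As $x\to-\infty$ the killing term is negligible, $\Phi(x)\asymp e^{\uC x}$, and the integrand behaves like $e^{(\vA+\uC)x}$, which is integrable \emph{precisely} because $\vA+\uC>0$; this is the sharp constraint. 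As $x\to+\infty$ the potential forces $\int_0^\tau e^{2\beta_s}\,\d s$ to be of order $e^{2x}$, so $\Phi(x)$ decays super-exponentially and beats $e^{\vA x}$ for every $\vA$. The same two-sided analysis, now with $x$ fixed and only the Gaussian tail of $g_{\tau-t}(x,\cdot)$ in play, shows $0<h_t(x)<\infty$ for all $\vA$ and all $x$, so no extra hypothesis is needed for the $h$-transform to be well defined. Turning these heuristics into rigorous bounds --- a quantitative lower bound on $\int_0^\tau e^{2\beta_s}\,\d s$ for large starting point, uniform enough to integrate against $e^{\vA x}$ --- is the technical heart of the argument.

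\emph{Conclusion.} With (i), (ii), and $C\topp\tau_{\vA,\uC}<\infty$ in hand, the family $\{f_{\vv t}\}$ consists of probability densities satisfying the Kolmogorov consistency relations described above; extending to arbitrary finite time sets by marginalizing the endpoints, the Kolmogorov extension theorem produces a process $(Y_t)_{t\in[0,\tau]}$ whose finite-dimensional distributions are given by \eqref{joint-density}, and the $h$-transform representation certifies that it is Markov.
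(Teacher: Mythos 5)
Your overall construction is the same as the paper's: Doob $h$-transform of the sub-Markov semigroup with kernel $p_t$, with the harmonic function $h_t(x)=\int_\r e^{\vA y}p_{\tau-t}(x,y)\,\d y$ absorbing one exponential weight and the initial law absorbing the other. (The paper builds the process with the $e^{\uC y}$-weight as the $h$-function, assuming WLOG $\uC>0$ so that the Mellin formula $\int_\r e^{\uC x}K_{\i u}(e^x)\d x=2^{\uC-2}\vert\Gamma(\tfrac{\uC+\i u}{2})\vert^2$ applies, and then recovers $Y$ by time reversal using the symmetry of $p_t$; you avoid the reversal by putting the $h$-function at the $\vA$-end and justifying its finiteness for arbitrary sign of $\vA$ via the Gaussian domination $p_t\le g_t$ from Feynman--Kac. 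That part is fine.) The properties of $p_t$ you invoke --- positivity, symmetry, Chapman--Kolmogorov --- are exactly the paper's Theorem \ref{T-YS}, though your derivation of Chapman--Kolmogorov from the distributional orthogonality $\int_\r K_{\i u}(e^z)K_{\i v}(e^z)\d z\propto\delta(u-v)$ is only formal as written; the paper gets this from the $L_2$-isometry of the Kontorovich--Lebedev transform together with the identification of $\widetilde\pp_t$ as the semigroup of killed Brownian motion.

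The genuine gap is the finiteness of $C\topp\tau_{\vA,\uC}$, which you correctly identify as the only place $\vA+\uC>0$ enters and then do not prove: the statements ``$\Phi(x)\asymp e^{\uC x}$ as $x\to-\infty$'' and ``$\Phi(x)$ decays super-exponentially as $x\to+\infty$, uniformly enough to integrate against $e^{\vA x}$'' are exactly the content of the theorem in this regime, and you explicitly defer them (``the technical heart of the argument''). Without them the normalization in \eqref{joint-density} is not known to be finite and the construction does not get off the ground. The heuristics are in fact correct and your Feynman--Kac route can be completed (e.g.\ the lower regime via $\Phi(x)\le\e_x[e^{\uC\beta_\tau}]=e^{\uC x+\uC^2\tau}$, and the upper regime by splitting on whether $\inf_{s\le\tau/2}\beta_s\ge x/2$, which yields either a factor $e^{-\frac{\tau}{2}e^{x}}$ or a Gaussian factor $e^{-cx^2/\tau}$), but these estimates must actually be carried out. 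The paper takes an entirely different, complex-analytic route here: it first evaluates $C\topp\tau_{\vA,\uC}$ in closed form for $\vA,\uC>0$ as a Gamma-function integral (formula \eqref{C-X}), and then proves finiteness on all of $\{\vA+\uC>0\}$ by analytically continuing that integral in $(\vA,\uC)$ through repeated contour shifts collecting residues (Theorem \ref{thm1}), concluding via the Lukacs--Szasz criterion that a Laplace transform is finite on a real interval iff it extends analytically to the corresponding half-plane. The analytic route has the added payoff of the explicit expression \eqref{C2K} for the constant, which the paper needs later to match $C\topp\tau_{\vA,\uC}$ with the Corwin--Knizel normalization $\Kab$; your probabilistic route, once made rigorous, would prove existence of $Y$ but not that identity.
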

The proof of  Theorem \ref{T1.1} consists of the construction
of the Markov process, and includes verification that  $C\topp \tau_{\vA,\uC}<\infty$.

Now, we present the dual representation for the Laplace transform for the processes $\TT$ and $Y$ when the process  $\TT$ is restricted to $s\in(-\vA,\uC)$. This simplifies calculations as  then transition probabilities for $\TT$ are absolutely continuous with probability density function
\begin{equation}
  \label{p2q0}
  \mathfrak p_{s,t}(x,\d y) %
 = \frac{\vert \Gamma(\frac{\uC-t+\i \sqrt{y}}{2},\frac{t-s+\i (\sqrt{x}+\sqrt{y})}{2},\frac{t-s+\i (\sqrt{x}-\sqrt{y})}{2})\vert ^2}{8\pi\sqrt{y}\Gamma(t-s)\vert \Gamma(\frac{\uC-s+\i \sqrt{x}}{2},\i \sqrt{y})\vert ^2}1_{\{y>0\}}\d y,\;\;
 s<t,\; x>0.
\end{equation}
Here and throughout  the paper we use  standard notation for the products of Gamma functions:
$$\Gamma(a_1,a_2,\dots,a_k)= \Gamma(a_1)\Gamma(a_2)\cdots\Gamma(a_k).$$
\begin{theorem}\label{T1.2} For $\vA+\uC>0$,  $d= 0,1, \dots$,
$\uC>s_1>\dots>s_d>s_{d+1}>-\vA$,
and $t_0=0<t_1<\dots<t_d<t_{d+1}=\tau$, %
\begin{multline}\label{Dual-1}
   \int_0^\infty \e\left[\exp\left(-\sum_{k=1}^{d+1}(t_k-t_{k-1})\TT_{s_k}\right)\middle\vert \TT_{s_{d+1}}=x\right]
  \widetilde {\mathfrak p}_{s_{d+1}}(x)\d x
   \\ =
   \e\left[ \exp\left(\sum_{k=1}^{d+1}  s_{k} (Y_{ t_{k}}-Y_{t_{k-1}}) \right)\right],
\end{multline}
where  $(Y_t)_{t\in[0,\tau]}$ is the Markov process from Theorem \ref{T1.1} and for $-\vA<s<\uC$,
\begin{equation}
  \label{tilde-mathfrak-p}\widetilde {\mathfrak p}_s(x)=\frac{2^{\vA+\uC}}{ 16\pi C\topp \tau_{\vA,\uC}}\frac{\vert \Gamma(\tfrac{\vA+s+\i \sqrt{x}}{2},\tfrac{\uC-s+\i \sqrt{x}}{2})\vert ^2}{\sqrt{x}\vert \Gamma(\i \sqrt{x})\vert ^2},\; x>0.
\end{equation}
\end{theorem}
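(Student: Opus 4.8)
The plan is to write both sides of \eqref{Dual-1} as explicit multiple integrals and to show that each collapses, after a single change of variables, onto one and the same integral over $(u_1,\dots,u_{d+1})\in(0,\infty)^{d+1}$. The only analytic input is the classical Mellin-type evaluation of a product of Macdonald functions,
\begin{equation*}
\int_\r e^{\sigma x}K_{\i u}(e^x)K_{\i v}(e^x)\,\d x=\frac{2^{\sigma-3}}{\Gamma(\sigma)}\Bigl|\Gamma\Bigl(\tfrac{\sigma+\i(u+v)}2,\tfrac{\sigma+\i(u-v)}2\Bigr)\Bigr|^2,\qquad \sigma>0,
\end{equation*}
together with its one-factor analogue $\int_\r e^{\sigma x}K_{\i u}(e^x)\,\d x=2^{\sigma-2}\,|\Gamma(\tfrac{\sigma+\i u}{2})|^2$. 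The point of these formulas is that they convert products of Bessel functions into exactly the ratios of Gamma functions occurring in \eqref{p2q0} and \eqref{tilde-mathfrak-p}; the parameter $z=u^2$ of the continuous dual Hahn process will play the role of the spectral variable of the Yakubovich kernel \eqref{p_t}.

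On the right-hand side I would insert the density \eqref{joint-density} of the process $Y$ from Theorem \ref{T1.1} and observe that the exponent telescopes, $\sum_{k=1}^{d+1}s_k(x_k-x_{k-1})=-s_1x_0+\sum_{j=1}^d(s_j-s_{j+1})x_j+s_{d+1}x_{d+1}$, so that combined with $e^{\uC x_0+\vA x_{d+1}}$ the integrand carries the weights $e^{(\uC-s_1)x_0}$, $e^{(s_j-s_{j+1})x_j}$ and $e^{(\vA+s_{d+1})x_{d+1}}$. Expanding each factor $p_{t_k-t_{k-1}}$ via \eqref{p_t} introduces variables $u_k$, Gaussians $e^{-(t_k-t_{k-1})u_k^2}$ and weights $\mu(\d u_k)$; each interior $x_j$ then sits inside precisely two Bessel factors $K_{\i u_j}(e^{x_j})K_{\i u_{j+1}}(e^{x_j})$, while the endpoints $x_0,x_{d+1}$ sit inside one each. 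Integrating out all of $x_0,\dots,x_{d+1}$ by the two formulas above leaves an integral over $u_1,\dots,u_{d+1}$ whose integrand is a product of the Gamma factors $|\Gamma(\tfrac{\uC-s_1+\i u_1}2)|^2$, $|\Gamma(\tfrac{\vA+s_{d+1}+\i u_{d+1}}2)|^2$, and $\prod_{j=1}^d|\Gamma(\tfrac{(s_j-s_{j+1})+\i(u_j+u_{j+1})}2,\tfrac{(s_j-s_{j+1})+\i(u_j-u_{j+1})}2)|^2$, against $\prod_k e^{-(t_k-t_{k-1})u_k^2}\mu(\d u_k)$.

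For the left-hand side I would write the conditional expectation through the transition densities \eqref{p2q0}, noting that, since $s_{d+1}<s_d<\dots<s_1$, the process runs forward from the conditioning index $s_{d+1}$ via the kernels $\mathfrak p_{s_{k+1},s_k}$; I then multiply by \eqref{tilde-mathfrak-p} and substitute $z_k=u_k^2$. The killing exponential becomes $e^{-(t_k-t_{k-1})u_k^2}$, matching the Gaussians; the symmetric Gamma factors of \eqref{p2q0} reproduce the interior factors above (using $|\Gamma(\cdots+\i(u_j-u_{j+1})/2)|^2=|\Gamma(\cdots+\i(u_{j+1}-u_j)/2)|^2$); the ratios $|\Gamma(\tfrac{\uC-s_k+\i u_k}2)|^2/|\Gamma(\tfrac{\uC-s_{k+1}+\i u_{k+1}}2)|^2$ telescope, the surviving denominator cancelling the factor $|\Gamma(\tfrac{\uC-s_{d+1}+\i u_{d+1}}2)|^2$ supplied by \eqref{tilde-mathfrak-p}, while the surviving numerator and the remaining Gamma of \eqref{tilde-mathfrak-p} match the two endpoint contributions on the right. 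The Jacobian $2u_k\,\d u_k$ turns each $1/(u_k|\Gamma(\i u_k)|^2)$ into $\mu(\d u_k)$ (cf.\ \eqref{mu}). The constant $C\topp\tau_{\vA,\uC}$ appears once on each side and cancels, and the powers of $2$ agree because $(\uC-s_1-2)+(\vA+s_{d+1}-2)+\sum_{j=1}^d[(s_j-s_{j+1})-3]$ telescopes to $\vA+\uC-4-3d$, the same exponent produced on the left by the prefactor $2^{\vA+\uC}/16$ together with the $d$ factors $1/8$ from \eqref{p2q0}.

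The main obstacle is justifying the interchange of integrations, i.e.\ Fubini: the functions $K_{\i u}$ are real but sign-changing, so absolute integrability must be secured before exchanging the $x$- and $u$-integrals. I would use the elementary bound $|K_{\i u}(r)|\le K_0(r)$, immediate from \eqref{K-def}, to dominate the $u$-dependence of the Bessel factors while retaining the Gaussians; then $\int_0^\infty e^{-(t_k-t_{k-1})u^2}\mu(\d u)<\infty$ controls each $u_k$-integral, and each spatial integral is dominated by $\int_\r e^{\sigma x}K_0(e^x)^2\,\d x$, which is finite exactly when $\sigma>0$. These positivity requirements read $\uC-s_1>0$, $s_j-s_{j+1}>0$ and $\vA+s_{d+1}>0$, that is, precisely the hypotheses $\uC>s_1>\dots>s_{d+1}>-\vA$. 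Thus the admissible parameter range is dictated by convergence, and once Fubini is in place the identity follows from the term-by-term matching described above.
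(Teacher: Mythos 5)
Your proposal is correct and follows essentially the same route as the paper's proof in Section~\ref{Sect:SecondProof}: both arguments reduce \eqref{Dual-1} to an identity for the process $Z_s=\sqrt{\TT_s}$ via the substitution $u^2=x$, convert the Gamma-function kernels into Bessel integrals using \eqref{K-Melin} and \eqref{K-Mellin2}, justify Fubini by the bound $\vert K_{\i u}(e^x)\vert\le K_0(e^x)$ together with the integrability of $e^{\eps x}K_0(e^x)$ and $e^{\eps x}K_0^2(e^x)$, and observe that the positivity constraints $\uC>s_1>\dots>s_{d+1}>-\vA$ are exactly what convergence requires. The only difference is organizational (you expand both sides and match the resulting $u$-integrals, while the paper transforms the left side directly into the right), which does not change the substance.
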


The left hand side of \eqref{Dual-1} is similar to the expression on the right hand side of \eqref{psi}, but \eqref{Dual-1} does not lead to    \eqref{*post-psi*} immediately. Firstly,
we have $s_{d+1}=0$ in \eqref{psi}, while $s_{d+1}$ is not necessarily zero in \eqref{Dual-1}: in fact, $s_{d+1}=0$ is not allowed in \eqref{Dual-1} if $\vA\le 0$.
 Secondly,  for technical reasons, the range of the time parameter in \eqref{psi}  is restricted in \cite{CorwinKnizel2021} to $s\in[0,S)$ only.
 Notice that  the process $\mathbb T$ over $[0,S)$ may have atoms in its law (see \citep{CorwinKnizel2021} for a detailed presentation).
Moreover, the intervals $[0,S)$ and $(-\vA,\uC)$ may be disjoint. Nevertheless, when $\uC>0$ (so $S = \min\{\uC,2\}$),  the continuous dual Hahn process $\TT$ can be actually defined over a larger domain $(-\infty,\uC)\supset[0,S)$, as shown recently in \cite{Bryc-2021} and summarized in Theorem \ref{TH:CDH}.
 Thirdly, we need to
relate the normalizing constants \eqref{Kab2p0}  and \eqref{C-formula}.

These issues are addressed in our next result.

\begin{theorem}\label{T1.3}
Fix $\tau>0$,  $d\geq 1$,  and real $\vA,\uC$ such that  $\uC>\max\{-\vA,0\}$. Let $t_0=0<t_1<\dots<t_d=1$
and $\uC>s_1>\dots>s_{d}>\max\{-\vA,0\}$, $s_{d+1} =0$. Then
\begin{multline}\label{psi-dual}
   \frac{1}{\Kab}\int_\r \e\left[\exp\left(-\tau \sum_{k=1}^{d}(t_k-t_{k-1})\mathbb{T}_{s_k}\right)\middle\vert \mathbb{T}_{0}=x\right]\mathfrak p_{0}(\d x)
   \\=
  \e\left[ \exp\left(\sum_{k=1}^{d}  -(s_{k}-s_{k+1}) (Y_0- Y_{ \tau t_{k}})  \right)\right],
\end{multline}
where $\Kab$ is given by \eqref{Kab2p0}, measure $\mathfrak p_{0}(\d x)$ is defined in
\eqref{their-nu},  process
$(\TT_s)_{s\in(-\infty,\uC)}$ is the continuous dual Hahn process, see Definition \ref{Def.C1}, and $(Y_t)_{t\in[0,\tau]}$ is the Markov process introduced in  Theorem \ref{T1.1}.  \end{theorem}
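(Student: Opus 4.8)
The plan is to derive Theorem~\ref{T1.3} from the dual representation \eqref{Dual-1} of Theorem~\ref{T1.2}. The two identities have the same shape; the only essential difference is where the process $\TT$ is pinned. In \eqref{Dual-1} the conditioning sits at an interior time in $(-\vA,\uC)$ and is paired with the density $\widetilde{\mathfrak p}$, whereas in \eqref{psi-dual} it sits at $s_{d+1}=0$ and is paired with the $\sigma$-finite entrance law $\mathfrak p_0$ from \eqref{their-nu} -- a point that need not lie in $(-\vA,\uC)$ when $\vA\le 0$. The idea is to use the Markov property of $\TT$ together with the entrance-law relation \eqref{p-inv} to slide the pinning point from $0$ forward to the smallest active time $s_d$, which does lie in $(-\vA,\uC)$ since $s_d>\max\{-\vA,0\}$. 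Once the pinning is at $s_d$, the left-hand side of \eqref{psi-dual} becomes an instance of the left-hand side of \eqref{Dual-1}, and a summation by parts turns the resulting $Y$-functional into the right-hand side of \eqref{psi-dual}.

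In detail, set $g(y)=\exp(-\tau(t_d-t_{d-1})y)$ and $h=\exp(-\tau\sum_{k=1}^{d-1}(t_k-t_{k-1})\TT_{s_k})$, so the integrand on the left of \eqref{psi-dual} factors as $g(\TT_{s_d})\,h$. First I would peel off the smallest active time by the Markov property,
\[
\e\left[g(\TT_{s_d})\,h\,\middle\vert\,\TT_0=x\right]=\int \mathfrak p_{0,s_d}(x,\d y)\,g(y)\,\e\left[h\,\middle\vert\,\TT_{s_d}=y\right],
\]
integrate against $\mathfrak p_0(\d x)$, and apply the entrance-law identity $\int_\r\mathfrak p_0(\d x)\,\mathfrak p_{0,s_d}(x,\d y)=\mathfrak p_{s_d}(\d y)$ to collapse the $x$-integral; the left side of \eqref{psi-dual} then equals
\[
\frac{1}{\Kab}\int \mathfrak p_{s_d}(\d y)\,g(y)\,\e\left[h\,\middle\vert\,\TT_{s_d}=y\right].
\]
Because $s_d\in(-\vA,\uC)$, the entrance-law marginal $\mathfrak p_{s_d}$ is absolutely continuous, and the key measure identity $\mathfrak p_{s_d}(\d y)=\Kab\,\widetilde{\mathfrak p}_{s_d}(y)\,\d y$ (the obstacle below) cancels the prefactor $1/\Kab$ and converts this into the left-hand side of \eqref{Dual-1} taken with $d-1$ in place of $d$, conditioning time $s_d$, and rescaled mesh $\tau t_0<\dots<\tau t_d=\tau$. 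Theorem~\ref{T1.2} equates it with $\e[\exp(\sum_{k=1}^d s_k(Y_{\tau t_k}-Y_{\tau t_{k-1}}))]$, and an Abel summation using $s_{d+1}=0$ and $Y_{\tau t_d}=Y_\tau$ rewrites the exponent as $-\sum_{k=1}^d(s_k-s_{k+1})(Y_0-Y_{\tau t_k})$, the right-hand side of \eqref{psi-dual}.

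The main obstacle is the measure identity $\mathfrak p_s(\d y)=\Kab\,\widetilde{\mathfrak p}_s(y)\,\d y$ on $(0,\infty)$ for $s\in(-\vA,\uC)$: matching the entrance law \eqref{their-nu} against the density \eqref{tilde-mathfrak-p} and, above all, showing that the proportionality constant is exactly $\Kab$ from \eqref{Kab2p0}. For such $s$ both measures are supported on $(0,\infty)$ with no atoms and carry the same ratio of $\Gamma$-factors, so they agree up to an $s$-independent constant; the content is to reconcile the normalizations \eqref{Kab2p0} and \eqref{C-formula} and see that this constant equals $\Kab$. When $\vA>0$ this is immediate, since the $d=0$ case of Theorem~\ref{T1.2} gives $\int_0^\infty e^{-\tau x}\widetilde{\mathfrak p}_0(x)\,\d x=1$, whence $\Kab=\int_\r e^{-\tau x}\mathfrak p_0(\d x)$ fixes the constant at $\Kab$. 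When $\vA\le 0$ the point $s=0$ leaves the continuity region and $\mathfrak p_0$ acquires atoms, so the constant must instead be pinned through the extended continuous dual Hahn process of Theorem~\ref{TH:CDH}, for instance by analytic continuation in $\vA$ on $\vA+\uC>0$ of both $\Kab$ and the ratio $\mathfrak p_s/\widetilde{\mathfrak p}_s$. A related technical point is to confirm that the entrance-law manipulation in the first display remains valid when, for $\vA\le0$, the intermediate kernel $\mathfrak p_{0,s_d}$ is the genuinely atomic extended transition rather than the simple density \eqref{p2q0}.
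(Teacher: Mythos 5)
Your argument has the same skeleton as the paper's proof: use the Markov property of $\TT$ together with the entrance-law relation \eqref{p-inv} to slide the pinning from $0$ to $s_d\in(-\vA,\uC)$, identify the resulting expression with the left-hand side of \eqref{Dual-1} (with $d-1$ in place of $d$), invoke Theorem \ref{T1.2}, and finish with Abel summation using $s_{d+1}=0$. Your worry about the atomic extended kernel $\mathfrak p_{0,s_d}$ when $\vA\le 0$ is resolved exactly as you suspect: \eqref{p-inv} is quoted from Theorem \ref{TH:CDH}(i) for the full range $-\infty<s<t\le\uC$, so no extra work is needed there. Note also that the $s$-independence of the ratio of the absolutely continuous densities in \eqref{their-nu} and \eqref{tilde-mathfrak-p} is immediate by inspection — both carry the identical product of Gamma factors — so the entire content of your ``main obstacle'' is the single constant identity \eqref{K2K}.

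On that constant: for $\vA>0$ your derivation via the $d=0$ case of Theorem \ref{T1.2} at $s_1=0$ (giving $\int_0^\infty e^{-\tau x}\widetilde{\mathfrak p}_0(x)\,\d x=1$) is a genuinely slicker route than the paper's, which instead computes $\int e^{-\tau x}\mathfrak p_0(\d x)$ explicitly and matches it term by term against the formula \eqref{C2K} for $C^{(\tau)}_{\vA,\uC}$. For $\vA\le 0$, however, your appeal to ``analytic continuation in $\vA$'' is where the real work is hiding, and it is not carried out. The measure $\mathfrak p_0$ is \emph{defined} by the explicit formula \eqref{their-nu} with atoms, so one must verify that the atomic contribution to $\int e^{-\tau x}\mathfrak p_0(\d x)$ is exactly the residue sum $\DabLa$ that appears when $C^{(\tau)}_{\vA,\uC}$ is continued past $\vA=0,-2,\dots$; this is the content of Theorem \ref{Thm-L-Const} and the contour-shifting argument of Theorem \ref{thm1}, and it occupies a substantial portion of the paper. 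Asserting that the explicit atomic expression \emph{is} the analytic continuation of its $\vA>0$ restriction is precisely the claim to be proved, not a tool one can invoke; your proposal names the right strategy but leaves this step as a gap.
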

We emphasize again that, although the left-hand side of \eqref{psi-dual} has the identical expression as $\psi\topp\tau(\vv s,\vv t)$ in \eqref{psi}, the admissible range of time parameters $\vv s$ in both formulas is not the same.
 So when we apply Theorem \ref{T1.3}   in the proof of Proposition \ref{Prop-KPZ}, we need to add an assumption that $\vA>-2$  to ensure that $[0,S)\cap ( -\vA,\uC)\ne\emptyset$.

\subsection{Application to  stationary measure for the open KPZ equation} \label{sec:KPZ}
Recall that  we denoted by $(H(t))_{t\in[0,1]}$  a process with the
stationary measure
for the open KPZ equation as its law.  In \cite[(1.12)]{CorwinKnizel2021} this process was denoted by $(H_{u,v}(x))_{x\in[0,1]}$  with $u=\uC/2$, $v=\vA/2$.
We have the following description of the law of $H$.

\begin{proposition}\label{Prop-KPZ}
Let $(Y_t)_{t\in[0,1/4]}$ be the Markov process  from Theorem \ref{T1.1} with parameter
 $\tau=1/4$, and let $(B_t)_{t\geq 0}$ be  an independent
Brownian motion. If $\vA+\uC>0$ with $\min\{\vA,\uC\}>-2$, then
 \eqref{Y2H} holds.

\end{proposition}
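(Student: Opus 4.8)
The plan is to show that the multipoint Laplace transforms of the two processes in \eqref{Y2H} agree, and then to promote this to equality of finite-dimensional distributions by uniqueness (analyticity) of the Laplace transform. Fix $0<t_1<\dots<t_d=1$ together with $s_1>\dots>s_d>s_{d+1}=0$, and set $\la_k=s_k-s_{k+1}>0$, so that $s_k=\sum_{j\ge k}\la_j$ and $s_d=\la_d$. First I would evaluate the joint Laplace transform of the right-hand side of \eqref{Y2H} at $(\la_1,\dots,\la_d)$. Because $B$ and $Y$ are independent, it factors as a Gaussian term times a $Y$-term. By Abel summation and $s_{d+1}=0$, the relevant linear functional of the Brownian motion is $\sum_{k=1}^d\la_k B_{t_k/2}=\sum_{k=1}^d s_k\bigl(B_{t_k/2}-B_{t_{k-1}/2}\bigr)$, a sum of independent centered Gaussians of variances $(t_k-t_{k-1})/2$, so the Gaussian term equals $\exp\bigl(\tfrac14\sum_{k=1}^d(t_k-t_{k-1})s_k^2\bigr)$. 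This is exactly the prefactor in the Corwin--Knizel formula \eqref{pre-psi} (its $k=d+1$ summand vanishes since $s_{d+1}=0$). The $Y$-term is $\e\exp\bigl(-\sum_{k=1}^d\la_k(Y_0-Y_{\tau t_k})\bigr)$ with $\tau=1/4$, which is precisely the right-hand side of \eqref{psi-dual}.

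Next I would invoke Theorem \ref{T1.3} to rewrite this $Y$-term as the left-hand side of \eqref{psi-dual}, which is the same integral expression as $\psi\topp{1/4}(\vv s,\vv t)$ in \eqref{psi}. Multiplying by the Gaussian prefactor and comparing with \eqref{pre-psi}, I obtain that the multipoint Laplace transform of $(B_{t/2}-Y_{t/4}+Y_0)$ equals that of $H$ for all admissible $\vv s$. Since both sides are Laplace transforms of probability laws on $\r^d$ agreeing on a non-empty open set of the variables $(\la_1,\dots,\la_d)$, analytic continuation of the (bilateral) Laplace transform into the complex tube and restriction to the imaginary directions force the two laws to coincide; as $t_1,\dots,t_d$ were arbitrary, \eqref{Y2H} follows.

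The hard part is reconciling the admissible ranges of the parameters $\vv s$, which is the only place the extra hypotheses enter. Formula \eqref{pre-psi} holds for $S>s_1>\dots>s_d>s_{d+1}=0$, while Theorem \ref{T1.3} requires $\uC>s_1>\dots>s_d>\max\{-\vA,0\}$ and $\uC>\max\{-\vA,0\}$; the two ranges overlap in $(\max\{-\vA,0\},S)$, which is non-empty exactly when $\vA>-2$ (using $S=\min\{\uC,2\}$ and $\vA+\uC>0$), as recorded after Theorem \ref{T1.3}. This overlap supplies the open set of $\vv s$ needed for the continuation argument above. Because Theorem \ref{T1.3} needs $\uC>0$ whereas the symmetric hypotheses $\vA+\uC>0$, $\min\{\vA,\uC\}>-2$ permit $\uC\le 0$, I would first reduce to $\uC>0$: the heat kernel $p_t$ is symmetric, so time reversal $t\mapsto\tau-t$ identifies $(Y_t)$ with parameters $(\vA,\uC)$ and $(Y_{\tau-t})$ with parameters $(\uC,\vA)$; combined with the reflection symmetry $t\mapsto 1-t$ of the open KPZ stationary measure (which also swaps $\vA\leftrightarrow\uC$), this lets me assume without loss of generality that $\uC=\max\{\vA,\uC\}>0$, with the smaller parameter $\vA>-2$ guaranteed by $\min\{\vA,\uC\}>-2$. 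I expect this range bookkeeping, rather than the Laplace-transform identity itself, to be the main obstacle.
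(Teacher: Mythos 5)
Your proposal is correct and follows essentially the same route as the paper: the same chain of identities linking \eqref{pre-psi}, \eqref{psi} and \eqref{psi-dual} on the overlap interval $(\max\{-\vA,0\},S)$, the same observation that this overlap is non-empty precisely when $\vA>-2$, the same appeal to uniqueness of the Laplace transform on an open set of the increments $(s_k-s_{k+1})$, and the same time-reversal/parameter-swap symmetry to reduce to $\uC>0$. The only cosmetic difference is that you start from the right-hand side of \eqref{Y2H} and work toward $H$, whereas the paper runs the chain in the opposite direction.
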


\begin{proof}
   We first consider  the case $\uC>0$.  Then $S = \min\{\uC,2\}$.
   We  apply Theorem \ref{T1.3} with the sequence $s_1>\dots>s_d$ from the  interval $(\max\{-\vA,0\},S)$, which by assumption  is non-empty,
and with $\tau=1/4$.
On this interval, \eqref{pre-psi},  \eqref{psi} and \eqref{psi-dual} hold.
It is readily checked that the first factor on the right-hand side of \eqref{pre-psi} corresponds to the Laplace transform of the increments of the scaled Brownian motion as desired.
Since  %
 $s_1>\dots>s_d>s_{d+1}:=0$,
 taking $t_d=t_{d+1}=1$ in \eqref{pre-psi} and  \eqref{psi},
  we get
\begin{multline}\label{H2Y}
    \e\left[e^{ -\sum_{k=1}^d (s_{k}-s_{k+1})H(t_k)}\right]  \stackrel{\eqref{pre-psi}}{=}
       \e\Big[e^{-\sum_{k=1}^{d} s_k(B_{t_k/2}-B_{t_{k-1}/2})}\Big]\psi\topp{1/4}(\vv s,  \vv t)
 \\\stackrel{\eqref{psi}}{=} \e\Big[e^{-\sum_{k=1}^{d} (s_k-s_{k+1})B_{t_k/2}}\Big]\\ \times\frac{1}{\mathfrak{K}_{\vA,\uC}\topp {1/4}}\int_\r \e\left[\exp\left(-\tfrac14 \sum_{k=1}^{d}(t_k-t_{k-1})\mathbb{T}_{s_k}\right)\middle\vert \mathbb{T}_{0}=x\right]\mathfrak p_{0}(\d x)
 \\
  \stackrel{\eqref{psi-dual}}{=}
   \e\Big[ e^{  \sum_{k=1}^{d}  -(s_{k}-s_{k+1}) (B_{t_k/2} -Y_{t_{k}/4}+Y_{0})} \Big].
\end{multline}
\arxiv{Here is the identity \eqref{H2Y}  with the intermediate steps:
\begin{multline}\label{H2Y*}
    \e\left[e^{ -\sum_{k=1}^d (s_{k}-s_{k+1})H(t_k)}\right]  \stackrel{\eqref{pre-psi}}{=}
       \e\Big[e^{-\sum_{k=1}^{d} s_k(B_{t_k/2}-B_{t_{k-1}/2})}\Big]\psi\topp{1/4}(\vv s,  \vv t)
 \\=\e\Big[e^{-\sum_{k=1}^{d} (s_k-s_{k+1})B_{t_k/2}}\Big] \psi\topp{1/4}(\vv s,  \vv t)
 \\\stackrel{\eqref{psi}}{=} \e\Big[e^{-\sum_{k=1}^{d} (s_k-s_{k+1})B_{t_k/2}}\Big]\frac{1}{\mathfrak{K}_{\vA,\uC}\topp {1/4}}\int_\r \e\left[\exp\left(-\tau \sum_{k=1}^{d+1}(t_k-t_{k-1})\mathbb{T}_{s_k}\right)\middle\vert \mathbb{T}_{s_{d+1}}=x\right]\mathfrak p_{s_{d+1}}(\d x)\\
 \\= \e\Big[e^{-\sum_{k=1}^{d} (s_k-s_{k+1})B_{t_k/2}}\Big]\frac{1}{\mathfrak{K}_{\vA,\uC}\topp {1/4}}\int_\r \e\left[\exp\left(-\tau \sum_{k=1}^{d}(t_k-t_{k-1})\mathbb{T}_{s_k}\right)\middle\vert \mathbb{T}_{0}=x\right]\mathfrak p_{0}(\d x)
 \\
  \stackrel{\eqref{psi-dual}}{=}
   \e\Big[ e^{  \sum_{k=1}^{d}  -(s_{k}-s_{k+1}) (B_{t_k/2} -Y_{t_{k}/4}+Y_{0})} \Big].
\end{multline}
}
Using the fact that the $d$-tuples $(s_1-s_2,s_2-s_3,\dots,s_{d-1}-s_{d},s_d)$  can be selected arbitrarily from an open subset of $\r^d$,  the first and last expressions in \eqref{H2Y}   identify  the laws  by  uniqueness of  the Laplace transform \cite[Theorem 2.1]{farrell2006techniques}. This proves  \eqref{Y2H} %
when $\uC>0$.

By symmetry, this identification  extends to the case when $\uC$ is negative with $\uC>-2$. For this argument we need to indicate
explicitly how the processes depend on parameters  $\vA,\uC$,
so
by $Y\topp{{\vA,\uC}}$ we denote the process with joint density \eqref{joint-density}
 with $\tau=1/4$,
and by $H\topp{{\vA,\uC}}$ we denote the process $(H_{\uC/2,\vA/2}(x))_{x\in[0,1]}$ in \cite[(1.12)]{CorwinKnizel2021}.

The argument is based on the fact that time reversal exchanges the roles of parameters $\vA,\uC$.  From \eqref{joint-density}  and \eqref{p_t} (recall that $\tau=1/4$) it follows that  process $(Y\topp{{\vA,\uC}}_{(1-t)/4})$ has the same law as the process $(Y\topp{\uC,\vA}_{t/4})$,
and
   by \cite[Theorem 1.4 (4)]{CorwinKnizel2021},  process $(H\topp{\vA,\uC}(1-t)-H\topp{\vA,\uC}(1))_{t\in[0,1]}$ has the same law as
   $(H\topp{\uC,\vA}(t))_{t\in[0,1]}$.
  So if $\uC$ is negative, i.e., $\vA>0$,   applying the already established result to the time reversals, we see that  process
  $(H\topp{{\uC,\vA}}(t))_{t\in[0,1]}$ has the same law as the process $(\widetilde H(t))_{t\in[0,1]}$ with $\widetilde H(t)=B_{t/2}-Y_{t/4}\topp{{\uC,\vA}}+Y_0\topp{{\uC,\vA}}$. Applying the time reversals again, we see that
  process
  $(H\topp{{\vA,\uC}}(t))_{t\in[0,1]}$ has the same law as the process  defined by the reverse transformation of $\widetilde H$, i.e.,
  \begin{align*}
     \widetilde H(1-t)-\widetilde H(1) & =
   (B_{(1-t)/2}-Y\topp{\uC,\vA}_{1/4-t/4}+Y\topp{\uC,\vA}_{0})-(B_{1/2}-Y\topp{\uC,\vA}_{1/4}+Y\topp{\uC,\vA}_{0}) \\& =
    (B_{(1-t)/2}-B_{1/2})+(Y\topp{\uC,\vA}_{1/4}-Y\topp{\uC,\vA}_{1/4-t/4}).
  \end{align*} This ends the proof, as
    $(B_{(1-t)/2}-B_{1/2})_{t\in[0,1]}\stackrel{d}= (B_{t/2})_{t\in[0,1]}$ and $(Y\topp{\uC,\vA}_{1/4}-Y\topp{\uC,\vA}_{1/4-t/4})_{t\in[0,1]}\stackrel{d}=
    (Y\topp{\vA,\uC}_{0}-Y\topp{\vA,\uC}_{t/4})_{t\in[0,1]}$.
\end{proof}

 \subsection*{Note} %
 After the first version of this paper was posted,     Barraquand and Le Doussal posted preprint of Ref. \cite{barraquand2021steady} in which they obtained a different representation of  the stationary measure of the KPZ equation on an interval as
  \begin{equation}
    \label{X2H} \left( H(t)\right)_{t\in[0,1]}\stackrel{d}{=}\left(B_{t/2}+X_t\right)_{t\in[0,1]},
  \end{equation}
     where $(B_t)_{t\in[0,1]}$ is the Brownian motion as before,
  $(X_t)_{t\in[0,1]}$   is an independent  stochastic process  with continuous trajectories such that the Radon-Nikodym derivative of its law $ \p_X$ on $C[0,1]$ with respect to the law $\p_{\widetilde B}$ of the scaled Brownian motion $({B}_{t/2})_{t\in[0,1]}$ is
  \begin{equation}\label{eq:Bar-LeD}
   \frac {\d \p_X}{\d \p_{\widetilde B}}
 =\frac{1}{ \mathsf Z} e^{-\vA \beta_1} \left(\int_0^1 e^{-2 \beta_t}dt\right)^{-\vA/2-\uC/2}
 \end{equation}
with the argument of %
the
Radon-Nikodym derivative
 denoted by $\beta=(\beta_t)_{t\in[0,1]}\in C[0,1]$. (Here $ \mathsf Z$ is a normalization constant.)
The proof in \cite{barraquand2021steady}  is given  for $\vA,\uC>0$, but it is conjectured that \eqref{X2H} represents the    stationary measure for the KPZ equation on any finite interval  with the Neumann boundary  conditions for any real $\vA,\uC$.

It was noted in   \cite{barraquand2021steady}  that $(X_t)_{t\in[0,1]}\stackrel d=(Y_0-Y_{t/4})_{t\in[0,1]}$  when $\vA,\uC>0$, and this result was later extended in \cite{Bryc-Kuznetsov-2021}  to all $\vA+\uC>0$.
To point out the difference between \eqref{X2H} and \eqref{Y2H},  we note that
our result does not extend to the case $\vA+\uC=0$, as the
 Markov process $(Y_t)_{t\in[0,\tau]}$ from Theorem \ref{T1.1}  is then not well defined.
On the other hand,  formula \eqref{eq:Bar-LeD} in this case gives the Brownian motion with drift, so \eqref{X2H}  gives the same law as  in \cite{CorwinKnizel2021}.

\begin{remark}
  \label{Rem-extend}
In view of  \cite[Remark 8.2]{CorwinKnizel2021}, it is plausible that representation \eqref{Y2H} holds %
 in the entire range of parameters $\vA+\uC>0$.
A closely related remark is that the restriction $s_d>\max\{-\vA,0\}$ in Theorem \ref{T1.3} might be relaxed to $s_d>0$ by  an analytic continuation argument as in the proof of Theorem \ref{Thm-L-Const} below.
 We do not pursue this here, as this would require consideration of atoms in the transition probabilities for the dual continuous Hahn process $(\mathbb{T}_s)$.
\end{remark}

\begin{remark}
Expression \eqref{psi} can also be written in the following %
form:
\begin{equation*}\label{Jacek-psi}
\psi\topp \tau(\vv s,\vv t) =\e\left[\exp\left( -\tau \sum_{k=1}^{d} t_k (\mathbb{T}_{s_k}-\mathbb{T}_{s_{k+1}})\right) \right],
\end{equation*}
where $(\mathbb T_s)_{s\in[0,S)}$ is the continuous dual Hahn process  with transition probabilities
\eqref{T-trans} and with the initial law given by the probability measure
\begin{equation*}\label{NEF}
    \p(\mathbb T_0=\d x)= \frac{1}{\Kab} e^{-\tau x} \mathfrak p_{0}(\d x).
\end{equation*}
\end{remark}

 \begin{remark}
 It might be interesting to point out  that identity \eqref{Dual-1} can  formally be written in the integral form.
 For %
  $\uC>s_1>\dots>s_d>s_{d+1}>-\vA$,
 and $t_0=0<t_1<\dots<t_d<t_{d+1}=\tau$
 define the
 step-wise function %
 \begin{equation*}
   \label{step-s}
   \sigma (t)=\sum_{j=1}^{d+1} s_j \mathbf{1}_{(t_{j-1},t_{j}]}(t).
 \end{equation*}
 By linearity,  the integral of   $\sigma$ is well defined for any random signed measure on a field generated  by the left-open right-closed intervals. An example of such a measure is  $(\alpha,\beta]\mapsto Y_\beta-Y_\alpha$ induced by process $(Y_t)_{t\in[0,\tau]}$.
  Then   %
  \eqref{Dual-1} can be written as
 \begin{multline*}\label{IbyP}
\int_0^\infty \e\left[ \exp\left(- \int_0^\tau \TT_{\sigma(t)} \d t \right)\middle \vert  \TT_{s_{d+1}}=x\right] \widetilde{\mathfrak p}_{s_{d+1}}(x)\d x\\= \e \left[\exp\left(\int_0^\tau  \sigma(t)\d Y_t\right)\right].
 \end{multline*}
 A related integral expression is discussed in \cite[Section 2.1.4]{CorwinKnizel2021}.
 \end{remark}

\subsection{Overview of the paper}  %
We begin with  a short self-contained proof of Theorem \ref{T1.2}  in Section~\ref{Sect:SecondProof}.
In Section \ref{Sect:Yakubovich} we introduce a non-standard version of the Kontorovich-Lebedev transform,   discuss   the Yakubovich heat kernel \eqref{p_t} and its semigroup,
and give an  analytic continuation argument that we use to relate the normalizing constants
\eqref{Kab2p0} and \eqref{C-formula}.
In Section \ref{Sect:Recipr}  we  prove %
Theorem \ref{T1.1}.
In Section \ref{Sec:ProofT1.3}  we prove Theorem \ref{T1.3}.

In Section \ref{Sect: DMS} we
 use  the  Kontorovich-Lebedev transform  to prove   two dual representations, one conditional and one unconditional.
 The unconditional dual representation in Theorem \ref{T2}  yields Theorem \ref{T1.2}, but we suppressed that argument in lieu of the more  direct ad-hoc argument  in Section \ref{Sect:SecondProof}.

 In Section \ref{Sect: Rel-HW} we discuss the Hartman-Watson density  and  its relation to the  Yakubovich heat kernel. Using this relation, we derive a closed form expression  for the Laplace transform,  with respect to argument $\tau$, for the normalizing constant \eqref{C-formula} in the strip $0<\vA+\uC<2$.

 In the Appendix we   collect some  known results  in the form that we need, sometimes with sketches of proofs for completeness.

\section{Proof of Theorem \ref{T1.2}} \label{Sect:SecondProof}
Let  $(Z_s)_{s\in(-\infty, \uC)}$ be a Markov
process
with state space  $(0,\infty)$ and transition probabilities %
$\mathbb P(Z_t = \d v\mid Z_s = u)$
  given by the densities
\begin{equation}\label{ICAK-q}
   q_{s,t}(u,v)= \frac{\vert \Gamma(\frac{\uC-t+\i v}{2},\frac{t-s+\i (u+v)}{2},\frac{t-s+\i (u-v)}{2})\vert ^2}{4\pi\Gamma(t-s)\vert \Gamma(\frac{\uC-s+\i u}{2},\i v)\vert ^2},\;\; -\infty< s<t<\uC, \; \; u,v>0 .
\end{equation}
We are going to prove that
\begin{multline}\label{ICAK-A1}
   \int_0^\infty \e\left[\exp\left(-\sum_{k=1}^{d+1}(t_k-t_{k-1})Z_{s_k}^2\right)\middle\vert Z_{s_{d+1}}=u\right]\varphi_{s_{d+1}}(u)\d u
  \\ =
   \e\left[ \exp\left(\sum\limits_{k=1}^{d+1}  s_{k} (Y_{ t_{k}}-Y_{t_{k-1}}) \right) \right],
\end{multline}
where for $-\vA<s<\uC$,
\begin{equation}
  \label{nu_s}
  \varphi_s(u)=\frac{2^{\vA+\uC}}{ 8\pi C\topp \tau_{\vA,\uC}}\frac{\vert \Gamma(\tfrac{\vA+s+\i u}{2},\tfrac{\uC-s+\i u}{2})\vert ^2}{\vert \Gamma(\i u)\vert ^2}.
\end{equation}
Comparing \eqref{ICAK-A1} and \eqref{p2q0}, we see that  $Z_s^2=\TT_s$.
Since
$2\sqrt{x}\,\widetilde {\mathfrak p}_s(x)= \varphi_s(\sqrt{x}) $,
after a change of variable $u^2=x$  the left hand sides of \eqref{ICAK-A1} and \eqref{Dual-1} are the same. So the proof of Theorem \ref{T1.2}
 reduces to the proof of
 \eqref{ICAK-A1}.
\begin{proof}[Proof of Theorem \ref{T1.2}]
This proof consists of verification of the integral identity \eqref{ICAK-A1}, assuming Theorem \ref{T1.1}.  The two key identities are
\begin{equation}  \label{K-Melin}
\int_{\r } e^{t x} K_{\i u}(e^x) K_{\i v} (e^x) \d x=\frac{2^{t-3}}{\Gamma(t)}
\vert \Gamma((t+\i (u+v))/2,(t+\i (u-v))/2)\vert ^2,
\end{equation}
which holds for $u,v\in\r$ and $\re(t)>0$ (see {\cite[6.8 (48)]{erdelyi1954fg}}), and
\begin{equation}\label{K-Mellin2}
2^{s-2} \vert \Gamma(\tfrac{s+\i u}{2})\vert ^2=
\int_{\r} e^{sx} K_{\i u}(e^x)\d x,
\end{equation}
which holds for all $u\in\r$ and $\re(s)>0$ (see %
 \cite[6.8 (26)]{erdelyi1954fg}).

For ease of reference, we denote the left-hand side of \eqref{ICAK-A1} by
$\Psi(\vv s,\vv t)$
  and  expand it  as an  explicit multivariate integral %
    \begin{multline*}
 \Psi(\vv s,\vv t)= \\
   \int_0^\infty \varphi_{s_{d+1}}(u_{d+1}) \d u_{d+1} \int_{\r_+^d} e^{-  \sum_{k=1}^{d+1}(t_k-t_{k-1})u_k^2} \prod_{k=1}^{d} q_{s_{k+1},s_{k}}(u_{k+1},u_k) d u_1\dots\d u_d,
\end{multline*}
where  $\uC>s_1>\dots>s_d>s_{d+1}>-\vA$ with  $\vA+\uC>0$, and $q_{s,t}(u,v)$ is given by \eqref{ICAK-q}.
Denote $\Delta_k=t_{k}-t_{k-1}$. Then

\begin{align}\label{Yizao} C\topp \tau_{\vA,\uC}\,\Psi(\vv s,\vv t) & = \tfrac{2^{a+c}}{16}\,\int_{\r_+^{d+1}}\exp\left(- \sum_{k=1}^{d+1}u_k^2\Delta_k\right)
\left\vert \Gamma\left(\tfrac{\vA+s_{d+1}+\i u_{d+1}}{2},\tfrac{\uC-s_{1}+\i{u_{1}}}{2}\right)\right\vert ^2\\
& \quad \times\prod_{k=1}^{d}\tfrac{\left\vert \Gamma\left(\tfrac{s_k-s_{k+1}+\i( u_k-u_{k+1})}{2}, \tfrac{s_k-s_{k+1}+\i(u_k+u_{k+1})}{2}\right)\right\vert ^2}{8\Gamma(s_k-s_{k+1})}\,\mu(\d\vv u),\nonumber
\end{align}
where $\mu(\d\vv u)=\prod_{k=1}^{d+1}\,\mu(\d u_k)$. Inserting \eqref{K-Mellin2} twice and \eqref{K-Melin} $d$ times into \eqref{Yizao}, we get
\footnotesize
\begin{multline}  C\topp \tau_{\vA,\uC}\, \Psi(\vv s,\vv t)=
   \\  \int_{\r_+^{d+1}}\,e^{- \sum_{k=1}^{d+1}u_k^2\Delta_k}\;{\int_\r e^{(\vA+s_{d+1}) x_{d+1}}K_{\i u_{d+1}}(e^{x_{d+1}})\tfrac{\d x_{d+1}}{2^{s_{d+1}-s_1}} \int_\r  e^{(\uC-s_1)x_{0}} K_{\i u_1}(e^{x_{0}})\d x_{0}} \label{MultiIntegrals}\\
  \quad \times\left(\prod_{k=1}^{d}\tfrac{1}{ 2^{s_k-s_{k+1}}} \int_\r e^{(s_k-s_{k+1})x_k}
K_{\i u_k}\left(e^{x_k}\right)
K_{\i u_{k+1}}\left(e^{x_k}\right)\d x_k\right)\, \mu(\d\vv u)  =
\\
  \int_{\r_+^{d+1}}\,e^{- \sum_{ k=1}^{d+1}u_k^2\Delta_k}\;\int_\r e^{(\vA +s_{d+1})x_{d+1}} K_{\i u_{d+1}}\left(e^{x_{d+1}}\right)\d x_{d+1} \int_\r  e^{(\uC-s_1)x_{0}} K_{\i u_1}\left(e^{x_{0}}\right)\d x_{0} \\
  \quad \times\left(\prod_{k=1}^{d} \int_\r e^{(s_k-s_{k+1})x_k} K_{\i u_k}\left(e^{x_k}\right)
K_{\i  u_{k+1}}\left(e^{x_k}\right) \d x_k\right)\,\mu(\d \vv u). \end{multline}
\normalsize
Recalling \eqref{mu} and \eqref{K-def},  the expression under the multiple integrals in \eqref{MultiIntegrals}  is bounded, up to a multiplicative constant,  by the product of two integrable expressions:
$$ e^{- \sum_{ k=1}^{d+1}u_k^2\Delta_k},
$$
which is integrable with respect to $\mu(\d\vv u)$ as $\Delta_k=t_k-t_{k-1}>0$, and
$$
e^{(\vA+s_{d+1}) x_{d+1}} K_0\left(e^{x_{d+1}}\right)e^{(\uC-s_1)x_0} K_0\left(e^{x_{0}}\right)\prod_{k=1}^d e^{(s_k-s_{k+1})x_k} K_0^2\left(e^{x_k}\right),
$$
which is integrable over $\r^{d+2}$ with respect to $\d\vv x$, since functions $e^{\eps x}K_0(e^{x})$ and  $e^{\eps x}K_0^2(e^{x})$ are integrable for any $\eps>0$ by the well known bounds \eqref{K0-bd1} and \eqref{K0-bd2}.
Using Fubini’s theorem, we rearrange the order of integrals in \eqref{MultiIntegrals}. We get
\begin{multline*} %
 C\topp \tau_{\vA,\uC} \Psi(\vv s,\vv t) =
   \int_{\r^{d+2}}e^{(\uC-s_1)x_{0} +\sum_{k=1}^d(s_k-s_{k+1})x_k +(\vA +s_{d+1})x_{d+1}}
  \\
\times \int_{\r_+^{d+1}}e^{- \sum_{ k=1}^{d+1}u_k^2\Delta_k}  K_{\i u_{d+1}}\left(e^{x_{d+1}}\right)    K_{\i u_1}\left(e^{x_{0}}\right)
\\\times  \left(\prod_{k=1}^{d}   K_{\i u_k}\left(e^{x_k} \right)
K_{\i  u_{k+1}}\left(e^{x_k}\right)\right)\,\mu(\d \vv u)\,
\d \vv x.
\end{multline*}
Noting that
$$
(\uC-s_1)x_{0} +\sum_{k=1}^d(s_k-s_{k+1})x_k +(\vA +s_{d+1})x_{d+1}=
\uC x_0+\vA x_{d+1}+\sum_{k=1}^{d+1} s_k(x_k-x_{k-1}),$$
we have
\begin{align*} %
 C\topp \tau_{\vA,\uC}
 \Psi(\vv s,\vv t)& =
  \int_{\r^{d+2}}e^{\sum_{k=1}^{d+1} s_k(x_k-x_{k-1})}e^{\uC x_{0} + \vA  x_{d+1}}
  \\
& \quad  \times
 \left(\prod_{k=1}^{d+1}\int_{\r_+}e^{-  u_k^2(t_k-t_{k-1})}    K_{\i u_k}\left(e^{x_k} \right)  K_{\i u_k}\left(e^{x_{k-1}} \right)   \mu(\d u_k)\right)
  \d \vv x
\\ & =  C\topp \tau_{\vA,\uC} \int_{\r^{d+2}} e^{\sum_{k=1}^{d+1} s_k(x_k-x_{k-1})}\,
f_{\vv t}(\vv x) \d \vv x,
\end{align*}
where the last formula follows from \eqref{p_t} and \eqref{joint-density}. Thus \eqref{ICAK-A1} holds.
\end{proof}
\section{Killed Brownian motion and the Yakubovich heat kernel}
 \label{Sect:Yakubovich}
 In this section we introduce   notation, collect results from the literature, and prove auxiliary facts that we need for  Section \ref{Sect:Recipr}, where we prove Theorem \ref{T1.1} and for Section \ref{Sec:ProofT1.3}, where we prove Theorem \ref{T1.3}.
\subsection{Kontorovich--Lebedev transform} \label{Sec:Bkg}

According to \cite[10.45(v)]{NIST2010}  (see also the papers \cite{Yakubovich2004,Yakubovich2011}, or the monograph \cite{yakubovich1996index}),
the Kontorovich--Lebedev integral transform is defined as %
\begin{equation*}
\mathbb K_{\i u} [f]:=\int_0^{\infty} K_{\i u} (x) f(x) \d x.
\end{equation*}
It is an isometry between $L_2((0,\infty), x\d x)$ and $L_2((0,\infty), \mu(\d u))$, where $\mu$ is given by \eqref{mu}.

If $g(u)=\mathbb K_{\i u} [f]$, then the inverse transform is
\begin{equation*}
f(x)=\frac{1}{x} \int_0^{\infty} K_{\i u} (x) g(u) \mu(\d u),
\end{equation*}
see \cite[Section 4.15]{titchmarsh2011elgenfunction} and \cite{Yakubovich2004,Yakubovich2011}.

We will use a slightly different version of the Kontorovich--Lebedev transform, defined as
\begin{equation}\label{K-L+}
{\mathcal K}f (u)=\int_{\r} f(x) K_{\i u}(e^x) \d x.
\end{equation}
Note the difference between  $K$,  $\mathbb K$,  and curly ${\mathcal K}$ in the notation.
  From the above facts we conclude that ${\mathcal K}$ is an isometry between $L_2(\r, \d x)$ and $L_2((0,\infty), \mu(\d u))$ and the inverse transform is given by
\begin{equation}\label{K-inv}
{\mathcal K}^{-1} g(x)= \int_0^{\infty} K_{\i u}(e^{x}) g(u) \mu(\d u).
\end{equation}
Note that if $f$ is a function of $x\in\r$ and $g$ is a function of $u>0$, then  $\kk f$ is a function of $u>0$ and $\kk^{-1}g$ is a function of $x\in\r$.
When we need to apply $\kk$ to an  explicit expression in variable $x$, we will write $\kk[f(x)]$ or $\kk[f(x)](u)$. In this notations, the left hand side of \eqref{K-L+} is an abbreviated form of the more precise expression $\kk[f(x)](u)$. We will use similar  conventions in our notation for the operators. %

  As integral operators, both  $\mathcal{K}$ and $\kk^{-1}$   extend to functions which are not necessarily square integrable, and we will need to apply them to such functions.
  (Even though they may cease to be inverses of each other.)
  It is clear that $\mathcal{K}$  is well defined on
$L_1(\r, K_0(e^x)\d x)$ which we will abbreviate as $L_1(K_0)$,
compare
 \cite[(1.2)]{Yakubovich2003kontorovich}. It is clear that $\kk^{-1}$ is well defined on  $L_1((0,\infty),\d \mu)$, which we will abbreviate as $L_1(\d \mu)$.

 \subsection{The   semigroup with kernel $p_t(x,y)$}
 Recall definition \eqref{p_t}. We can now explain how Yakubovich heat kernel  $p_t(x,y)$ defines  a sub-Markov process. %
 (Note that process $(Y_t)$ in Theorem \ref{T1.1} is Markov, not sub-Markov.)
 The Kontorovich--Lebedev transform defines a semigroup of contractions %
 \begin{equation}\label{PP}
\widetilde {\pp}_t ={\mathcal K}^{-1} e^{-t u^2} {\mathcal K} ,\;\; t\geq 0,
\end{equation}
which, at first, act on $L_2(\r,\d x)$.
 Here we understand $e^{-t u^2}$ to be the multiplication operator on $L_2((0,\infty), \mu(\d u))$ that maps a function $g(u)$ to $e^{-t u^2} g(u)$.
Taking into account \eqref{K-L+} and \eqref{K-inv}, it is apparent that $\widetilde{\pp_t}$ for $t>0$  is an integral operator with  the  Yakubovich  heat kernel \eqref{p_t}.  Sousa and Yakubovich
\cite[Example 3.7]{SousaYakubovich2018} relate this kernel to the Sturm--Liouville theory of differential operators, so the operators $\widetilde \pp_t$ as integral operators with kernel \eqref{p_t} act also on bounded continuous functions, and define transition probabilities of a sub-Markov process.
The following  is a restatement of these facts.
\begin{theorem}\label{T-YS}
  The Yakubovich heat kernel $p_{t}(x,y)$ defined in \eqref{p_t} has the following properties.
   \begin{enumerate}
     [(i)]
     \item   For $t>0$ and $x,y\in\r$, the kernel is  symmetric, $p_t(x,y)=p_t(y,x)$ and positive, $p_t(x,y)>0$.
     \item For $t>0$ and $x\in\r$, $p_t(x,y)$ defines a sub-probability density function,  $\int_\r p_t(x,y)\d y <1$.
     \item For $s,t>0$ and $x,z\in \r$, the Markov property holds, $\int_\r p_t(x,y)p_s(y,z)\d y=p_{s+t}(x,z)$.
   \end{enumerate}
\end{theorem}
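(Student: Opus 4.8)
The plan is to treat $p_t$ as the heat kernel of the self-adjoint operator that the Kontorovich--Lebedev transform $\kk$ diagonalizes, reading off (i)--(iii) from the isometry \eqref{K-L+}--\eqref{K-inv} together with a Feynman--Kac representation. The symmetry $p_t(x,y)=p_t(y,x)$ in~(i) is immediate from \eqref{p_t}, since the integrand is invariant under $x\leftrightarrow y$. The basic observation I would record first is that, for fixed $t>0$ and $x\in\r$, the function $g_x(u):=e^{-tu^2}K_{\i u}(e^x)$ lies in $L_2((0,\infty),\mu)$: by \eqref{K-def} we have $|K_{\i u}(e^x)|\le K_0(e^x)$, while by \eqref{mu} the mass $\mu(\d u)$ grows only like $u\,e^{\pi u}\,\d u$, which is overwhelmed by the Gaussian factor $e^{-2tu^2}$. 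Comparing \eqref{p_t} with the inversion formula \eqref{K-inv}, this says precisely that $p_t(x,\cdot)=\kk^{-1}g_x$; since $\kk$ is an isometry with inverse $\kk^{-1}$, it follows that $p_t(x,\cdot)\in L_2(\r)$ and $\kk[p_t(x,\cdot)]=g_x$.

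With this in hand the Chapman--Kolmogorov identity~(iii) is just Parseval. Since $p_s(y,z)=p_s(z,y)$ and both $p_t(x,\cdot)$ and $p_s(z,\cdot)$ lie in $L_2(\r)$, the integral $\int_\r p_t(x,y)p_s(y,z)\,\d y$ converges absolutely and equals $\langle p_t(x,\cdot),p_s(z,\cdot)\rangle_{L_2(\r)}$; applying the isometry of $\kk$ with $\kk[p_t(x,\cdot)]=g_x$ and $\kk[p_s(z,\cdot)]=g_z$ gives
\[
\int_\r p_t(x,y)p_s(y,z)\,\d y=\langle p_t(x,\cdot),p_s(z,\cdot)\rangle_{L_2(\r)}=\int_0^\infty e^{-(s+t)u^2}K_{\i u}(e^x)K_{\i u}(e^z)\,\mu(\d u)=p_{s+t}(x,z).
\]
This route sidesteps the merely conditional convergence of the naive orthogonality relation $\int_\r K_{\i u}(e^y)K_{\i v}(e^y)\,\d y$ (a $\delta$-type object).

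For the positivity in~(i) and the sub-probability bound~(ii) the spectral formula is unhelpful, because the eigenfunctions $K_{\i u}(e^x)$ change sign, so I would switch to a probabilistic representation. Writing $\phi_u(x):=K_{\i u}(e^x)$ and using the modified Bessel equation in the logarithmic variable $z=e^x$ gives $-\phi_u''+e^{2x}\phi_u=u^2\phi_u$; hence $\kk$ diagonalizes the Schr\"odinger (Liouville/Morse) operator $\ll=-\partial_x^2+e^{2x}$, with continuous spectrum $[0,\infty)$ and spectral measure $\mu$, and the semigroup \eqref{PP} is $\widetilde{\pp}_t=e^{-t\ll}$. The Feynman--Kac formula then represents it as $\widetilde{\pp}_t f(x)=\e_x[f(W_t)\exp(-\int_0^t e^{2W_s}\,\d s)]$, where $(W_s)$ is Brownian motion with generator $\partial_x^2$; in kernel form,
\[
p_t(x,y)=g_t(x,y)\,\e_{x\to y}^{t}\!\left[\exp\!\left(-\int_0^t e^{2W_s}\,\d s\right)\right],\qquad g_t(x,y)=\frac{e^{-(x-y)^2/(4t)}}{\sqrt{4\pi t}}>0,
\]
with $g_t$ the free kernel and the expectation taken over the Brownian bridge. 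As the Feynman--Kac factor lies in $(0,1]$, this yields $0<p_t(x,y)\le g_t(x,y)$, proving positivity, and integrating in $y$ gives
\[
\int_\r p_t(x,y)\,\d y=\e_x\!\left[\exp\!\left(-\int_0^t e^{2W_s}\,\d s\right)\right]<1,
\]
the strict inequality holding because $\int_0^t e^{2W_s}\,\d s>0$ almost surely for $t>0$; this is~(ii).

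The main obstacle is the rigorous identification $\widetilde{\pp}_t=e^{-t\ll}$ underlying the last step: one must verify that $\ll=-\partial_x^2+e^{2x}$ is essentially self-adjoint (say on $C_c^\infty(\r)$; the potential vanishes at $-\infty$ and blows up at $+\infty$, giving the limit-point case at both ends), that its spectral resolution is precisely the Kontorovich--Lebedev transform with spectral measure $\mu$, and that the resulting $L_2$-semigroup coincides with the Feynman--Kac semigroup. This is the classical Sturm--Liouville analysis of the Morse potential, which is exactly what \cite{SousaYakubovich2018} supplies and what the statement of the theorem invokes; granting it, (i) and (ii) follow as above, while (iii) has already been obtained directly from the isometry of $\kk$.
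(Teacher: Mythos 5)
Your proposal follows essentially the same route as the paper: both identify the semigroup $\widetilde\pp_t=\kk^{-1}e^{-tu^2}\kk$ with the Feynman--Kac semigroup of Brownian motion killed at rate $e^{2x}$ via the eigenfunction property of $K_{\i u}(e^x)$, read off positivity and the strict sub-probability bound from that probabilistic representation, and defer the rigorous spectral identification to the Sturm--Liouville analysis in Sousa--Yakubovich. Your Parseval/isometry derivation of the Chapman--Kolmogorov identity (iii) is a slightly more explicit, self-contained version of what the paper obtains from the abstract semigroup property of $\widetilde\pp_t$, but it is not a genuinely different argument.
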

Since we could not locate an appropriate reference for Theorem \ref{T-YS}, we sketch a probabilistic justification
by relating the semigroup $\widetilde \pp_t$ in \eqref{PP}   to the killed Brownian motion.
Let $(\widehat X_t)_{t\geq 0}$ be the process defined by the Markov generator
\begin{equation*}
{\mathcal L}^{\widehat X}f(x)=f''(x)-e^{2x} f(x).
\end{equation*}
We can identify $\widehat X$ with the   $\sqrt{2}$ multiple of Brownian motion that is killed at rate $k(x)=e^{2x}$. More precisely, let $B$ be a standard Brownian motion, then the semigroup of the process $\widehat X$ is given by
\begin{equation}\label{P_t-semi}
\widehat {\mathcal {P}}_t f(x)=\e_x[f(\widehat X_t)]=\e\Big[ e^{-\int_0^t e^{2 \sqrt{2} B_s} \d s} f(\sqrt{2} B_t) \Big\vert  B_0=\frac{x}{\sqrt{2}} \Big].
\end{equation}
It is then clear that $\widehat {\mathcal {P}}_t$  is positive and sub-Markovian, $\widehat \pp_t 1 <1$ for $t>0$.

Here is how one can see  that the semigroup defined by \eqref{P_t-semi} is the same semigroup that we defined in \eqref{PP}.
The modified Bessel function $K_{\i u}(x)$ satisfies the differential equation
$$
x^2 f''(x)+xf'(x)-x^2 f(x)=-u^2 f(x),
$$
thus the function $f(x)=K_{\i u}(e^x)$ satisfies the differential equation
$$
f''(x)-e^{2x} f(x)=-u^2 f(x).
$$
This shows that functions $f_{u}(x)=K_{\i u}(e^x)$ are eigenfunctions of the Markov generator ${\mathcal L}^{\widehat{X}}$ and of the Markov semigroup $\widehat{\pp}_t $:
\begin{equation*}
{\mathcal L}^{\widehat X} f_{u}(x)=-u^2 f_{u}(x), \;\;\; \widehat{\pp }_t f_{u}(x) =f_{u}(x) e^{-t u^2} .
\end{equation*}
We conclude that the integral operator  ${\mathcal K}$ (our modified version \eqref{K-L+} of the Kontorovich--Lebedev transform) diagonalizes the Markov semigroup $\widehat \pp_t$, thus extending the action of semigroup
 \eqref{PP} beyond the initial domain $L_2(\r,\d x)$. We therefore  identify $\widetilde \pp _t$ and $\widehat \pp _t$.
  (We will apply $\widetilde \pp _t$ to functions    of the form $e^{ax}$ which are not in $L_2(\r,\d x)$.)

\begin{remark}
We note that kernel $p_t(x,y)$  appears in \cite[Section 4]{MatsumotoYor2005I} as  $q_1(2t,x,y)$ in their notation.
\end{remark}

\subsection{The normalizing constants}
In view of Theorem  \ref{T-YS}(iii), it is clear that the normalizing constant for the density \eqref{joint-density} is given by the bivariate integral  \eqref{C-formula} which is symmetric in $\vA,\uC$.  We need to show that this integral is finite, and that it is given by an expression that we will use in the proof of Theorem \ref{T1.3}. %
We recall the standard notation for the  Pochhammer symbols and their products:
$$ (a)_n=a(a+1)\dots(a+n-1), \quad (a_1,a_2,\dots,a_k)_n=(a_1)_n(a_2)_n\cdots (a_k)_n.$$

 \begin{theorem}\label{Thm-L-Const}
   If $\vA+\uC>0$ and $\tau>0$ then the integral \eqref{C-formula}
    is finite and %

\begin{equation}
  \label{C2K}
   C\topp \tau_{\vA,\uC}= 2^{\vA+\uC}\left(\CabLa+ \max\{\DabLa,\DbaLa\}\right),
\end{equation}
 with
   \begin{eqnarray}
    \label{C-normalize}
    \CabLa &=& \frac{1}{8\pi}\int_0^\infty e^{-\tau u^2} \frac{\vert \Gamma(\tfrac{\vA+\i u}{2},\tfrac{\uC+\i u}{2})\vert ^2}{\vert \Gamma(\i u)\vert ^2}\d u, \\
    \nonumber \\
   \DabLa &=&
       \frac{\Gamma(\frac{\uC+\vA}{2},\frac{\uC-\vA}{2})}{2\vA \Gamma(-\vA )}\sum_{\{k\geq 0:\; \vA+2k<0 \}} e^{\tau(\vA+2k)^2}(\vA+2k)\frac{(\vA,\frac{\vA+\uC}{2})_k}{k!(1+\frac{\vA-\uC}{2})_k}.
  \label{D-normalize}
\end{eqnarray}
 \end{theorem}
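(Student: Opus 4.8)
The plan is to establish \eqref{C2K} first in the region $\vA,\uC>0$, where both sums defining $\DabLa$ and $\DbaLa$ are empty, and then to extend it across the half-plane $\{\re\vA>-\uC\}$ (with $\uC>0$ held fixed) by analytic continuation in $\vA$; the residues swept up during the continuation will produce exactly the correction $\DabLa$. For the base case, fix $\vA,\uC>0$, insert \eqref{p_t} into \eqref{C-formula}, and interchange the order of integration. Absolute convergence (hence Fubini) is justified by the bounds \eqref{K0-bd1} and \eqref{K0-bd2} on $K_0$, exactly as in the proof of Theorem \ref{T1.2}. The double integral then factors, and applying \eqref{K-Mellin2} to each inner integral, which is legitimate since $\re\vA,\re\uC>0$, together with $\mu(\d u)=\tfrac{2}{\pi}\d u/|\Gamma(\i u)|^2$, gives $C\topp\tau_{\vA,\uC}=2^{\vA+\uC}\CabLa$. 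This is \eqref{C2K} in this range and in particular shows finiteness for $\vA,\uC>0$.

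Next I fix $\uC>0$ and regard $\vA$ as a complex variable. Writing $h(x)=\int_\r e^{\uC y}p_\tau(x,y)\,\d y$, which is the action of the semigroup \eqref{P_t-semi} on $e^{\uC\,\cdot}$, the killed Brownian motion representation shows that $e^{-\uC x}h(x)\to e^{\tau\uC^2}$ as $x\to-\infty$, while $h(x)$ decays super-exponentially as $x\to+\infty$. Hence $C\topp\tau_{\vA,\uC}=\int_\r e^{\vA x}h(x)\,\d x$ converges absolutely precisely when $\re\vA>-\uC$ and is holomorphic there (differentiation under the integral is justified by the same bounds, as $|e^{\vA x}|=e^{(\re\vA)x}$). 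Since $2^{\vA+\uC}\CabLa$ is holomorphic for $\re\vA>0$ and agrees with $C\topp\tau_{\vA,\uC}$ there by the base case, uniqueness of analytic continuation reduces the theorem to computing the continuation of $2^{\vA+\uC}\CabLa$ into $\re\vA<0$.

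That continuation is a contour argument. By evenness of the integrand, $\CabLa=\tfrac{1}{16\pi}\int_\r \Phi(u)\,\d u$, where $\Phi(u)=e^{-\tau u^2}\Gamma(\tfrac{\vA+\i u}{2})\Gamma(\tfrac{\vA-\i u}{2})\Gamma(\tfrac{\uC+\i u}{2})\Gamma(\tfrac{\uC-\i u}{2})/\big(\Gamma(\i u)\Gamma(-\i u)\big)$. For $\re\vA>0$ the poles of $\Gamma(\tfrac{\vA+\i u}{2})$ at $u=\i(\vA+2k)$ lie in the upper half-plane and those of $\Gamma(\tfrac{\vA-\i u}{2})$ at $u=-\i(\vA+2k)$ lie in the lower half-plane; as $\re\vA$ decreases through $-2k$ each such pair crosses the real axis. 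The analytic continuation is therefore $\tfrac{1}{16\pi}\int_{\mathcal C}\Phi$ over a contour $\mathcal C$ keeping these poles on their original sides, and pushing $\mathcal C$ back to $\r$ collects $2\pi\i$ times the residues at $u=\pm\i(\vA+2k)$ for every $k$ with $\vA+2k<0$. At $u=\i(\vA+2k)$ one uses $\Res_{w=-k}\Gamma(w)=(-1)^k/k!$ together with the values $\tfrac{\vA-\i u}{2}=\vA+k$, $\tfrac{\uC+\i u}{2}=\tfrac{\uC-\vA}{2}-k$, $\tfrac{\uC-\i u}{2}=\tfrac{\uC+\vA}{2}+k$ at the pole; the reflection identities $\Gamma(z)\Gamma(-z)=-\pi/(z\sin\pi z)$, $\Gamma(\vA)\sin(\pi\vA)=-\pi/(\vA\Gamma(-\vA))$ and $\Gamma(z-k)=(-1)^k\Gamma(z)/(1-z)_k$ reduce the resulting ratio of Gamma functions to the summand of \eqref{D-normalize}, including the Pochhammer factor $(\vA,\tfrac{\vA+\uC}{2})_k/(1+\tfrac{\vA-\uC}{2})_k$. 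Evenness makes the lower pole contribute the same as the upper one, and after tracking the orientations and the powers of $\i$ one obtains $\tfrac{1}{16\pi}\int_{\mathcal C}\Phi=\CabLa+\DabLa$. This proves \eqref{C2K} for $-\uC<\vA<0$ with $\vA\notin-2\zz$; the excluded points $\vA=-2k$ are handled by continuity, since the double zero of $1/(\Gamma(\i u)\Gamma(-\i u))$ at $u=0$ cancels the coincident poles and keeps $\CabLa$ finite there.

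Finally, the symmetry of the two parameters closes the argument: both \eqref{C-formula} and $\CabLa$ are symmetric in $\vA,\uC$, so the case $\uC<0<\vA$ follows by interchanging the parameters, producing the correction $\DbaLa$. Because $\vA+\uC>0$ forbids both parameters from being negative, at most one of the sums $\DabLa,\DbaLa$ is nonempty; the other vanishes, and one checks that the nonempty one is nonnegative, so that $\max\{\DabLa,\DbaLa\}$ selects the correct correction in every case, yielding \eqref{C2K}. I expect the principal difficulty to lie in the rigorous bookkeeping of the contour deformation — establishing holomorphy and absolute convergence of the double integral on the strip, correctly tracking orientation and the evenness-induced doubling of residues, and treating the pole coincidence at $u=0$ for $\vA\in-2\zz$ — rather than in the routine but lengthy Gamma-function simplification of each individual residue.
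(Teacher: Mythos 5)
Your proposal is correct and follows essentially the same route as the paper's proof: the base case $\vA,\uC>0$ via Fubini and the Mellin identity \eqref{K-Mellin2}; analytic continuation in $\vA$ with the residues at the crossing poles producing $\DabLa$ (the paper performs the identical deformation in the variable $z=\i u$, shifting between vertical lines rather than indenting the real $u$-axis, and carries out the same Gamma/Pochhammer simplification you outline); continuity to recover the points $\vA\in-2\zz$; and symmetry in $(\vA,\uC)$ for the case $\uC<0$. The one genuinely different ingredient is how you establish that $\vA\mapsto C\topp\tau_{\vA,\uC}$ is finite and holomorphic on $\{\re\vA>-\uC\}$: you argue directly from the asymptotics of $h(x)=\int_\r e^{\uC y}p_\tau(x,y)\,\d y$, using the killed-Brownian-motion representation \eqref{P_t-semi} to get the sharp bound $h(x)\le e^{\uC x+\tau\uC^2}$ (note the pure kernel bound $p_\tau(x,y)\le C K_0(e^x)K_0(e^y)$ only gives $h(x)=O(|x|)$ as $x\to-\infty$, which would not suffice, so the Feynman--Kac bound is essential here), whereas the paper proves analytic continuability of the explicit contour integral $f(\vA,\uC)$ and then invokes the Lukacs--Szasz criterion that finiteness of a Laplace transform on a half-plane is equivalent to analytic extendability. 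Your route is more probabilistic and arguably more transparent, at the cost of leaning on the identification of the integral-kernel semigroup with the Feynman--Kac semigroup applied to the non-$L_2$ function $e^{\uC\,\cdot}$ — a point the paper itself treats informally, and which a monotone-convergence argument for the positive kernels closes. Your observation that one must also check the sign of the nonzero term among $\DabLa,\DbaLa$ for the $\max$ in \eqref{C2K} to select it is a detail the paper leaves implicit as well.
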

(Note that $ \DabLa=0$ if $\vA\geq 0$, i.e., at most one of $\DabLa$ and $\DbaLa$ is non-zero. In particular, if $\vA,\uC\ge 0$, then $C\topp \tau_{\vA,\uC}= 2^{\vA+\uC}\CabLa$.)

 The proof consists of the following steps: we begin by proving \eqref{C2K}  for $\vA,\uC>0$, then we show that the integral \eqref{C-formula} is finite for all $\vA+\uC>0$, and finally prove that \eqref{C2K} holds also for $\vA\leq 0$.

{
\subsubsection{Proof for the Case $\vA,\uC>0$}
 We use \eqref{p_t} to write the left hand side of \eqref{C-formula} as an iterated  integral
  $$
  C\topp \tau_{\vA,\uC}=\int_{\r^2}  e^{\vA x+ \uC y}  \frac{2}{\pi}\left(\int_0^{\infty} e^{-\tau u^2} K_{\i u}(e^x) K_{\i u}(e^y) \frac{\d u}{{\vert \Gamma(\i u)\vert ^2}} \right)\d x \d y.
 $$
  Since $\vert  K_{\i u}(e^x)\vert \leq K_0(e^x)$ and $\vA,\uC>0$,  the integrand is integrable, see (\ref{K0-bd1}-\ref{K0-bd2}), and we can use Fubini's theorem to change the order of the integration.
  From  the integral identity \eqref{K-Mellin2}
 we see that  $C\topp \tau_{\vA,\uC}$ is  equal to
\begin{equation}
  \label{C-X}
 \frac{2^{\vA+\uC}}{8\pi}   \int_0^\infty e^{-\tau u^2}\frac{\vert \Gamma(\tfrac{\vA+\i u}{2},\tfrac{\uC+\i u}{2})\vert ^2}{\vert \Gamma(\i u)\vert ^2}\d u,
\end{equation}
 which  is  %
 \eqref{C2K} when $\vA,\uC>0$.
}
\subsubsection{Proof of finiteness for  all $\vA+\uC>0$}

Let $\tau>0$ be a fixed constant. We define a function of two variables
\begin{equation}\label{eqn1}
f(\vA,\uC)= \frac{1}{2\pi \i} \int_{\i \r} e^{\tau z^2} \frac{\Gamma((\vA+ z)/2,(\vA- z)/2,(\uC+z)/2,(\uC- z)/2)}
{\Gamma(z) \Gamma(-z)} \d z.
\end{equation}
The integral is taken over the imaginary line in the complex plane. Writing $z=\i u$ we obtain an equivalent representation
\begin{equation*}\label{eqn1b}
f(\vA,\uC)= \frac{1}{2\pi} \int_{\r} e^{-\tau u^2} \frac{\Gamma((\vA+ \i u)/2,(\vA- \i u)/2,(\uC+\i u)/2,(\uC- \i u)/2)}
{\vert \Gamma(\i u)\vert ^2} \d u.
\end{equation*}
Note that the integrand in \eqref{eqn1} (as a function of $z$) is meromorphic and has poles at points
$$
\{\pm (\vA+2n), \pm (\uC+2n) \; : \; n=0,1,2,3,\dots\},
$$
and it is clear that $f(\vA,\uC)$ is an analytic function of two variables $(\vA,\uC)$ in the domain
$$
D_0:=\{(\vA,\uC) \in \c^2 \; : \; \re(\vA)>0, \re(\uC)>0\}.
$$

\begin{theorem}\label{thm1}
The function $f(\vA,\uC)$ can be analytically continued to a function holomorphic in
$$
\Omega=\{(\vA,\uC) \in \c^2 \; : \; \re(\vA+\uC)>0\}.
$$
\end{theorem}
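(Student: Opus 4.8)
The plan is to continue $f$ by contour deformation, letting the Gaussian factor $e^{\tau z^2}$ do the heavy lifting. Write the integrand as
\[
g(z)=e^{\tau z^2}\,\frac{\Gamma\!\left(\tfrac{\vA+z}{2},\tfrac{\vA-z}{2},\tfrac{\uC+z}{2},\tfrac{\uC-z}{2}\right)}{\Gamma(z)\Gamma(-z)},
\]
and note first that $1/(\Gamma(z)\Gamma(-z))=-z\sin(\pi z)/\pi$ is entire, so the only singularities of $g$ are the poles of the numerator, at the \emph{right} set $R=\{\vA+2n,\uC+2n:n\ge0\}$ and the \emph{left} set $L=\{-\vA-2n,-\uC-2n:n\ge0\}$. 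On any vertical line $\re z=c$ the four numerator Gammas decay like $e^{-\pi|\im z|}$ by Stirling, while $1/(\Gamma(z)\Gamma(-z))$ grows only like $|z|\,e^{\pi|\im z|}$, so their product is polynomially bounded in $\im z$; then $|e^{\tau z^2}|=e^{\tau(c^2-(\im z)^2)}$ forces Gaussian decay $e^{-\tau(\im z)^2}$. Hence the integral converges absolutely on every vertical line, and since this bound holds locally uniformly in $(\vA,\uC)$ I may differentiate under the integral sign, so that on $D_0$, where $\i\r$ separates $R$ (to its right) from $L$ (to its left), $f$ is holomorphic.

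Next I continue to $\Omega$. By the manifest symmetry of $g$ under $\vA\leftrightarrow\uC$ it suffices to reach points with $\re\uC>0$, since in $\Omega$ at least one parameter has positive real part. With $\uC$ fixed in the right half-plane the $\uC$-poles stay safely separated by $\i\r$, and only finitely many $\vA$-poles can cross: those $z=\vA+2n$ with $\re(\vA+2n)<0$, together with their mirror images $z=-\vA-2n$ with $\re(-\vA-2n)>0$. For each $(\vA,\uC)$ with $R\cap L=\varnothing$ I replace $\i\r$ by a simple contour $\gamma$ equal to the imaginary axis outside a large disc and weaving inside it so as to keep all of $R$ on its right and all of $L$ on its left; such a $\gamma$ exists precisely because $R$ and $L$ are disjoint discrete sets with the correct asymptotics. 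By Cauchy's theorem $F_\gamma(\vA,\uC)=\tfrac1{2\pi\i}\int_\gamma g\,\d z$ is independent of the admissible choice of $\gamma$, agrees with $f$ on $D_0$, and differs from $\tfrac1{2\pi\i}\int_{\i\r}g\,\d z$ by a finite combination of the residues $\Res_{z=\vA+2n}g$ and $\Res_{z=-\vA-2n}g$ at the crossed poles; each piece is holomorphic wherever the corresponding separation persists. This exhibits $f$ as a holomorphic function on $\Omega\setminus V$, where
\[
V=\bigcup_{k\ge0}\big(\{\vA=-k\}\cup\{\uC=-k\}\big)\cap\Omega
\]
is exactly the locus where $R\cap L\neq\varnothing$: indeed $\vA+2n=-\vA-2m$ forces $\vA=-(n+m)\in\zz_{\le0}$, and $\uC+2n=-\uC-2m$ forces $\uC\in\zz_{\le0}$, while the mixed equalities $\vA+2n=-\uC-2m$ would force $\re(\vA+\uC)\le0$ and are excluded in $\Omega$.

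It remains to cross $V$, and I expect this to be the main obstacle. Along $V$ a pole of $R$ and a pole of $L$ collide at an integer point $z_0$, and a naive contour is pinched there. The key point is that this collision is benign: at an integer the factor $1/(\Gamma(z)\Gamma(-z))=-z\sin(\pi z)/\pi$ vanishes, lowering the order of the merged pole and forcing the apparent poles of the separate integral and residue pieces to cancel. Concretely, one checks that the residues $\Res_{z=\vA+2n}g$ stay finite as $(\vA,\uC)\to V$ and that the pinch contributes no divergence, so that the continuation built above is locally bounded near $V$. Since $V$ is an analytic hypersurface in $\Omega$ and $f$ is holomorphic and locally bounded on $\Omega\setminus V$, the Riemann removable-singularity theorem in $\c^2$ extends $f$ holomorphically across $V$, yielding a function holomorphic on all of $\Omega$. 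The delicate step is precisely this verification of local boundedness at the collision locus — equivalently, that the pinching right and left poles cancel; everything preceding it is the standard Mellin--Barnes contour argument, made rigorous by the Gaussian factor $e^{\tau z^2}$.
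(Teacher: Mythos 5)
Your argument reaches the right conclusion but takes a genuinely different route from the paper, and its pivotal step is asserted rather than proved. The paper never introduces a separating (``weaving'') contour or the exceptional locus $V$: it fixes $\re(\uC)>0$ and walks $\re(\vA)$ downward two units at a time, alternately shifting the contour from $\i\r$ to a vertical line $c_j+\i\r$ and back, so that on each open strip $D_n=\{-n<\re(\vA)<-n+2,\ \re(\vA+\uC)>0\}$ it has an explicit representation of $f$ as an integral over a pole‑free vertical line plus a finite sum of residue terms; those terms are manifestly holomorphic on the whole strip because the offending factor $\Gamma(\vA+k)$ always comes multiplied by $1/(\Gamma(\vA+2k)\Gamma(-\vA-2k))$, and since consecutive strips overlap, the collision points $\vA\in\zz_{\le 0}$ are interior to some $D_n$ and never need separate treatment. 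Your route instead concentrates all of the difficulty at the pinch locus $V$ (which you identify correctly: the mixed collisions $\vA+2n=-\uC-2m$ are indeed excluded by $\re(\vA+\uC)>0$) and proposes to cross it by local boundedness plus the Riemann extension theorem in $\c^2$. That is a legitimate strategy, and your preliminary steps (Gaussian domination on vertical lines, existence of a separating arc when $R\cap L=\varnothing$, independence of the admissible contour) are fine.

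The genuine issue is that the claim on which everything hinges --- that the pinch is benign, so the continuation stays locally bounded near $V$ --- is only announced (``one checks that\dots''). This is exactly where a generic Mellin--Barnes integral fails to continue, so it must be carried out; fortunately it works, and the computation is short. The residue of the integrand at $z=\vA+2n$ equals
\begin{equation*}
-\tfrac{2(-1)^n}{n!}\,e^{\tau(\vA+2n)^2}\,\Gamma\bigl(\vA+n\bigr)\,\Gamma\bigl(\tfrac{\uC+\vA}{2}+n\bigr)\,\Gamma\bigl(\tfrac{\uC-\vA}{2}-n\bigr)\cdot\Bigl(-\tfrac{(\vA+2n)\sin(\pi\vA)}{\pi}\Bigr),
\end{equation*}
and $\Gamma(\vA+n)\sin(\pi\vA)=(-1)^n\pi/\Gamma(1-\vA-n)$ is entire in $\vA$, while the two $\uC$‑dependent Gamma factors stay finite in the regime where this residue is collected (there $\re(\uC-\vA)>2n$ and $\re(\uC+\vA)>0$); the residues at the left poles $z=-\vA-2m$ have the same structure. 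Decomposing $F_\gamma$ near a point of $V$ as an integral over a contour that passes on a fixed side of each collision point plus the residues at the finitely many crossed poles then exhibits $F$ as a finite sum of functions each holomorphic across $V$, which in fact makes the removable‑singularity appeal unnecessary. This is precisely the cancellation the paper performs when it ``removes the singularity at $\vA+k=0,-1,-2,\dots$'' in its residue formulas; without some version of it your proof is incomplete.
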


\begin{proof} Without loss of generality we assume that $\re(\uC)>0$. With $\uC$ fixed, consider $\vA$ in the strip $0<\re(\vA)<\re(\uC)$.
Take any number $c_1$ from the interval
$(\re(\vA),\min(\re(\uC),\re(\vA)+2))$. We shift the contour of integration in
\eqref{eqn1} from $\i \r$ to $c_1+\i \r$ and collect the residue at $z=\vA$ (where the integrand has a simple pole) to obtain the following expression
\begin{align}\label{eqn2}
f(\vA,\uC)&=\frac{1}{2\pi \i} \int_{c_1+\i \r} e^{\tau z^2} \frac{\Gamma((\vA+ z)/2,(\vA- z)/2,(\uC+z)/2,(\uC- z)/2)}
{\Gamma(z)\Gamma(-z)} \d z\\ \nonumber
&\quad +2  e^{\tau \vA^2}\frac{\Gamma((\uC+\vA)/2,(\uC- \vA)/2)}{\Gamma(-\vA)}.
\end{align}

The poles at $\vA, \vA+2$ and  $-\vA$
should not be %
lying on the contour of integration
$c_1+i\r$. This gives us conditions
$\re(\vA)<c_1<\re(\vA)+2$, $-\re(\vA)<c_1<\re(\uC)$, in particular $-\re(\vA)<\re(\vA)+2$, or $-1<\re(\vA)$. The second term  in \eqref{eqn2} is analytic if $\re(\uC \pm \vA)>0$.
We conclude that \eqref{eqn2} gives an analytic continuation of $f(\vA,\uC)$ into the following domain:
$$
\widetilde D_1:=\{(\vA,\uC)\in \c^2 \; : \; -1<\re(\vA)<\re(\uC), \re(\vA)+\re(\uC)>0\}.
$$
Now take $(\vA,\uC)\in \widetilde D_1$ with $\re(\vA)<0$. We shift the contour of integration from $c_1+\i \r$ back to $\i \r$, but now we need to take into account the pole at $z=-\vA$:
\begin{align}\label{eqn2b}
f(\vA,\uC)&=\frac{1}{2\pi \i} \int_{\i \r} e^{\tau z^2} \frac{\Gamma((\vA+ z)/2,(\vA- z)/2,(\uC+z)/2,(\uC- z)/2)}
{\Gamma(z)\Gamma(-z)} \d z\\ \nonumber
&\quad +4  e^{\tau \vA^2}\frac{\Gamma((\uC+\vA)/2,(\uC- \vA)/2)}{\Gamma(-\vA)}.
\end{align}
Again, we cannot   have poles at   $\vA,\vA+2,-\vA-2,-\vA$ lying on
the contour of integration $i\r$, thus we have conditions
$-2< \re(\vA) < 0$ and $\re(\uC)>0$.
Again, the second term in \eqref{eqn2b} is analytic as long as $\re(\uC\pm \vA)>0$. Therefore,
formula \eqref{eqn2b} gives us an analytic continuation of $f(\vA,\uC)$ in the domain
$$
D_2:=\{(\vA,\uC)\in \c^2 \; : \; -2<\re(\vA)<0, \re(\vA)+\re(\uC)>0\}.
$$

Now we repeat this procedure. Take $(\vA,\uC)\in D_2$ such that $-2<\re(\vA)<-1$ and $\re(\vA)+2<\re(\uC)$. Choose any $c_2$ in the interval $(\re(\vA)+2,\min(\re(\uC),\re(\vA)+4))$. Shift the contour of integration in \eqref{eqn2b} from $\i \r$ to $c_2+\i \r$,  and collect the residue at $z=\vA+2$ and we get
\begin{align*}
f(\vA,\uC)&=\frac{1}{2\pi \i} \int_{c_2+\i \r} e^{\tau z^2} \frac{\Gamma((\vA+ z)/2,(\vA- z)/2,(\uC+z)/2,(\uC- z)/2)}
{\Gamma(z)\Gamma(-z)} \d z\\ \nonumber
&\quad +
4  e^{\tau \vA^2} \frac{\Gamma((\uC+\vA)/2,(\uC- \vA)/2)}{\Gamma(-\vA)} \\ \nonumber
&\quad -  2 e^{\tau(\vA+2)^2} \frac{\Gamma(\vA+1,(\uC+\vA+2)/2,(\uC- \vA-2)/2)}{\Gamma(\vA+2)\Gamma(-\vA-2)}.
\end{align*}
We cannot have  poles at $\vA+2$, $\vA+4$, $-\vA-2$, $-\vA$
lying on  the contour of integration $c_2+i\r$, so
the above expression gives us analytic continuation of $f(\vA,\uC)$ into the following domain:
$$
D_3:=\{(\vA,\uC)\in \c^2 \; : \; -3<\re(\vA)<-1, \re(\vA)+\re(\uC)>0\}.
$$
Now take $(\vA,\uC)\in D_3$ such that $-3<\re(\vA)<-2$. We shift the contour of integration from $c_2+\i \r$ back to $\i \r$, but now we need to take into account the pole at $z=-\vA-2$:
\begin{align*}
f(\vA,\uC)&=\frac{1}{2\pi \i} \int_{\i \r} e^{\tau z^2} \frac{\Gamma((\vA+ z)/2,(\vA- z)/2,(\uC+z)/2,(\uC- z)/2)}
{\Gamma(z)\Gamma(-z)} \d z\\ \nonumber
&\quad +
4  e^{\tau \vA^2} \frac{\Gamma((\uC+\vA)/2,(\uC- \vA)/2)}{\Gamma(-\vA)} \\ \nonumber
&\quad -  4 e^{\tau(\vA+2)^2} \frac{\Gamma(\vA+1,(\uC+\vA+2)/2,(\uC- \vA-2)/2)}{\Gamma(\vA+2)\Gamma(-\vA-2)}.
\end{align*}
Again, the poles at $\vA+2,\vA+4,-\vA-4,-\vA-2 $ should not be %
lying on
the contour of integration   $i\r$. So
the above expression gives us analytic continuation of $f(\vA,\uC)$ into the following domain:
$$
D_4:=\{(\vA,\uC)\in \c^2 \; : \; -4<\re(\vA)<-2, \re(\vA)+\re(\uC)>0\}.
$$

After repeating the above procedure, the function $f(\vA,\uC)$ would be analytically continued into domains $D_5$, $D_6$, etc., where we defined
$$
D_n:=\{(\vA,\uC)\in \c^2 \; : \; -n<\re(\vA)<-n+2, \re(\vA)+\re(\uC)>0\}.
$$
Note
$$\widetilde D_1 \cup D_2 \cup D_3 \cup D_4 \cup \dots =
\{(\vA,\uC) \in \c^2 \; : \; \re(\vA)+\re(\uC)>0, \re(\uC)-\re(\vA)>0\},
$$
thus the function $f(\vA,\uC)$ is analytic in the above domain. Since $f(\vA,\uC)=f(\uC,\vA)$
and this function is analytic in
$$
\{(\vA,\uC) \in \c^2 \; : \; \re(\vA)>0, \re(\uC)>0\},
$$
we conclude that $f(\vA,\uC)$ can be analytically continued to a function holomorphic in
$\Omega$.
\end{proof}

The finiteness part of the conclusion of Theorem \ref{Thm-L-Const} now follows.
\begin{corollary}\label{cor:fini}
Let $\tau>0$.
The integral \eqref{C-formula}
is finite for real $\vA,\uC$ such that $\vA+\uC>0$.
\end{corollary}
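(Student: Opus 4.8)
The plan is to collapse the double integral \eqref{C-formula} to a one-dimensional integral of a product of two semigroup images of exponentials, and then to control the two tails by two different estimates. Throughout I use that the kernel $p_\tau$ is strictly positive (Theorem \ref{T-YS}(i)), so the integrand in \eqref{C-formula} is nonnegative; hence $C\topp\tau_{\vA,\uC}$ is well defined with values in $[0,\infty]$, Tonelli's theorem applies with no integrability hypothesis, and all the rearrangements below are legitimate. Since \eqref{C-formula} is symmetric in $\vA$ and $\uC$ and the case $\vA,\uC>0$ has already been settled, I may assume $\vA\le 0<\uC$ with $\vA+\uC>0$. Factoring through the midpoint via the Chapman--Kolmogorov identity $p_\tau=\int_\r p_{\tau/2}(\cdot,z)p_{\tau/2}(z,\cdot)\,\d z$ of Theorem \ref{T-YS}(iii) and applying Tonelli,
\[
C\topp\tau_{\vA,\uC}=\int_\r g_\vA(z)\,g_\uC(z)\,\d z,\qquad g_\beta(z):=\widetilde{\pp}_{\tau/2}[e^{\beta\,\cdot}](z)=\int_\r e^{\beta x}\,p_{\tau/2}(x,z)\,\d x,
\]
so finiteness reduces to showing $g_\vA\,g_\uC\in L_1(\r)$.

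The key estimate is a global exponential bound coming from the Feynman--Kac representation \eqref{P_t-semi}: since the killing factor lies in $[0,1]$ and $e^{\beta w}\ge0$,
\[
g_\beta(z)=\e\Big[e^{-\int_0^{\tau/2}e^{2\sqrt2\,B_s}\,\d s}\,e^{\beta\sqrt2\,B_{\tau/2}}\,\Big|\,B_0=\tfrac{z}{\sqrt2}\Big]\le\e\big[e^{\beta\sqrt2\,B_{\tau/2}}\,\big|\,B_0=\tfrac{z}{\sqrt2}\big]=e^{\beta z+\beta^2\tau/2}
\]
for every real $\beta$ and every $z$. This bound is sharp as $z\to-\infty$ but useless as $z\to+\infty$; there I would instead use the crude bound $p_{\tau/2}(x,z)\le C_\tau K_0(e^x)K_0(e^z)$, obtained from $|K_{\i u}|\le K_0$ and the convergence of $C_\tau:=\int_0^\infty e^{-\tau u^2/2}\mu(\d u)$ (recall \eqref{mu}). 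This yields $g_\uC(z)\le c_\uC K_0(e^z)$ with $c_\uC=C_\tau\int_\r e^{\uC x}K_0(e^x)\,\d x<\infty$ for $\uC>0$ by the standard bounds \eqref{K0-bd1}--\eqref{K0-bd2}.

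Finally I would split $\int_\r=\int_{-\infty}^0+\int_0^\infty$. On $(-\infty,0]$ both exponential bounds apply, giving $g_\vA(z)g_\uC(z)\le e^{(\vA^2+\uC^2)\tau/2}\,e^{(\vA+\uC)z}$, which is integrable at $-\infty$ precisely because $\vA+\uC>0$. On $[0,\infty)$ I use $g_\vA(z)\le e^{\vA^2\tau/2}$ (since $e^{\vA z}\le1$ for $z\ge0$, $\vA\le0$) together with $g_\uC(z)\le c_\uC K_0(e^z)$ and the elementary fact $\int_0^\infty K_0(e^z)\,\d z<\infty$. Adding the two contributions gives $C\topp\tau_{\vA,\uC}<\infty$, which is the assertion.

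The main obstacle is conceptual rather than computational: no single elementary bound on $g_\uC$ is effective on all of $\r$, since the exponential bound degrades at $+\infty$ while the $K_0$-bound degrades at $-\infty$, so one is forced to match each tail with the appropriate estimate. The genuinely delicate region is $z\to-\infty$, where $g_\vA$ blows up; the Feynman--Kac bound is what extracts the exact growth rate $\vA$, and integrability against $g_\uC$ (whose decay rate is $\uC$) holds exactly on the assumed range $\vA+\uC>0$. As a consistency check, this threshold coincides with the strip of holomorphy $\re(\vA+\uC)>0$ furnished by Theorem \ref{thm1}, which is the alternative, purely analytic route to the same conclusion.
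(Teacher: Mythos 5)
Your proof is correct, but it takes a genuinely different route from the paper's. The paper deduces Corollary \ref{cor:fini} in one line from Theorem \ref{thm1}: since $C\topp\tau_{\vA,\uC}=2^{\vA+\uC-3}f(\vA,\uC)$ and $f$ continues analytically to $\re(\vA+\uC)>0$, the classical equivalence between finiteness of a (two-sided) Laplace transform of a positive measure on a real interval and analytic extension to the corresponding complex half-plane (the cited result of Lukacs--Szasz) gives finiteness; a remark also notes a convexity argument combining the quadrant $\vA,\uC>0$ with the strip $0<\vA+\uC<2$ from Theorem \ref{Prop-LapC}. You instead argue directly and probabilistically: Chapman--Kolmogorov at the midpoint reduces \eqref{C-formula} to $\int_\r g_\vA g_\uC$, the Feynman--Kac representation \eqref{P_t-semi} gives the global bound $g_\beta(z)\le e^{\beta z+\beta^2\tau/2}$ by discarding the killing functional, and the crude kernel bound $p_{\tau/2}(x,z)\le C_\tau K_0(e^x)K_0(e^z)$ handles the $z\to+\infty$ tail; matching each tail with the right estimate isolates $\vA+\uC>0$ as exactly the integrability threshold at $-\infty$. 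This is more elementary and self-contained (it needs neither the contour-shifting machinery of Theorem \ref{thm1} nor the Lukacs--Szasz equivalence), and it makes the role of the hypothesis transparent; what it does not give is the explicit evaluation \eqref{C2K}, which is the real payoff of the paper's analytic continuation and is needed later anyway, so the paper gets the corollary essentially for free. The only point to flag is that you use the identification of the integral operator with kernel $p_{\tau/2}$ and the Feynman--Kac semigroup applied to the unbounded function $e^{\beta x}$; this is within the paper's own framework (it explicitly applies $\widetilde\pp_t$ to $e^{ax}$) and extends from bounded continuous functions to nonnegative measurable ones by monotone convergence, with both sides a priori valued in $[0,\infty]$, so no circularity arises.
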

\begin{proof}
 Recall that the condition $C_{\vA,\uC}\topp \tau<\infty$ for all real $-\uC<\vA<\infty$ is equivalent to
the statement that the Laplace transform $\vA\mapsto C_{\vA,\uC}\topp \tau$ has analytic extension to the complex half-plane  $\re(\vA)>-\uC$. (For a
version of this fact in the language of analytic characteristic functions, see
\cite[Theorem 2]{lukacs1952}.)  Since $C\topp \tau_{\vA,\uC}=2^{\vA+\uC-3} f(\vA,\uC)$,
 Theorem \ref{thm1} ends the proof.
\end{proof}
\begin{remark}
Corollary \ref{cor:fini} can also be deduced from the fact that the integral \eqref{C-formula}
is finite in the upper quadrant $\vA,\uC>0$ by \eqref{C-X},  and in the strip $0<\vA+\uC<2$ by Theorem \ref{Prop-LapC}.  So it is finite in the convex hull of the union of these two sets.
\end{remark}
\subsubsection{Proof of formula \eqref{C2K}}
We now consider  $\vA\in\r,\uC>0$ such that $\vA+\uC>0$.
The gamma function $\Gamma(z)$ has simple poles at points $z=-k$, $k=0,1,2,\dots$ with residues at these points given by
$\frac{(-1)^k}{k!}$. Thus with $\vA+2k<0$, and $\vA+k\ne 0, -1,-2,\dots$  the residue of the integrand in \eqref{eqn1} at point $z=\vA+2k$ is given by
\begin{align*}
-2 \frac{(-1)^k}{k!} e^{\tau (\vA+2k)^2} \frac{\Gamma(\vA+k) \Gamma((\uC+\vA)/2+k) \Gamma((\uC-\vA)/2-k)}{\Gamma(\vA+2k) \Gamma(-\vA-2k)}.
\end{align*}
This expression can be simplified to
\begin{align*}
-2 \frac{\Gamma((\uC+\vA)/2,(\uC-\vA)/2)}{\vA\Gamma(-\vA)} e^{\tau(\vA+2k)^2}(\vA+2k)\frac{(\vA,(\uC+\vA)/2)_k}{k!(1+(\vA-\uC)/2)_k},
\end{align*}
removing the singularity at $\vA+k=0,-1,-2,\dots$.
\arxiv{One can use here Euler's reflection formula $\Gamma(z)\Gamma(1-z)=\pi/\sin(\pi z)$, $z\not\in\mathbb{Z}$ and
$
\frac{\Gamma(a+n)}{\Gamma(a)}=(a)_n, \; a\not\in\{\dots,-1,0\}$,
$
\frac{\Gamma(a-n)}{\Gamma(a)}=\frac{(-1)^n}{(1-a)_n}, \;   a\not\in\{\dots,-1, 0,1,\dots,n\},\; n=0,1,\dots
$}

Similarly, the residue at $z=-\vA-2k$ is given by
\begin{align*}
2 \frac{\Gamma((\uC+\vA)/2,(\uC-\vA)/2)}{\vA\Gamma(-\vA)} e^{\tau(\vA+2k)^2}(\vA+2k)\frac{(\vA,(\uC+\vA)/2)_k}{k!(1+(\vA-\uC)/2)_k}.
\end{align*}
Thus if  $-2n<\vA<-2(n-1)$ and $\vA+\uC>0$, we have
\begin{align*}
f(\vA,\uC)&=\frac{1}{2\pi \i} \int_{\i \r} e^{t z^2} \frac{\Gamma((\vA+ z)/2,(\vA- z)/2,(\uC+z)/2,(\uC- z)/2)}
{\Gamma(z)\Gamma(-z)} \d z\\ \nonumber
&\quad+ 4 \frac{\Gamma((\uC+\vA)/2,(\uC-\vA)/2)}{\vA\Gamma(-\vA)}
\sum\limits_{k=0}^{n-1} e^{t(\vA+2k)^2}(\vA+2k)\frac{(\vA,(\uC+\vA)/2)_k}{k!(1+(\vA-\uC)/2)_k}.
\end{align*}
Let us express this identity in a different way. We change variable $z=\i u$ in the integral, use the symmetry of the integrand with respect to $u \mapsto -u$ and obtain
\begin{align}\label{eqn4.5}
f(\vA,\uC)&=\frac{1}{\pi} \int_{0}^{\infty} e^{-\tau u^2} \frac{\Gamma((\vA+ \i u)/2,(\vA- \i u)/2,(\uC+\i u)/2,(\uC- \i u)/2)}
{\Gamma(\i u)\Gamma(-\i u)} \d u\\ \nonumber
&\quad + 4 \frac{\Gamma((\uC+\vA)/2,(\uC-\vA)/2)}{\vA\Gamma(-\vA)}
\sum\limits_{k=0}^{n-1} e^{\tau(\vA+2k)^2}(\vA+2k)\frac{(\vA,(\uC+\vA)/2)_k}{k!(1+(\vA-\uC)/2)_k}.
\end{align}
Thus for real $-2n<\vA<-2(n-1)$ and $\vA+\uC>0$ we have
\begin{multline*}
C\topp \tau_{\vA,\uC}=2^{\vA+\uC-3}f(\vA,\uC) \\=\frac{2^{\vA+\uC-3}}{\pi} \int_{0}^{\infty} e^{-t u^2} \frac{\Gamma((\vA+ \i u)/2,(\vA- \i u)/2,(\uC+\i u)/2,(\uC- \i u)/2)}
{\Gamma(\i u)\Gamma(-\i u)} \d u\\ \nonumber
 \quad + 2^{\vA+\uC-1} \frac{\Gamma((\uC+\vA)/2,(\uC-\vA)/2)}{\vA\Gamma(-\vA)}
\sum\limits_{k=0}^{n-1} e^{t(\vA+2k)^2}(\vA+2k)\frac{(\vA,(\uC+\vA)/2)_k}{k!(1+(\vA-\uC)/2)_k}
\\
=2^{\vA+\uC}(\CabLa+\DabLa),
\end{multline*}
proving \eqref{C2K} for all $\vA\ne 0,-2,-4,\dots$.
It remains to note that by continuity of  finite Laplace transforms, function $\vA\to C\topp \tau_{\vA,\uC}$ is continuous. By the dominated convergence theorem,  we can pass to the limits $a\to-2k$, $k=0,1,\ldots$  under the integral sign   in %
\eqref{eqn4.5}.    Thus \eqref{C2K} holds for all $\vA+\uC>0$.

\section{Proof of Theorem \ref{T1.1}}
\label{Sect:Recipr}
Recall semigroup \eqref{PP} and   (symmetric) Yakubovich heat  kernel \eqref{p_t}.
From the formula for the joint density \eqref{joint-density} it is clear that time reversal   $X_t=Y_{\tau-t}$  preserves the form of the density but swaps the roles of parameters $\vA,\uC$.
We  give the proof for the case $\uC>0$ and construct Markov process $(X_t)$ which will give process $(Y_t)$ by time reversal. (Otherwise, we would use $\vA>0$ in the construction and construct process $(Y_t)$ directly.)

 From the semi-group property of  $\widetilde {\pp }_t$, we see that
\begin{equation*}\label{H}
  H_t(x):= \int_\r p_{\tau-t}(x,y)e^{\uC y}\d y, \quad 0\leq t<\tau,
\end{equation*}
is a positive  space-time harmonic    function, so we can use it as Doob's $h$-transform. We remark that identity  \eqref{K-Mellin2} shows that
 \begin{equation*}
 \label{eq:H_t}
  H_t(x)=
 2^{\uC-2}\int_0^\infty K_{\i u}(e^x)e^{-u^2(\tau-t)}\left\vert \Gamma\left(\frac{\uC+\i u}{2}\right)\right\vert ^2\mu(\d u),\; t<\tau,
 \end{equation*}
with $\mu$ given by \eqref{mu}.
 Additionally, set
$H_\tau(x):=e^{\uC x}$. Then
  for $0\leq s<t\leq \tau$ we define probability measures
\begin{equation}\label{P_st}
  P_{s,t}(x,dy)=
     \frac{H_t(y)}{H_s(x)} p_{t-s}(x,y)\d y.
\end{equation}
 It is clear that $P_{s,t}(x,\d y)$ are transition probabilities of a Markov  %
process
 $(X_t)_{0\leq t\leq \tau}$ with state space $\r$.

 The second real parameter, $\vA$,   enters the initial distribution as follows. By Theorem \ref{Thm-L-Const} and Fubini's theorem,
function  $H_0(x) e^{\vA x}$ is integrable, with the integral
$$\int_\r H_0(x) e^{\vA x}\d x=\int_\r\int_\r e^{\vA x+\uC y} p_{\tau}(x,y)\d x \d y =C\topp \tau_{\vA,\uC}.$$
We normalize this function and take it as  the  initial distribution for $X_0$,
\begin{equation}\label{X-ini}
  \p(X_0\in \d x)=\frac{1}{C\topp \tau_{\vA,\uC}}H_0(x) e^{\vA x}\d x.
\end{equation}
This completes the construction of the Markov process  $(X_t)_{0\leq t\leq \tau}$.

 From   \eqref{P_st} and \eqref{X-ini}, it is clear that  the joint law of $(X_0,X_\tau)$ is  $$\frac{1}{C\topp \tau_{\vA,\uC}}e^{\vA x} p_\tau(x,y)e^{\uC y}\d x \d y.$$
More generally, for $0=t_0<t_1<\dots<t_{d}<t_{d+1}=\tau$, the joint distribution of $(X_{t_0},\dots,X_{t_{d+1}})$ is
\begin{equation}\label{joint-law0}
   \frac{1}{C\topp \tau_{\vA,\uC}} e^{\vA x_0} \d x_0\prod_{j=1}^{d+1} p_{t_j-t_{j-1}}(x_{j-1},x_j) \d x_1\dots \d x_{d}e^{\uC x_{d+1}} \d x_{d+1}.
\end{equation}
This is \eqref{joint-density} except that parameters $\vA,\uC$ are at ``incorrect locations''. We swap them by time-reversal.
A calculation shows that  the finite dimensional distributions of process $Y_t=X_{\tau-t}$ have density \eqref{joint-density}. This proves Theorem \ref{T1.1}. \qed
\arxiv{
To verify that the finite dimensional distributions for  $Y_t=X_{\tau-t}$ are given by density \eqref{joint-density}, fix $d\geq 0$ and
 $t_0<t_1<\dots<t_{d+1}=1$. Let $\bar t_j=1-t_{d+1-j}$ and $\bar x_j=x_{d+1-j}$, $0\leq j\leq d+1$.  We have
  $0=\bar t_0<\bar t_1<\dots<\bar t_d<\bar t_{d+1}=1$
  and the joint density of  $(Y_{t_0},\dots,Y_{t_{d+1}})$ at $\vv x=(x_0,\dots,x_{d+1})$ is the joint density of
  $(X_{\bar t_0},\dots,X_{\bar t_{d+1}})$
 at   $\bar{\vv x}=(\bar x_j)$. We apply \eqref{joint-law0} with  $\{\bar t_j\}$ and $\{\bar x_j\}$.  Using symmetry $p_t(x,y)=p_t(y,x)$, we  get
\begin{align*}
    \frac{1}{C\topp \tau_{\vA,\uC}} e^{\vA \bar x_0} \d \bar x_0 & \prod_{j=1}^{d+1} p_{\bar t_j-\bar t_{j-1}}(\bar x_{j-1},\bar x_j) \d \bar x_1\dots \d \bar x_{d}e^{\uC \bar x_{d+1}} \d \bar x_{d+1}
 \\
& = \frac{1}{C\topp \tau_{\vA,\uC}} e^{\vA   x_{d+1}} \d  x_{d+1} \prod_{j=1}^{d+1} p_{t_{d+2-j}- t_{d+1-j}}( x_{d+2-j}, x_{d+1-j}) \d  x_1\dots \d  x_{d}e^{\uC  x_{0}} \d  x_{0}
  \\
& = \frac{1}{C\topp \tau_{\vA,\uC}} e^{\vA   x_{d+1}} \d  x_{d+1} \prod_{j=1}^{d+1} p_{t_{j}- t_{j-1}}( x_{j}, x_{j-1}) \d  x_1\dots \d  x_{d}e^{\uC  x_{0}} \d  x_{0}
  \\
& = \frac{1}{C\topp \tau_{\vA,\uC}} e^{\vA   x_{d+1}} \d  x_{d+1} \prod_{j=1}^{d+1} p_{t_{j}- t_{j-1}}( x_{j-1}, x_{j}) \d  x_1\dots \d  x_{d}e^{\uC x_{0}} \d  x_{0},
\end{align*}
which is \eqref{joint-density}.
}

 We remark that the construction of process $(X_t)$ is a special case of  the standard construction of a  {\em reciprocal Markov process} (also known as the Schr\"odinger or Bernstein processes \cite{jamison1974reciprocal,jamison1975markov}) corresponding to a sub-Markovian semigroup. A concise and readable overview of this construction %
 with additional references
 appears in the first two sections of \cite{dawson1990schrodinger}.

\section{ Proof of  Theorem \ref{T1.3}}\label{Sec:ProofT1.3}
 The first step is to relate the  normalizing constant $\Kab$ from \eqref{Kab2p0} to the normalizing constant $C_{\vA,\uC}\topp \tau$ from \eqref{C-formula}.

 \begin{proposition} If $\vA+\uC>0$, then the constants \eqref{Kab2p0} %
 and \eqref{C-formula} are related as follows
    \begin{equation}\label{K2K}
   \Kab %
   =\frac{(\vA+\uC)(\vA+\uC+2)}{2^{\vA+\uC+1}}C_{\vA,\uC}\topp \tau. %
 \end{equation}

 \end{proposition}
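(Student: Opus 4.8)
The plan is to evaluate the integral $\int_\r e^{-\tau x}\,\mathfrak p_0(\d x)$ directly from the explicit description of the entrance law in \eqref{their-nu}, first in the quadrant $\vA,\uC>0$ where $\mathfrak p_0$ is absolutely continuous and supported on $(0,\infty)$, and then to propagate the resulting identity to the full cone $\vA+\uC>0$. In the quadrant, after the substitution $x=u^2$ (so that $\d x=2u\,\d u$ and $\sqrt{x}=u$) the density appearing in \eqref{their-nu} turns the integral into a Gaussian-weighted integral of a ratio of squared Gamma functions, namely a constant multiple of $\int_0^\infty e^{-\tau u^2}\,\vert\Gamma(\tfrac{\vA+\i u}{2},\tfrac{\uC+\i u}{2})\vert^2/\vert\Gamma(\i u)\vert^2\,\d u$, which is exactly the integral defining $\CabLa$ in \eqref{C-normalize}. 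Reading off the prefactor from \eqref{their-nu} and using the identity $C\topp\tau_{\vA,\uC}=2^{\vA+\uC}\CabLa$, valid for $\vA,\uC>0$ (the note following Theorem \ref{Thm-L-Const}), one obtains $\Kab=\tfrac{(\vA+\uC)(\vA+\uC+2)}{2}\CabLa=\tfrac{(\vA+\uC)(\vA+\uC+2)}{2^{\vA+\uC+1}}C\topp\tau_{\vA,\uC}$, which is \eqref{K2K} in this quadrant.

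To reach the whole range $\vA+\uC>0$ I would run the same computation while tracking the atomic part of $\mathfrak p_0$. When $\vA<0$ the measure $\mathfrak p_0$ acquires finitely many atoms located at the points $x=-(\vA+2k)^2$ with $\vA+2k<0$; evaluating the weight $e^{-\tau x}$ at these points produces the factors $e^{\tau(\vA+2k)^2}$ occurring in the discrete sum $\DabLa$ of \eqref{D-normalize}. Since the atomic weights prescribed by \eqref{their-nu} are precisely the residues collected when shifting the contour in \eqref{eqn1}, summing the continuous and discrete contributions reproduces $\CabLa+\max\{\DabLa,\DbaLa\}$, and hence, by \eqref{C2K}, the same proportionality constant times $C\topp\tau_{\vA,\uC}$. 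A cleaner alternative is analytic continuation: both $\Kab$ and $C\topp\tau_{\vA,\uC}=2^{\vA+\uC-3}f(\vA,\uC)$ are constant multiples of the function $f(\vA,\uC)$ from \eqref{eqn1}, which Theorem \ref{thm1} shows is holomorphic throughout $\Omega$, so once \eqref{K2K} is established on $\{\vA,\uC>0\}$ it extends to all $\vA+\uC>0$ by continuation in $\vA$ with $\uC$ fixed.

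The main obstacle is the bookkeeping for the atoms: one must check that the atom locations and weights supplied by the Corwin--Knizel entrance law \eqref{their-nu} match, term by term, the residues generated by the contour shifts in the proof of Theorem \ref{Thm-L-Const}, carrying along the same constant $\tfrac{(\vA+\uC)(\vA+\uC+2)}{2}$. A secondary technical point is that $\Kab$ is introduced in \cite{CorwinKnizel2021} through a limiting procedure, so one should verify that $\int_\r e^{-\tau x}\,\mathfrak p_0(\d x)$ converges absolutely against the explicit \eqref{their-nu} and equals that limit; absolute convergence of the continuous part follows from the same Gamma-function asymptotics used to control $\CabLa$, and the atomic part is a finite sum, so this step is routine once the continuation argument above is in place.
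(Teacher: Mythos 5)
Your proposal is correct and follows essentially the same route as the paper: the paper likewise evaluates $\int_\r e^{-\tau x}\,\mathfrak p_0(\d x)$ directly from the explicit entrance law \eqref{their-nu}, substitutes $u=\sqrt{x}$ in the continuous part to produce $\tfrac{(\vA+\uC)(\vA+\uC+2)}{2}\CabLa$, matches the atomic sum with $\tfrac{(\vA+\uC)(\vA+\uC+2)}{2}\DabLa$, and then invokes \eqref{C2K} --- it simply does this in one pass for all $\vA$ with $\uC>0$ (reducing to that case by symmetry) rather than treating the quadrant $\vA,\uC>0$ first. One small caution about your ``cleaner alternative'': analytic continuation in $\vA$ would require knowing a priori that $\vA\mapsto\Kab$, as defined through \eqref{their-nu}, is holomorphic across $\vA=0,-2,\dots$, which is precisely what the residue/atom bookkeeping establishes, so that alternative does not actually avoid the term-by-term matching.
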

 \begin{proof} By symmetry, without loss of generality we assume $\uC>0$.
  From \eqref{their-nu} we see that the normalizing constant \eqref{Kab2p0}, compare \cite[(1.12)]{CorwinKnizel2021}, is
  \begin{align*}
   \Kab& =  \int_\r e^{-\tau x} \mathfrak p_0(\d x)\\
 &  =
   \frac{(\vA+\uC)(\vA+\uC+2)}{16\pi}\int_0^\infty e^{-\tau x}  \frac{\vert \Gamma(\frac{\vA+\i \sqrt{x}}{2},\frac{\uC+\i \sqrt{x}}{2} )\vert ^2}{2\sqrt{x}\vert \Gamma(\i \sqrt{x})\vert ^2}\d x
     \\&+    \sum_{\{j\ge 0:\; 2j+\vA<0\}}e^{\tau(\vA+2j)^2} p_j(0)\nonumber
   \\
&
  =    \frac{(\vA+\uC)(\vA+\uC+2)}{16\pi}\int_0^\infty e^{-\tau x}  \frac{\vert \Gamma(\frac{\vA+\i \sqrt{x}}{2},\frac{\uC+\i \sqrt{x}}{2} )\vert ^2}{2\sqrt{x}\vert \Gamma(\i \sqrt{x})\vert ^2}\d x\nonumber\\
& \quad   +     \frac{(\vA+\uC)(\vA+\uC+2)}{4}  \frac{\Gamma(\frac{\uC-\vA}{2},\frac{\vA+\uC}{2})}{\vA\Gamma(-\vA)} \sum_{\{j\ge 0:\; 2j+\vA<0\}} e^{\tau(\vA+2j)^2} \frac{(\vA+2j)(\vA,\frac{\vA+\uC}{2})_j}{ j!(1+\frac{\vA-\uC}{2})_j}.\nonumber
  \end{align*}
  Substituting a new variable for $\sqrt{x}$ in the integral and invoking formulas \eqref{C2K}-\eqref{D-normalize}, we get  \eqref{K2K}.
 \end{proof}

\begin{proof}[Proof of Theorem \ref{T1.3}]
 With $t_d=t_{d+1}=1$ in \eqref{psi},
we have
\begin{align*}
  \psi\topp \tau(\vv s,\vv t)&=\frac{1}{\Kab}\int_\r \e\left[\exp\left(-\tau \sum_{k=1}^{d}(t_k-t_{k-1})\mathbb{T}_{s_k}\right)\middle\vert \mathbb{T}_{0}=x\right]\mathfrak p_{0}(\d x)
\\  =&
  \frac{1}{\Kab}\int_\r \e\left[\exp\left(-\tau \sum_{k=1}^{d}(t_k-t_{k-1})\mathbb{T}_{s_k}\right)\middle\vert \mathbb{T}_{s_{d}}=y\right]\int_\r\mathfrak p_{0,s_d}(x, \d y) \mathfrak p_{0}(\d x).
\end{align*}
 From \eqref{p-inv}, we see that
\begin{equation*}\label{psi+}
  \psi\topp\tau(\vv s,\vv t)=%
  \int_\r \e\left[\exp\left(-\tau \sum_{k=1}^{d}(t_k-t_{k-1})
  \mathbb{T}_{s_k}\right)\middle\vert \mathbb{T}_{s_d}=x\right]
\frac{1}{\Kab}\mathfrak p_{s_d}(\d x). \end{equation*}
Since $s_d>-\vA$, from \eqref{their-nu} we see that measure $\mathfrak p_{s_d}(\d x)$ is absolutely continuous. Furthermore,  comparing \eqref{K2K} with \eqref{their-nu},  we get
\begin{equation*}
  \label{p2phi}
  \frac{1}{\Kab}\mathfrak p_s(\d x)=\widetilde {\mathfrak p}_s(x)\d x, \; -\vA<s<\uC,
\end{equation*}
where $\widetilde {\mathfrak p}_s$ is given by \eqref{tilde-mathfrak-p}.
So the left hand side of  \eqref{psi-dual} is the same as the left hand side of  \eqref{Dual-1}, with  $d-1\geq 0$ instead of $d$.
By the Abel transformation
\begin{equation*}
  \label{Abel}
  \sum\limits_{k=1}^{d}  a_{k} (b_{k}-{b_{k-1}})=\sum\limits_{k=1}^{d-1}  (a_{k}-a_{k+1}) (b_{k}-b_{0})+a_{d}(b_{d}-b_{0}),
\end{equation*}
applied to the right hand side of \eqref{Dual-1}, we get \eqref{psi-dual}.

\end{proof}

\section{The pair of dual   semigroups and  dual representation for the Laplace transforms}
 \label{Sect: DMS}

This section uses the Kontorovich-Lebedev transform $\kk$ to  explain the dual representation for the Laplace transforms in Theorem \ref{T1.2} by relating
 processes $(Z_t)$, introduced in Section \ref{Sect:SecondProof},  to process  $(X_t)$, introduced in Section \ref{Sect:Recipr}. We discuss how the semigroups of these two processes are connected, and then we use  Kontorovich-Lebedev transform to relate their Laplace transforms.
 \subsection{Dual semigroups}
 Recall that in   Section \ref{Sect:Recipr} we used the sub-Markovian semi-group $(\widetilde \pp_t)_{t>0}$ defined by \eqref{PP}
 to construct the Markov
 transition operators
  $\pp_{s,t}=\frac{1}{H_s}\widetilde\pp_{t-s}H_t$, i.e.,
\begin{equation}\label{P-H}
\pp _{s,t}[f]=\frac{1}{H_s}\widetilde {\pp }_{t-s}[H_t  f] , \quad 0\leq s<t\leq \tau,
\end{equation}
with  $H_t=\kk^{-1}\ee^{-u^2(T-t)}\kk e^{\uC x}$.

Now we use the Kontorovich-Lebedev transform  to define another semigroup, acting on  $L_2((0,\infty),\mu(\d u))$ by reversing the %
roles of $\kk^{-1}$ and $\kk$. For $s>0$, let
\begin{equation}\label{tilde Q}
\widetilde {\qq}_s={\mathcal K} e^{sx} {\mathcal K}^{-1},
\end{equation}
where $e^{sx}$ is a multiplication operator on $L_2((0,\infty), \d x)$.
Using the integral identity \eqref{K-Melin}
we find that the operator $\widetilde{\qq}_t$ has kernel
\begin{equation}\label{q_t}
\widetilde q_t(u,v)=  \frac{2^{t}}{4\pi\Gamma(t)\vert \Gamma(\i v)\vert ^2}
\vert \Gamma((t+\i (u+v))/2,(t+\i (u-v))/2)\vert ^2,
\end{equation}
i.e., $\widetilde \qq_t[f](u)=\int \widetilde q_t (u,v) f(v) \d v$. %
Note the difference in notation with  tilde for the kernel  $\widetilde q_t$ of $\widetilde \qq_t$   and no tilde for $q_{s,t}$  in \eqref{ICAK-q}. %

It is clear that   $(\widetilde \qq_s)_{s>0}$ satisfies the semigroup property
$\widetilde {\qq}_{t+s}=\widetilde {\qq}_t \widetilde{\qq}_s$  %
on $L_2(\d \mu)$, and the  kernel is positive. However, there is a major problem in that $\widetilde {\qq}_t 1 = +\infty$ for all $t>0$. In other words, the measures $\widetilde q_t(u,v) \d v$ are not probability measures. So we will need to do some further work to turn $\widetilde {\qq}_t$ into a Markov semigroup.

To define Markov semigroup $\qq_{s,t}$, we  proceed as in \eqref{P-H}. For $s<\uC$ and $u>0$ we introduce the functions
\begin{equation}\label{h_s}
  h_s(u)= \mathcal{K} [e^{(\uC-s) x}](u) = 2^{\uC-s-2} \vert \Gamma(\tfrac{\uC-s+\i u}{2})\vert ^2,
\end{equation}
where we used identity \eqref{K-Mellin2}.
For $s<t<\uC$ we introduce operators
\begin{equation*}\label{Qst0}
{\qq}_{s,t}=\frac{1}{h_{s}} \widetilde {\qq}_{t-s} h_{t}.
\end{equation*}
  Informally,  one can think of ${\qq}_{s,t}$ as Doob's $h$-transform of $\widetilde {\qq}_t$.
 Now it is clear that operators ${\qq}_{s,t}$ have an integral kernel
\begin{align*} \label{QH2}
q_{s,t}(u,v)&=\frac{h_{t}(v)}{h_{s}(u)} \widetilde q_{t-s}(u,v)\\ \nonumber
&=\frac{\vert \Gamma((\uC-t+\i v)/2,(t-s+\i (u+v))/2,(t-s+\i (u-v))/2)\vert ^2}{4\pi\Gamma(t-s)\vert \Gamma((\uC-s+\i u)/2,\i v)\vert ^2} ,
\end{align*}
matching  \eqref{ICAK-q}.

 The following proposition  says that  $q_{s,t}(u,v)$ is a kernel of a Markov semigroup and  it  is known; we   include its proof  in  Appendix \ref{Sect:SemiQ} for completeness.
\begin{proposition}\label{Prop-Semi}
 ${\qq}_{s,t} 1=1$, and for $t_1<t_2<t_3<\uC$ we have
$$
{\qq}_{t_1,t_2} {\qq}_{t_2,t_3} = {\qq}_{t_1,t_3}.
$$
\end{proposition}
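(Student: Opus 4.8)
The plan is to prove both assertions at the level of operators, exploiting the factorization that is already built into the kernel $q_{s,t}(u,v)=\frac{h_t(v)}{h_s(u)}\,\widetilde q_{t-s}(u,v)$, namely $\qq_{s,t}[f]=\frac1{h_s}\,\widetilde{\qq}_{t-s}[h_t f]$. This reduces everything to two facts about the intertwining operator $\widetilde{\qq}_s=\mathcal{K}e^{sx}\mathcal{K}^{-1}$ of \eqref{tilde Q}: the semigroup identity $\widetilde{\qq}_a\widetilde{\qq}_b=\widetilde{\qq}_{a+b}$, and the space--time harmonicity of $\{h_t\}$ in the form $\widetilde{\qq}_{t-s}[h_t]=h_s$.

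First I would treat $\qq_{s,t}1=1$. Since $\int_0^\infty q_{s,t}(u,v)\,\d v=\frac1{h_s(u)}\,\widetilde{\qq}_{t-s}[h_t](u)$, this is equivalent to $\widetilde{\qq}_{t-s}[h_t]=h_s$. Writing $h_t=\mathcal{K}[e^{(\uC-t)x}]$ from \eqref{h_s}, the computation reads
\[
\widetilde{\qq}_{t-s}[h_t]=\mathcal{K}e^{(t-s)x}\mathcal{K}^{-1}\mathcal{K}[e^{(\uC-t)x}]=\mathcal{K}[e^{(t-s)x}e^{(\uC-t)x}]=\mathcal{K}[e^{(\uC-s)x}]=h_s,
\]
where the final equality is \eqref{h_s} and uses $\uC-s>0$. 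The only nontrivial step is the inversion $\mathcal{K}^{-1}\mathcal{K}[e^{(\uC-t)x}]=e^{(\uC-t)x}$, which one expects to be legitimate because $e^{(\uC-t)x}\in L_1(K_0)$ for $\uC-t>0$. For the semigroup property the same factorization makes the inner factors cancel,
\[
\qq_{t_1,t_2}\qq_{t_2,t_3}[f]=\frac1{h_{t_1}}\widetilde{\qq}_{t_2-t_1}\!\left[h_{t_2}\cdot\tfrac1{h_{t_2}}\widetilde{\qq}_{t_3-t_2}[h_{t_3}f]\right]=\frac1{h_{t_1}}\widetilde{\qq}_{t_2-t_1}\widetilde{\qq}_{t_3-t_2}[h_{t_3}f],
\]
so it suffices to know $\widetilde{\qq}_{t_2-t_1}\widetilde{\qq}_{t_3-t_2}=\widetilde{\qq}_{t_3-t_1}$ on $h_{t_3}f$, i.e. $\mathcal{K}e^{ax}\mathcal{K}^{-1}\mathcal{K}e^{bx}\mathcal{K}^{-1}=\mathcal{K}e^{(a+b)x}\mathcal{K}^{-1}$, which is once more the inversion $\mathcal{K}^{-1}\mathcal{K}=\mathrm{id}$ followed by $e^{ax}e^{bx}=e^{(a+b)x}$.

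The hard part is to make the inversion $\mathcal{K}^{-1}\mathcal{K}=\mathrm{id}$ rigorous on these functions, since neither $h_t$ nor the exponentials $e^{ax}$ lie in $L_2(\r,\d x)$; in fact $\mathcal{K}[e^{ax}]=2^{a-2}|\Gamma(\tfrac{a+\i u}{2})|^2\notin L_1(\d\mu)$, so $\mathcal{K}^{-1}$ is not defined on it in the elementary sense of \eqref{K-inv}. Equivalently, unfolding $\widetilde{\qq}_{t-s}[h_t]$ into the triple integral
\[
\int_0^\infty\!\!\left[\int_\r e^{(t-s)y}K_{\i u}(e^y)K_{\i v}(e^y)\,\d y\right]\!\left[\int_\r e^{(\uC-t)x}K_{\i v}(e^x)\,\d x\right]\mu(\d v)
\]
and integrating in $v$ first through the reproducing relation $\int_0^\infty K_{\i v}(e^x)K_{\i v}(e^y)\,\mu(\d v)=\delta(x-y)$ yields $h_s(u)$ transparently, but this inner $v$-integral is only \emph{conditionally} convergent, so Fubini is not available with absolute values.

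I would resolve this by regularization: insert the Yakubovich multiplier $e^{-\eps u^2}$ into the $\mu$-integral (equivalently, mollify $e^{ax}$ into $L_2(\r,\d x)$), apply the genuine $L_2$ inversion of $\mathcal{K}$ together with the pointwise Mellin--Barnes identities \eqref{K-Melin} and \eqref{K-Mellin2} in the regime where absolute convergence now holds by the $K_0$-bounds \eqref{K0-bd1} and \eqref{K0-bd2}, and then pass to the limit $\eps\downarrow 0$ by dominated convergence, using the strong continuity of the semigroup $\widetilde{\pp}_\eps=\mathcal{K}^{-1}e^{-\eps u^2}\mathcal{K}$ at $\eps=0$. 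A clean alternative, which bypasses the inversion altogether, is to evaluate the single integral $\int_0^\infty\widetilde q_{t-s}(u,v)h_t(v)\,\d v$ directly as a de Branges--Wilson type gamma-function integral whose value is exactly $h_s(u)$, and to verify the Chapman--Kolmogorov identity $\int_0^\infty q_{t_1,t_2}(u,w)q_{t_2,t_3}(w,v)\,\d w=q_{t_1,t_3}(u,v)$ by the corresponding known convolution of these kernels; this is the computation I would relegate to the appendix.
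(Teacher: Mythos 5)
Your proposal is correct in substance, but your primary route is not the one the paper takes --- and the route the paper does take is precisely the ``clean alternative'' you relegate to a closing remark. The paper proves $\qq_{s,t}1=1$ by evaluating $\int_0^\infty q_{s,t}(u,v)\,\d v$ directly via Askey's Beta integral \eqref{WI0} for the continuous dual Hahn weight, and proves Chapman--Kolmogorov by the de~Branges--Wilson integral; both are single absolutely convergent gamma-function integrals (note that $\widetilde q_{t-s}(u,v)h_t(v)$ decays like $v^{\,2(t-s)+\uC-t-1}e^{-\pi v/2}$, so no interchange of integrals is ever needed). Your operator-theoretic route via $\qq_{s,t}=\frac1{h_s}\widetilde\qq_{t-s}h_t$ is conceptually illuminating --- it explains \emph{why} the identities hold, namely $\widetilde\qq_a\widetilde\qq_b=\widetilde\qq_{a+b}$ and $\widetilde\qq_{t-s}[h_t]=h_s$, both formally reducing to $\kk^{-1}\kk=\mathrm{id}$ --- and you correctly diagnose the obstruction: $\kk[e^{ax}]=2^{a-2}\vert\Gamma(\tfrac{a+\i u}{2})\vert^2$ is not in $L_1(\d\mu)$ (the product with $\mu(\d u)$ grows like $u^{a}e^{\pi u/2}$), so neither the $L_2$ isometry nor Lemma \ref{L0} applies, and the inner $v$-integral is only a distributional reproducing identity. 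However, the regularization you sketch to repair this (inserting $e^{-\eps u^2}$ and passing to the limit) is left unproved at exactly the delicate point: you would need a dominating bound for $\widetilde\pp_\eps[e^{ax}]$ uniform in $\eps$, or strong continuity of $\widetilde\pp_\eps$ at $\eps=0$ on exponentials outside $L_2(\r,\d x)$, neither of which is established in the paper or in your sketch. As written, the burden of proof simply shifts onto that limit interchange. The direct gamma-integral evaluation avoids all of this, which is why the paper adopts it; I would recommend you promote your ``alternative'' to the actual proof and keep the operator factorization as motivation.
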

So operators $\qq_{s,t}$ are well defined  Markov transition operators for all real $-\infty<s<t<\uC$.
To define the ``initial distributions" of Markov process $(Z_s)$, for  $-\vA<s<\uC$, we   let
 \begin{equation}
   \label{gsu}
   g_s(u) = \kk[ e^{(s+\vA) x}](u) =2^{\vA+s-2} \vert \Gamma(\tfrac{\vA+s+\i u}{2})\vert ^2
 \end{equation}
(recall \eqref{K-Mellin2}) and we introduce the family of $\sigma$-finite positive measures
\begin{equation}
  \label{Z-ini}
\widetilde\nu_s(\d u)=\frac{1}{C_{\vA,\uC}\topp \tau}h_{s}(u)g_{s}(u) \mu(\d u)
=
\frac{2^{\vA+\uC}}{8\pi C_{\vA,\uC}\topp\tau}
 \frac{\vert \Gamma(\tfrac{\vA+s+\i u}{2},\tfrac{\uC-s+\i u}{2})\vert ^2}{\vert \Gamma(\i u)\vert ^2}\d u = \varphi_s(u)\d u.
\end{equation}
The   normalization of the  infinite measure $\widetilde\nu_s(\d u)$ is chosen to match \eqref{X-ini},  %
explaining the normalization in
\eqref{nu_s}.

It is known that measures $\widetilde \nu_s$ are %
 the entrance laws for $\qq_{s,t}$,
compare \cite[Lemma 7.11]{CorwinKnizel2021} and \eqref{p-inv}.
\begin{lemma}\label{Rem:Yizao}  For  $-\vA<s<t<\uC$
$$
 \widetilde \nu_s\qq_{s,t}=\widetilde \nu_t.
$$
Equivalently,
  $$\int_0^\infty q_{s,t}(u,v)g_s(u)h_s(u)\mu(\d u) \d v=g_t(v)h_t(v)\mu(\d v).$$
\end{lemma}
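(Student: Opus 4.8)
The plan is to collapse the displayed integral identity into a single statement about the dual semigroup $\widetilde\qq$ and then to prove that statement by regularizing the Kontorovich--Lebedev inversion. First I would substitute the definition $\qq_{s,t}=h_s^{-1}\widetilde\qq_{t-s}h_t$ together with \eqref{Z-ini} into the left-hand side. Comparing \eqref{q_t} with \eqref{K-Melin}, the kernel of $\widetilde\qq_\sigma$ factors as $\widetilde q_\sigma(u,v)=m(v)\,b_\sigma(u,v)$, where $\mu(\d v)=m(v)\,\d v$ with $m(v)=\tfrac{2}{\pi|\Gamma(\i v)|^2}$ and $b_\sigma(u,v)=\int_\r e^{\sigma x}K_{\i u}(e^x)K_{\i v}(e^x)\,\d x$ is symmetric in $u,v$. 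Using $q_{s,t}(u,v)=(h_t(v)/h_s(u))\widetilde q_{t-s}(u,v)$ and this symmetry, the $u$-integral $\int_0^\infty q_{s,t}(u,v)g_s(u)h_s(u)\,\mu(\d u)$ collapses to $h_t(v)\,m(v)\,\widetilde\qq_{t-s}[g_s](v)$, while the right-hand side is $g_t(v)h_t(v)\,\mu(\d v)=g_t(v)h_t(v)m(v)\,\d v$. Hence the whole lemma is equivalent to the single relation
\[
\widetilde\qq_{t-s}[g_s]=g_t,\qquad -\vA<s<t<\uC.
\]

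This relation is morally immediate: since $\widetilde\qq_\sigma=\kk\,e^{\sigma x}\,\kk^{-1}$ and, by \eqref{gsu}, $g_s=\kk[e^{(\vA+s)x}]$, one formally has $\widetilde\qq_{t-s}[g_s]=\kk\,e^{(t-s)x}\,\kk^{-1}\kk[e^{(\vA+s)x}]=\kk[e^{(\vA+t)x}]=g_t$. The difficulty, and the main obstacle, is that $e^{(\vA+s)x}\notin L_2(\r,\d x)$, so the inversion $\kk^{-1}\kk[e^{(\vA+s)x}]=e^{(\vA+s)x}$ is not covered by the $L_2$ isometry; in fact $g_s\notin L_1(\d\mu)$ and the corresponding inversion integral converges only conditionally. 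I would therefore make the argument rigorous through the heat-kernel regularization $e^{-\eps u^2}$ built from \eqref{p_t}.

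Concretely I would set $I_\eps(v)=\int_0^\infty b_{t-s}(u,v)\,g_s(u)\,e^{-\eps u^2}\mu(\d u)$, which tends to $\widetilde\qq_{t-s}[g_s](v)$ as $\eps\downarrow0$ by dominated convergence, the limiting integrand being in $L_1(\d\mu)$ because Stirling's asymptotics for the Gamma factors force $b_{t-s}(u,v)g_s(u)\,\mu(\d u)$ to decay like $e^{-\pi u/2}$. Inserting $g_s(u)=\int_\r e^{(\vA+s)x}K_{\i u}(e^x)\d x$ (from \eqref{K-Mellin2}) and $b_{t-s}(u,v)=\int_\r e^{(t-s)y}K_{\i u}(e^y)K_{\i v}(e^y)\d y$ (from \eqref{K-Melin}), the factor $e^{-\eps u^2}$ makes the triple integral absolutely convergent, using $|K_{\i u}|\le K_0$ and $\int_0^\infty e^{-\eps u^2}\mu(\d u)<\infty$, so Fubini applies; performing the $u$-integral first recognizes the Yakubovich kernel $p_\eps(x,y)=\int_0^\infty e^{-\eps u^2}K_{\i u}(e^x)K_{\i u}(e^y)\mu(\d u)$ and gives
\[
I_\eps(v)=\int_\r e^{(t-s)y}K_{\i v}(e^y)\,\widetilde\pp_\eps\bigl[e^{(\vA+s)\,\cdot}\bigr](y)\,\d y.
\]
Finally I would let $\eps\downarrow0$: by the probabilistic representation \eqref{P_t-semi}, $\widetilde\pp_\eps[e^{a\,\cdot}](y)\to e^{ay}$ pointwise, with the uniform Gaussian domination $\widetilde\pp_\eps[e^{a\,\cdot}](y)\le e^{a^2\eps}e^{ay}$, so dominated convergence yields $\widetilde\qq_{t-s}[g_s](v)=\int_\r e^{(\vA+t)y}K_{\i v}(e^y)\,\d y=g_t(v)$ (here $\vA+t>0$). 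All the care goes into justifying the two limits in $\eps$ and the convergence $\widetilde\pp_\eps[e^{a\,\cdot}]\to e^{a\,\cdot}$ for the non-$L_2$ exponential, for which the killed-Brownian-motion representation \eqref{P_t-semi} is the most transparent tool.
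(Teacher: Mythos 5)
Your proposal is correct, and after the common first step it takes a genuinely different route from the paper. Both arguments begin the same way: using $q_{s,t}(u,v)=\tfrac{h_t(v)}{h_s(u)}\widetilde q_{t-s}(u,v)$ and the symmetry of $\vert\Gamma(\i v)\vert^2\,\widetilde q_\sigma(u,v)$ in $(u,v)$, the lemma collapses to showing that a single remaining $u$-integral has the right value. At that point the paper deliberately \emph{avoids} the associativity question you attack: it writes the residual integral out explicitly and recognizes it as $\int_0^\infty q\,\d u$ for the continuous dual Hahn kernel \eqref{ICAK-q} in a different parametrization (with $\uC$ traded for $-\vA$ and the time roles reflected), which equals $1$ by the beta integral \eqref{WI0} behind Proposition \ref{Prop-Semi}. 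You instead prove the operator identity $\widetilde\qq_{t-s}[g_s]=g_t$ head-on, regularizing by $e^{-\eps u^2}$, recognizing the Yakubovich kernel $p_\eps$, and controlling the two limits via the killed-Brownian-motion representation \eqref{P_t-semi} with the Gaussian domination $\widetilde\pp_\eps[e^{a\,\cdot}](y)\le e^{a^2\eps}e^{ay}$. The trade-off: the paper's route is shorter and rests on one known special-function evaluation already used for $\qq_{s,t}1=1$; yours is longer but self-contained, does not reuse the Askey/de Branges--Wilson integral, and rigorously justifies the formal computation $\kk\,e^{(t-s)x}\,\kk^{-1}\kk[e^{(\vA+s)x}]=\kk[e^{(\vA+t)x}]$ outside the $L_2$ framework --- exactly the associativity issue the paper sidesteps. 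One minor quibble: for your first limit $I_\eps(v)\to\int_0^\infty b_{t-s}(u,v)g_s(u)\,\mu(\d u)$ you do not even need the Stirling estimate, since the integrand is nonnegative and increases as $\eps\downarrow 0$, so monotone convergence applies and finiteness of the limit comes out of the computation itself.
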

\begin{proof}
   We give a proof for completeness.
Using the symmetry of \eqref{q_t} with respect to $u,v$ and explicit expression \eqref{mu}, the density on left hand side is
\begin{align*}
 \int_0^\infty h_s(u)q_{s,t}(u,v)g_s(u)\mu(\d u)&  =
  \frac{h_t(v)}{\vert \Gamma(\i v)\vert ^2} \int_0^\infty \widetilde \vert \Gamma(\i v)\vert ^2 \widetilde q_{t-s}(u,v)g_s(u)\mu(\d u)
\\
&=\frac{2}{\pi} \frac{h_t(v)g_t(v)} {\vert \Gamma(\i v)\vert ^2}  \int_0^\infty  \frac{1}{g_t(v)}\frac{\vert \Gamma(\i v)\vert ^2\widetilde q_{t-s}(u,v)}{\vert \Gamma(\i u)\vert ^2}g_s(u)\d u
\\
&=\frac{2}{\pi} \frac{h_t(v)g_t(v)}{\vert \Gamma(\i v)\vert ^2} \int_0^\infty  \frac{1}{g_t(v)}\widetilde q_{t-s}(v,u)  g_s(u) \d u.
\end{align*}
Formal calculations indicate that the last integral should be one, as it is equal to
$$\frac{1}{g_t}\left(\kk e^{(t-s)x}\kk^{-1}\right)\kk [e^{(s+\vA)x}].$$
To avoid justification of the associative law in this setting,  we just write down the integral explicitly
$$
\int_0^\infty \frac{\vert \Gamma(\frac{\vA+s+\i u}{2})\vert ^2\vert \Gamma((t-s+\i (u+v))/2,(t-s+\i (u-v))/2)\vert ^2}{4\pi \Gamma(t-s)\vert \Gamma(\frac{\vA+t+\i v}{2},\i u)\vert ^2} \d u,
$$
and note that its value is indeed 1, as this is the integral of the kernel \eqref{ICAK-q} (in a different parametrization).
\end{proof}

\label{Sec:Z}
 To summarize, in this section we re-introduced Markov
 processes
$(Z_s)_{-\vA<s<\uC}$ with state space $(0,\infty)$. For $-\infty<s<t<\uC$, the transition probabilities $q_{s,t}(u,v)dv$ depend on parameter $\uC$ and are given
by \eqref{ICAK-q}. The $\sigma$-finite measures $\widetilde\nu_{s}(\d u)=\varphi_{s}(u)\d u$ for $s\in(-\vA,\uC)$ are the entrance laws for this %
 family of transition probabilities.

\subsection{Dual representations for the Laplace transforms}\label{Sect:DualReps}
As we mentioned in  the introduction, the goal of   a dual representation   is to exchange the arguments of the multipoint Laplace transform and the time arguments of the processes.   Our first  %
  dual representation
is conditional, so it does not rely on the choice of the initial distributions for the processes $(X_t)$ and $(Z_s)$. It serves as a lemma for the second %
  dual representation, but it is interesting in its own right.

\begin{theorem}\label{T1}  Let $(X_t)_{t\in[0,\tau]}$ be the Markov process
with finite dimensional distributions \eqref{joint-law0} and let  $(Z_s)_{s\in(-\infty, \uC)}$ be the Markov %
 process
described %
 at the end of Section  \ref{Sec:Z}.
Let $-\infty<s_0<s_1<\dots<s_{n}<\uC$   and $0=t_0<t_1<t_2<\dots<t_{n}<t_{n+1}=\tau$.
 For $x\in\r$ define
\begin{equation}\label{F-W}
  \mathsf  F  (x)=    \e \left[\exp\left(-\sum\limits_{k=0}^n s_{k}(X_{t_{k+1}}-X_{t_k})\right)\middle \vert  X_0=x  \right]
\end{equation}
and for $u>0$ let
\begin{equation}
  \label{Gu}
  \mathsf  G(u)  =\e\left[
    \exp\left(-\sum_{k=1}^n (t_{k+1}-t_k)Z_{s_k}^2\right) \middle \vert {Z_{s_0}=u}  \right].
\end{equation}
Then $ e^{-t_1u^2}h_{s_0}(u) \mathsf G (u)$ is in $L_1(\d \mu)$ and
   \begin{equation}\label{Alexeys-F-G}
e^{-s_0 x} H_{0}(x)  \mathsf  F(x)=\kk^{-1}\left[  e^{-t_1u^2}h_{s_0}(u) \mathsf G(u)\right](x).
\end{equation}
\end{theorem}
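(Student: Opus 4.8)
The plan is to expand both sides of \eqref{Alexeys-F-G} as explicit iterated integrals over the increments of the two processes and then to match them term by term using the Kontorovich--Lebedev identities \eqref{K-Melin} and \eqref{K-Mellin2}. Conceptually, the identity is the kernel-level expression of the duality between the semigroups: morally, since $\widetilde{\pp}_a=\kk^{-1}e^{-au^2}\kk$ and $\widetilde{\qq}_b=\kk e^{bx}\kk^{-1}$, the right-hand side $\kk^{-1}\big[e^{-t_1u^2}h_{s_0}\mathsf G\big]$ should collapse, after cancelling adjacent $\kk\kk^{-1}$ pairs, into an alternating composition of heat operators $\widetilde{\pp}$ and multiplications by $e^{(s_k-s_{k-1})x}$, which is exactly $H_0\mathsf F$. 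Because all these transforms are being applied \emph{outside} $L_2(\r,\d x)$, the formal cancellation $\kk\kk^{-1}=\mathrm{id}$ is not licensed, so I would carry everything out at the level of kernels, where every step is justified by Fubini.

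First I would expand the left-hand side. Conditioning on $X_0=x$ and using the transition densities $\frac{H_{t_{k+1}}(x_{k+1})}{H_{t_k}(x_k)}p_{t_{k+1}-t_k}(x_k,x_{k+1})$ from \eqref{P_st}, the factors $H_{t_k}$ telescope, leaving $H_{t_{n+1}}(x_{n+1})/H_{t_0}(x_0)=e^{\uC x_{n+1}}/H_0(x)$ because $H_\tau(x)=e^{\uC x}$ and $t_0=0$. Cancelling $H_0(x)$, multiplying by $e^{-s_0x}$, and using the Abel rearrangement $-\sum_{k=0}^n s_k(x_{k+1}-x_k)=s_0x_0+\sum_{k=1}^n(s_k-s_{k-1})x_k-s_nx_{n+1}$ (with $x_0=x$), I obtain
\begin{multline*}
e^{-s_0 x}H_0(x)\mathsf F(x)\\=\int_{\r^{n+1}} p_{t_1}(x_0,x_1)\Big(\prod_{k=1}^n e^{(s_k-s_{k-1})x_k}p_{t_{k+1}-t_k}(x_k,x_{k+1})\Big)e^{(\uC-s_n)x_{n+1}}\,\d x_1\cdots\d x_{n+1}.
\end{multline*}

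Next I would expand the right-hand side. Writing $q_{s,t}=h_t\,\widetilde q_{t-s}/h_s$ for the kernel \eqref{ICAK-q} and telescoping the $h$-factors gives, with $v_0=u$,
\begin{multline*}
h_{s_0}(u)\mathsf G(u)\\=\int_{(0,\infty)^n}e^{-\sum_{k=1}^n(t_{k+1}-t_k)v_k^2}\,h_{s_n}(v_n)\prod_{k=1}^n\widetilde q_{s_k-s_{k-1}}(v_{k-1},v_k)\,\d v_1\cdots\d v_n.
\end{multline*}
Now \eqref{K-Melin} and \eqref{mu} give $\widetilde q_t(u,v)\,\d v=\big(\int_\r e^{tx}K_{\i u}(e^x)K_{\i v}(e^x)\,\d x\big)\mu(\d v)$, while \eqref{K-Mellin2} and \eqref{h_s} give $h_{s_n}(v_n)=\int_\r e^{(\uC-s_n)x_{n+1}}K_{\i v_n}(e^{x_{n+1}})\,\d x_{n+1}$. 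Substituting these, multiplying by $e^{-t_1u^2}$, and applying $\kk^{-1}$ via \eqref{K-inv} (integration against $K_{\i u}(e^{x})\mu(\d u)$, with $x=x_0$, $u=v_0$), I would use Fubini to perform all the $v$-integrals first. Grouping the two $K$-factors carrying each $v_k$ with its Gaussian weight, each $v$-integral collapses by the very definition \eqref{p_t} of the Yakubovich kernel,
\[\int_0^\infty e^{-(t_{k+1}-t_k)v_k^2}K_{\i v_k}(e^{x_k})K_{\i v_k}(e^{x_{k+1}})\,\mu(\d v_k)=p_{t_{k+1}-t_k}(x_k,x_{k+1}),\]
and likewise $\int_0^\infty e^{-t_1v_0^2}K_{\i v_0}(e^{x_0})K_{\i v_0}(e^{x_1})\,\mu(\d v_0)=p_{t_1}(x_0,x_1)$. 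This reproduces exactly the integral displayed for the left-hand side, proving \eqref{Alexeys-F-G}.

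The two delicate points are the integrability claim and the Fubini step, which I expect to be the main obstacle. For the former, since the sum in \eqref{Gu} is nonnegative one has $0<\mathsf G(u)\le 1$, so it suffices that $\int_0^\infty e^{-t_1u^2}h_{s_0}(u)\,\mu(\d u)<\infty$; here the factor $e^{-t_1u^2}$ with $t_1>0$ (guaranteed by $t_0=0<t_1$) is indispensable, because by Stirling's asymptotics for $\vert\Gamma(\cdot)\vert^2$ on vertical lines the density $h_{s_0}(u)\,\mu(\d u)$ grows like $u^{\uC-s_0}e^{\pi u/2}\,\d u$ as $u\to\infty$ (and is harmless near $0$ since $\uC-s_0>0$), which the Gaussian dominates; this simultaneously yields $e^{-t_1u^2}h_{s_0}\mathsf G\in L_1(\d\mu)$. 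For the Fubini step I would bound $\vert K_{\i u}(e^x)\vert\le K_0(e^x)$ throughout and reduce to the joint integrability over $\r^{n+1}\times(0,\infty)^{n+1}$ of the product of $e^{-t_1v_0^2-\sum_k(t_{k+1}-t_k)v_k^2}$ (integrable against $\mu$ in $\vec v$, all time increments being strictly positive) with $e^{(\uC-s_n)x_{n+1}}K_0(e^{x_{n+1}})\prod_{k=1}^n e^{(s_k-s_{k-1})x_k}K_0^2(e^{x_k})$ (integrable in $\vec x$ by \eqref{K0-bd1}--\eqref{K0-bd2}, using $s_k-s_{k-1}>0$ and $\uC-s_n>0$). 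This is the identical mechanism used in the proof of Theorem~\ref{T1.2}, so the estimates there transfer.
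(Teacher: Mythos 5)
Your proof is correct, but it follows a genuinely different route from the paper's. The paper proves Theorem~\ref{T1} by backward induction on the number of time points: it defines $F_{n+1}(x)=e^{(\uC-s_n)x}$, $F_m=e^{(s_m-s_{m-1})x}\widetilde\pp_{t_{m+1}-t_m}[F_{m+1}]$ and the analogous $G_m$ on the spectral side, and propagates the single identity $\kk F_{m+1}=G_{m+1}\Rightarrow\kk F_m=G_m$ via the intertwining relation $\kk e^{sx}\widetilde\pp_t=\widetilde\qq_s e^{-tu^2}\kk$ (equation \eqref{e-interwening}), with integrability controlled one step at a time through Lemma~\ref{L0} ($F_m\in L_1(K_0)$ at every stage). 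You instead unravel both sides into a single $(2n+2)$-fold integral and verify the identity with one global application of Fubini, collapsing each $v_k$-integral back into the Yakubovich kernel via \eqref{p_t}; this is precisely the mechanism the paper uses in Section~\ref{Sect:SecondProof} to prove Theorem~\ref{T1.2} directly (and which the authors say they chose there \emph{in lieu of} deducing it from Theorems~\ref{T1}--\ref{T2}), transplanted to the conditional statement. Your telescoping of the $H$- and $h$-factors, the Abel rearrangement of the exponent, the identification $\widetilde q_t(u,v)\,\d v=\bigl(\int_\r e^{tx}K_{\i u}(e^x)K_{\i v}(e^x)\,\d x\bigr)\mu(\d v)$, the domination bound built from $\vert K_{\i u}(e^x)\vert\le K_0(e^x)$ together with \eqref{K0-bd1}--\eqref{K0-bd2}, and the $L_1(\d\mu)$ claim via $0<\mathsf G\le 1$ and the Stirling growth of $h_{s_0}(u)\mu(\d u)$ are all sound. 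What the paper's induction buys is that the analytic bookkeeping is reduced to a one-step lemma and the structural origin of the duality (each exchange of a multiplication operator past $\kk$) is laid bare; what your version buys is a shorter, self-contained computation relying only on the two classical integrals \eqref{K-Melin} and \eqref{K-Mellin2}, at the price of justifying joint integrability of the full multivariate integrand in one shot.
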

\begin{proof}
The proof is based on the identity
\begin{equation}\label{e-interwening}
\kk e^{sx}\widetilde\pp_t  =\widetilde\qq_s e^{-tu^2}\kk,
\end{equation}
where the operators act on $L_1(K_0)$.
This follows from \eqref{PP} and \eqref{tilde Q} by (\ref{K-associate}-\ref{K2-associate}).
\arxiv{Indeed,
\begin{multline*}
    \kk e^{s x}\widetilde{\pp}_t f \stackrel{\eqref{PP}}{=} \kk\left(e^{sx}\left(\kk^{-1}e^{-t u^2}\kk\right) f\right)
   \stackrel{\eqref{K-associate}}{=} \kk \left(e^{s x} \kk^{-1}\left(e^{-t u^2}\kk f\right)\right)
     \\ \stackrel{\eqref{K2-associate}}{=}  \left(\kk e^{s x}\kk^{-1}\right)e^{-t u^2}\kk f \stackrel{\eqref{tilde Q}}{=}
     \widetilde{\qq}_s e^{-t u^2}\kk f.
\end{multline*}
}

Define $$F_{n+1}(x)=e^{(\uC-s_n) x}=e^{-s_n x}H_\tau(x),$$
and for $1\leq m\leq n$  let %
\begin{align}\label{Fm}
  F_m(x)& =e^{(s_m-s_{m-1}) x}H_{t_m}(x) \e \Big[e^{\sum\limits_{k=m+1}^n (s_{k}-s_{k-1})X_{t_k}}  e^{-s_n X_\tau}\Big\vert  X_{t_m}=x \Big]  \\
  &= e^{(s_m-s_{m-1}) x} \widetilde{\pp}_{t_{m+1}-t_m} [F_{m+1}].\nonumber
\end{align}
Since $F_{n+1}\in L_1(K_0)$,  by Lemma \ref{L0}  all $F_m\in L_1(K_0)$.

Next, we define
\begin{equation}
    \label{ind0} G_{n+1}(u):=h_{s_n}(u)=\kk [e^{(\uC-s_n) x}](u)=\kk [F_{n+1}](u),
\end{equation}
see \eqref{h_s}, and for
$1\leq m\leq n$  %
let
\begin{multline}\label{Gm}
G_m(u)= h_{s_{m-1}}(u) \e\left[  e^{-\sum_{k=m}^n (t_{k+1}-t_k)Z_{s_k}^2} \middle \vert Z_{s_{m-1}}=u\right] \\  = \widetilde{\qq}_{s_{m}-s_{m-1}}  \left[e^{-(t_{m+1}-t_{m})u^2}G_{m+1}(u)\right].
\end{multline}

Suppose that for some $1\leq m\leq n$ we have \begin{equation}
    \label{ind.} \kk F_{m+1}=G_{m+1}.
\end{equation}
(Note that \eqref{ind.} holds for $m=n$ by \eqref{ind0}.) Then
\begin{align*} %
\kk F_m & \stackrel{\eqref{Fm}}{=} \kk  e^{(s_m-s_{m-1}) x} \widetilde{\pp}_{t_{m+1}-t_m} [F_{m+1}]
\\&
\stackrel{\eqref{e-interwening}}{=} \widetilde\qq_{s_m-s_{m-1}}\, e^{-u^2(t_{m+1}-t_m)}\kk[F_{m+1} ]  \\ & \stackrel{\eqref{ind.}}{=}    \widetilde\qq_{s_m-s_{m-1}}\, e^{-u^2(t_{m+1}-t_m)} [G_{m+1}]
\stackrel{\eqref{Gm}}{=} G_m.
\end{align*}
This shows that $\kk F_1=G_1$.
 We now note that
     \begin{align*}
  \mathsf  F(\tilde x)&=e^{s_0\tilde x}   \e \Big[e^{\sum\limits_{k=1}^n (s_{k}-s_{k-1})X_{t_k}}  e^{-s_n X_\tau}\Big\vert X_0=\tilde x\Big]
   \\ &=e^{s_0\tilde x}   \pp_{t_0,t_1}  \left[ e^{(s_1-s_0)x }\e \Big[e^{\sum\limits_{k=2}^n (s_{k}-s_{k-1})X_{t_k}}  e^{-s_n X_\tau}\Big\vert  X_{t_{1}}=x \Big]\right](\tilde x)
   \\=&e^{s_0\tilde x} \frac{1}{H_0(\tilde x)}\widetilde \pp_{t_1}\left[H_{t_1}(x)  e^{(s_1-s_0)x}\e \Big[e^{\sum\limits_{k=2}^n (s_{k}-s_{k-1})X_{t_k}}  e^{-s_n X_\tau}\Big\vert  X_{t_{1}}=x \Big]\right](\tilde x)
\\
&=e^{s_0\tilde x} \frac{1}{H_0(\tilde x)}\widetilde \pp_{t_1}[F_1](\tilde x).
   \end{align*}
   So using   \eqref{PP} and (\ref{K-associate}-\ref{K2-associate}) again,
 we get
\begin{multline*}
  \label{F1F2}  e^{-s_0x}H_0(x)\mathsf F(x)=\kk^{-1} e^{-t_1u^2}\kk F_1(x)\stackrel{\eqref{ind.}}{=}\kk^{-1} [e^{-t_1u^2} G_1(u)](x) \\ =
 \kk^{-1} [e^{-t_1u^2} h_{s_0}(u) \mathsf G(u)](x).
\end{multline*}
Thus \eqref{Alexeys-F-G} holds.%
\end{proof}

Recall that  semigroup $\qq_{s,t}$  is well defined for all $-\vA<s<t<\uC$. The interval is non-empty when $\vA+\uC>0$, but it may contain negative numbers.

\begin{theorem}\label{T2}   Fix $\vA+\uC>0$.  Let $-\vA<s_0<s_1<\dots<s_{n}<\uC$   and $0=t_0<t_1<t_2<\dots<t_{n}<t_{n+1}=\tau$. Let $(X_t)_{t\in[0,\tau]}$ be the Markov process with finite dimensional distributions \eqref{joint-law0}
 and let  $(Z_s)_{s\in(-\vA, \uC)}$ be the Markov  %
 process
 described %
 at the end of Section   \ref{Sec:Z}.
 Then
\begin{equation}\label{For-proof}
\e\Big[ e^{\sum\limits_{k=0}^n - s_{k} (X_{t_{k+1}}-X_{t_k}) } \Big]
=\int_0^\infty\e\left[e^{-\sum\limits_{k=0}^{n} (t_{k+1}-t_k) Z_{s_k}^2}\middle\vert  Z_{s_0}=u\right] \widetilde \nu_{s_0}(\d u),
\end{equation}
where the $\sigma$-finite measure
$\widetilde\nu_{s_0}(\d u)$ %
is given by \eqref{Z-ini}.
\end{theorem}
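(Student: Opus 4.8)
The plan is to obtain the unconditional identity \eqref{For-proof} by integrating the conditional identity \eqref{Alexeys-F-G} of Theorem \ref{T1} against the initial law of $X_0$. First I would write the left-hand side of \eqref{For-proof} as $\e[\mathsf F(X_0)]$, with $\mathsf F$ from \eqref{F-W}, and insert the initial distribution \eqref{X-ini}, obtaining
\begin{equation*}
\e\Big[e^{\sum_{k=0}^n -s_k(X_{t_{k+1}}-X_{t_k})}\Big]=\frac{1}{C\topp\tau_{\vA,\uC}}\int_\r \mathsf F(x)H_0(x)e^{\vA x}\d x.
\end{equation*}
Substituting $H_0(x)\mathsf F(x)=e^{s_0 x}\kk^{-1}[e^{-t_1 u^2}h_{s_0}(u)\mathsf G(u)](x)$ from \eqref{Alexeys-F-G} and expanding $\kk^{-1}$ through its integral representation \eqref{K-inv} turns the right-hand side into the double integral
\begin{equation*}
\frac{1}{C\topp\tau_{\vA,\uC}}\int_\r e^{(\vA+s_0)x}\int_0^\infty K_{\i u}(e^x)e^{-t_1 u^2}h_{s_0}(u)\mathsf G(u)\,\mu(\d u)\,\d x.
\end{equation*}

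Next I would interchange the two integrations by Fubini's theorem and evaluate the inner integral in $x$ using the Mellin-type identity \eqref{K-Mellin2}, which gives $\int_\r e^{(\vA+s_0)x}K_{\i u}(e^x)\d x=g_{s_0}(u)$ in the notation of \eqref{gsu}. The remaining $u$-integral then reads
\begin{equation*}
\int_0^\infty e^{-t_1 u^2}\mathsf G(u)\,\frac{1}{C\topp\tau_{\vA,\uC}}g_{s_0}(u)h_{s_0}(u)\,\mu(\d u)=\int_0^\infty e^{-t_1 u^2}\mathsf G(u)\,\widetilde\nu_{s_0}(\d u),
\end{equation*}
where the last step recognizes the measure $\tfrac{1}{C\topp\tau_{\vA,\uC}}g_{s_0}h_{s_0}\mu$ as $\widetilde\nu_{s_0}$ from \eqref{Z-ini}. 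Finally, since $t_0=0$, the factor $e^{-t_1 u^2}$ is exactly the $k=0$ term $e^{-(t_1-t_0)Z_{s_0}^2}$ evaluated on $\{Z_{s_0}=u\}$, so $e^{-t_1 u^2}\mathsf G(u)=\e[e^{-\sum_{k=0}^n(t_{k+1}-t_k)Z_{s_k}^2}\mid Z_{s_0}=u]$ by \eqref{Gu}, which is precisely the right-hand side of \eqref{For-proof}.

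The main obstacle is justifying the use of Fubini's theorem, i.e.\ the absolute integrability of the joint integrand over $\r\times(0,\infty)$ with respect to $\d x\,\mu(\d u)$. Here I would use $\vert K_{\i u}(e^x)\vert\le K_0(e^x)$ together with $0\le\mathsf G(u)\le 1$ to dominate the integrand by the product $\big(e^{(\vA+s_0)x}K_0(e^x)\big)\big(e^{-t_1 u^2}h_{s_0}(u)\mathsf G(u)\big)$, whose double integral factors. The $x$-factor is finite because the hypothesis $s_0>-\vA$ gives $\vA+s_0>0$, which controls the logarithmic growth of $K_0(e^x)$ as $x\to-\infty$ via \eqref{K0-bd2}, while \eqref{K0-bd1} supplies super-exponential decay as $x\to+\infty$; the $u$-factor is finite by the $L_1(\d\mu)$ statement already established in Theorem \ref{T1}, the Gaussian $e^{-t_1 u^2}$ with $t_1>0$ dominating the at-most-exponential growth of $h_{s_0}\mu$. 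This is the only delicate point; the remaining steps are bookkeeping with the defining formulas \eqref{X-ini}, \eqref{gsu} and \eqref{Z-ini}.
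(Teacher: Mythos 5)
Your proposal is correct and follows essentially the same route as the paper: both write the left-hand side as $\e[\mathsf F(X_0)]$ via \eqref{X-ini}, substitute the conditional identity \eqref{Alexeys-F-G}, and move the factor $e^{(\vA+s_0)x}$ through the Kontorovich--Lebedev transform using \eqref{K-Mellin2} to produce $g_{s_0}(u)$ and hence the measure $\widetilde\nu_{s_0}$. The only cosmetic difference is that you carry out the Fubini interchange by hand, whereas the paper invokes the prepackaged Parseval identity \eqref{Prsvl+} of Lemma \ref{L0}, which is itself justified by exactly the domination argument ($\vert K_{\i u}(e^x)\vert\le K_0(e^x)$, $e^{(\vA+s_0)x}K_0(e^x)\in L_1$, and the $L_1(\d\mu)$ membership from Theorem \ref{T1}) that you give.
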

\begin{proof}
We  apply %
\eqref{Alexeys-F-G} and then Parseval identity \eqref{Prsvl+} to %
the left hand side of \eqref{For-proof}. Recall notation \eqref{F-W} and \eqref{X-ini}.
 We have
\begin{multline*}
\e\Big[ e^{\sum\limits_{k=0}^n - s_{k} (X_{t_{k+1}}-X_{t_k}) } \Big]
  \stackrel{\eqref{F-W}}{=}\e \left[\mathsf F(X_0)\right]
  =\frac{1}{ C_{\vA,\uC}\topp\tau}\int_\r H_0(x)e^{\vA x} \mathsf F(x)\d x
  \\=\frac{1}{C_{\vA,\uC}\topp\tau}\int_\r e^{(\vA+s_0)x}  e^{-s_0x}H_0(x)\mathsf F(x)\d x
 \\ \stackrel{\eqref{Alexeys-F-G}}{=} \frac{1}{C_{\vA,\uC}\topp\tau}\int_\r e^{(\vA+s_0)x} \mathcal{K}^{-1}\left[ e^{-t_1u^2}h_{s_0}(u)\e\big[
    e^{-\sum_{k=1}^n (t_{k+1}-t_k)Z_{s_k}^2} \big \vert {Z_{s_0}=u}  \big]\right] \d x
 \\
  \stackrel{\eqref{Prsvl+}}{=}  \tfrac{1}{C_{\vA,\uC}\topp\tau}\int_0^\infty \mathcal{K}\left[e^{(\vA+s_0)x}\right] e^{-(t_1-t_0)u^2}h_{s_0}(u)\e\big[
    e^{-\sum_{k=1}^n (t_{k+1}-t_k)Z_{s_k}^2} \big \vert {Z_{s_0}=u}  \big] \mu(\d u)
   \\    \stackrel{\eqref{gsu}}{=}   \frac{1}{C_{\vA,\uC}\topp\tau}\int_0^\infty   h_{s_0}(u)g_{s_0}(u)  \e\left[
    e^{-\sum_{k=0}^n (t_{k+1}-t_k)Z_{s_k}^2} \middle \vert {Z_{s_0}=u}  \right] \mu(\d u)\nonumber
    \\  \stackrel{\eqref{Z-ini}}{=}  \int_0^\infty    \e\left[
    e^{-\sum_{k=0}^n (t_{k+1}-t_k)Z_{s_k}^2} \middle \vert {Z_{s_0}=u}  \right] \widetilde \nu_{s_0}(\d u).
\end{multline*}
\end{proof}

 We remark that by  Lemma \ref{Rem:Yizao}   we can replace the right hand side of \eqref{For-proof} by
 $$
 \int_0^\infty\e\left[e^{-\sum\limits_{k=0}^{n} (t_{k+1}-t_k) Z_{s_k}^2}\middle\vert  Z_{s_*}=u\right] \widetilde \nu_{s_*}(\d u)
 $$
for an arbitrary $s_*\in(-\vA,s_0)$.

\section{Relation to  Hartman-Watson density }\label{Sect: Rel-HW}
The (unnormalized) Hartman-Watson density function $t\mapsto \theta(r,t)$ is defined as
\begin{equation*}\label{formula_theta}
\theta(r,t)=\frac{re^{\pi^2/(2t)}}{\sqrt{2\pi^3 t}} \int_0^{\infty} e^{-y^2/(2t)-r \cosh(y)}
\sinh(y) \sin(\pi y/t) \d y, \;\;\; t>0, \; r>0.
\end{equation*}
In some papers this function is denoted by $\theta_r(t)$, but here we will follow the notation from \cite{MatsumotoYor2005I,MatsumotoYor2005II}.
An alternative integral representation was obtained by Yakubovich in \cite[formula (3.2),  slightly corrected]{Yakubovich2013}
\begin{equation}\label{Yak-theta}
\theta(r,t)=\frac{1}{\pi} \int_0^{\infty} e^{- t u^2/2} K_{\i u}(r)  \frac{\d u}{\vert \Gamma(\i u)\vert ^2}.
\end{equation}

The scaled version of this density function was introduced by Hartman and Watson in \cite{hartman1974normal} and the function
$\theta(r,t)$ was studied extensively by Marc Yor and many other researchers (see \cite{MatsumotoYor2005I,MatsumotoYor2005II} and the references therein). The interest in the function $\theta(r,t)$ can be explained by it close relation to the exponential functionals of Brownian motion and to pricing of Asian options in the Black--Scholes model. Let $B=\{B_t\}_{t\ge 0}$ be a one-dimensional Brownian motion starting from zero and define $B^{(\mu)}=\{B^{(\mu)}_t=B_t+\mu t\}_{t\ge 0}$ to be the Brownian motion with drift $\mu \in \r$. Define
$$
A^{(\mu)}_t=\int_0^t e^{2 B_s^{(\mu)}} \d s, \; \; \; t \ge 0.
$$
Then for $t>0$, $y>0$ and $x\in \r$ it holds that
\begin{equation*}
\p\left(A^{(\mu)}_t \in \d y, B^{(\mu)}_t \in \d x\right) = e^{\mu x - \mu^2 t/2}
\exp\big( -(1+e^{2x})/(2y)\big) \theta(e^x/y,t) \frac{\d y \d x}{y},
\end{equation*}
see \cite[Theorem 4.1]{MatsumotoYor2005I}.
The function $\theta(r,t)$ has the following Laplace transforms (see Proposition 4.2 and Theorem A.1 in \cite{MatsumotoYor2005I})
\begin{align}
&\int_0^{\infty} e^{-x r} \theta(r,t) \frac{\d r}{r}=\frac{1}{\sqrt{2\pi t}} \label{MY2}
\exp\Big(-{\textrm{Argcosh}}(x)^2/(2t) \Big), \;\;\; x\ge 1, \; t>0,\\
&\int_0^{\infty} e^{-\la^2 t/2}  \theta(r,t) \d t=I_{\la}(r), \;\;\; \la>0, \; r>0,  \label{MY1}
\end{align}
where $I_{\la}$ is the modified Bessel function of the first kind.
The Yakubovich heat kernel $p_t(x,y)$ can be expressed in terms of $\theta(r,t)$ as follows
\begin{equation}\label{Yak-Hart-Wat}
p_t(x,y)= \int_{\r}\, \exp\left(-\tfrac12(e^{r+x-y}+e^{r+y-x}+e^{x+y-r})\right)\theta(e^r,2t)\,\d r.
\end{equation} %
The above identity is a simple re-write of
\cite[formula (50)]{SousaYakubovich2018}, and can also be deduced  from a more general formula in \cite[Remark 4.1]{MatsumotoYor2005I} which relates the kernel to the so-called Liouville potential. It makes positivity of $p_t(x,y)$ obvious.

Yakubovich's formula  \eqref{Yak-Hart-Wat} gives   an unexpected   closed-form formula for the Laplace transform %

$$L_\la(\vA,\uC)= \int_0^\infty e^{-\la^2 \tau} C_{\vA,\uC}\topp \tau \d \tau$$
of
the normalizing constant \eqref{C-formula}.
\begin{theorem}\label{Prop-LapC}
  If $0<\vA+\uC<2$ %
  then for $\la>\max\{-\uC,-\vA\}$ we have %

 \begin{equation*}
  \label{LapTC}
L_\la(\vA,\uC)=   \frac{2^{\vA+\uC}\pi }{8\sin (\pi\frac{\vA+\uC}{2})} \frac{\Gamma(\frac{\vA+\la}{2}, \frac{\uC+\la}{2})}{\Gamma(\frac{\la+2-\vA}{2},\frac{\la+2-\uC}{2})}.
\end{equation*}
\end{theorem}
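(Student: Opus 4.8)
The plan is to start from the definition $L_\la(\vA,\uC)=\int_0^\infty e^{-\la^2\tau}C_{\vA,\uC}\topp\tau\,\d\tau$, insert \eqref{C-formula} together with the Yakubovich--Hartman--Watson representation \eqref{Yak-Hart-Wat} of the kernel, and integrate out the time variable $\tau$ first. Every factor in the resulting fourfold integral over $(\tau,x,y,r)$ is nonnegative -- namely $e^{-\la^2\tau}$, $e^{\vA x+\uC y}$, the triple exponential, and the Hartman--Watson density $\theta(e^r,2\tau)>0$ -- so Tonelli's theorem permits any reordering, and finiteness of the final answer will retroactively justify all the interchanges. Performing the $\tau$-integral with the substitution $t=2\tau$ and the Laplace transform \eqref{MY1} gives
\[
\int_0^\infty e^{-\la^2\tau}\theta(e^r,2\tau)\,\d\tau=\tfrac12 I_\la(e^r),
\]
whence
\[
L_\la(\vA,\uC)=\tfrac12\int_\r I_\la(e^r)\left(\int_{\r^2}e^{\vA x+\uC y}\exp\!\left(-\tfrac12\bigl(e^{r+x-y}+e^{r+y-x}+e^{x+y-r}\bigr)\right)\d x\,\d y\right)\d r.
\]

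Next I would evaluate the inner double integral by the linear change of variables $u=x-y$, $v=x+y$ (Jacobian $\tfrac12$), which decouples it into a product. Writing $\beta=\tfrac{\vA+\uC}{2}$, the $v$-integral is $\int_\r e^{\beta v}e^{-\frac12 e^{v-r}}\,\d v=2^{\beta}\Gamma(\beta)\,e^{\beta r}$ (substitute $w=e^{v-r}$; this needs $\beta>0$, i.e.\ $\vA+\uC>0$), while the $u$-integral, using $e^{r+u}+e^{r-u}=2e^r\cosh u$ together with the classical representation $K_\nu(z)=\int_0^\infty e^{-z\cosh w}\cosh(\nu w)\,\d w$, equals $\int_\r e^{\frac{\vA-\uC}{2}u}e^{-e^r\cosh u}\,\d u=2K_{(\vA-\uC)/2}(e^r)$. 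Thus the bracket equals $2^{\beta}\Gamma(\beta)\,e^{\beta r}K_{(\vA-\uC)/2}(e^r)$, and after the substitution $z=e^r$ the whole expression collapses to a single Bessel integral:
\[
L_\la(\vA,\uC)=2^{\beta-1}\Gamma(\beta)\int_0^\infty z^{\beta-1}K_{(\vA-\uC)/2}(z)\,I_\la(z)\,\d z.
\]

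The final, and most delicate, step is to evaluate this Weber--Schafheitlin-type integral $\int_0^\infty z^{s-1}K_\mu(z)I_\nu(z)\,\d z$ with $s=\beta$, $\mu=\tfrac{\vA-\uC}{2}$, $\nu=\la$. I would quote its standard closed form (e.g.\ Gradshteyn--Ryzhik 6.576.4) in the diagonal case $a=b=1$: up to an explicit prefactor it equals a Gauss hypergeometric function with numerator parameters $\tfrac{s+\nu+\mu}{2}=\tfrac{\vA+\la}{2}$ and $\tfrac{s+\nu-\mu}{2}=\tfrac{\uC+\la}{2}$ and lower parameter $\nu+1=\la+1$, evaluated at argument $1$. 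Its convergence at the origin requires $\re(s+\nu\pm\mu)>0$, i.e.\ $\la>-\vA$ and $\la>-\uC$, which is exactly the hypothesis $\la>\max\{-\vA,-\uC\}$; and summing the series at $1$ by Gauss's theorem requires $\re(C-A-B)=1-\beta>0$, i.e.\ $\vA+\uC<2$, which is simultaneously the condition for integrability at $z=\infty$. Reading off $C-A=\tfrac{\la+2-\vA}{2}$ and $C-B=\tfrac{\la+2-\uC}{2}$, combining the prefactor with Gauss's value, cancelling the common $\Gamma(1+\la)$, and finally using Euler's reflection $\Gamma(\beta)\Gamma(1-\beta)=\pi/\sin(\pi\beta)$ together with $2^{\beta-1}/2^{2-\beta}=2^{\vA+\uC-3}=2^{\vA+\uC}/8$ produces the claimed formula $L_\la(\vA,\uC)=\tfrac{2^{\vA+\uC}\pi}{8\sin(\pi\frac{\vA+\uC}{2})}\,\Gamma(\tfrac{\vA+\la}{2},\tfrac{\uC+\la}{2})/\Gamma(\tfrac{\la+2-\vA}{2},\tfrac{\la+2-\uC}{2})$.

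I expect the main obstacle to be precisely the evaluation of this last Bessel integral, since one must invoke the Weber--Schafheitlin formula at its boundary case $a=b$, where the hypergeometric factor is summed at $1$, and then verify that the two constraints it imposes -- integrability at the origin and the applicability of Gauss's theorem -- reproduce \emph{exactly} the standing hypotheses $\la>\max\{-\vA,-\uC\}$ and $0<\vA+\uC<2$. By contrast, the interchange of integrals (via positivity) and the factorization of the $(x,y)$-integral are routine.
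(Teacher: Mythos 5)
Your proposal is correct and follows essentially the same route as the paper: both insert the Hartman--Watson representation \eqref{Yak-Hart-Wat} into \eqref{C-formula}, use positivity to reorder the integrals, apply \eqref{MY1} to produce $\tfrac12 I_\la(e^r)$, reduce everything to the single Bessel moment $\int_0^\infty z^{\frac{\vA+\uC}{2}-1}I_\la(z)K_{(\uC-\vA)/2}(z)\,\d z$, and finish with a table evaluation plus Euler reflection, with the convergence constraints reproducing exactly the stated hypotheses. The only (immaterial) differences are that you factorize the inner double integral via the rotation $u=x-y$, $v=x+y$ into a Gamma integral times the $\cosh$-representation of $K_{(\vA-\uC)/2}$, where the paper instead applies the two table identities \eqref{id1} and \eqref{id2} sequentially, and that you quote the Weber--Schafheitlin formula summed at $1$ by Gauss rather than the already-closed form \cite[6.8(43)]{erdelyi1953higher}.
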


  \begin{proof}
Putting  \eqref{Yak-Hart-Wat} into  \eqref{C-formula}, and noting that the integrand is positive, we write
    \begin{equation*}
     C_{\vA,\uC}\topp \tau =  \int_\r\, {\mathcal I}(r) \, \theta(e^r,2\tau)
\,\d r,
  \end{equation*}
  where  %
  \begin{align*}\label{Ir-def}
{\mathcal I}(r)&:=\int_{\r} \int_{\r} e^{\vA x+\uC y} \exp\left(-\tfrac12(e^{r+x-y}+e^{r+y-x}+e^{x+y-r})\right)   \d x \d y.
 \end{align*}
 We will need the following two identities
\cite[3.471.9 and 6.596.3]{gradshteyn2007table} or \cite[6.3(17) and  6.8(32)]{erdelyi1954fg} :
\begin{equation}\label{id1}
\int_0^{\infty} x^{\nu-1} e^{-\beta/x-\gamma x} \d x=2 (\beta/\gamma)^{\nu/2} K_{\nu}(2\sqrt{\beta \gamma}), \;\;\; \re(\beta)>0, \; \re(\gamma)>0,
\end{equation}
and
\begin{equation}\label{id2}
\int_{0}^{\infty} K_{\nu}(\sqrt{x^2+z^2}) \frac{x^{2\mu+1}}{(x^2+z^2)^{\nu/2}} \d x
=\frac{2^{\mu} \Gamma(\mu+1)}{z^{\nu-\mu-1}} K_{\nu-\mu-1}(z), \;\;\; \re(\mu)>-1.
\end{equation}
We do a change of variables $e^x=w$, $e^y=z$, $e^r=R$ and obtain
\begin{align*}
{\mathcal I}(r)&=\int_0^{\infty} \int_0^{\infty} w^{\vA-1} z^{\uC-1}
 \exp\left(-\tfrac12(Rw/z+Rz/w+zw/R)\right) \d z \d w.
 \end{align*}
 The inner integral is computed via \eqref{id1} as follows
 \begin{align*}
  \int_0^{\infty} z^{\uC-1}
 \exp\left(-\tfrac12(w/R+R/w)z-\tfrac12 Rw/z\right) \d z=
 2 R^\uC w^\uC \frac{K_{\uC}(\sqrt{R^2+w^2})}{(R^2+w^2)^{\uC/2}}.
 \end{align*}
 The above formula holds for all real $\uC$.
 Next we compute the outer integral using \eqref{id2}
 \begin{align}\label{id3}
 {\mathcal I}(r)&=2 R^\uC \int_0^{\infty} w^{\vA+\uC-1}
   \frac{K_{\uC}(\sqrt{R^2+w^2})}{(R^2+w^2)^{\uC/2}} \d w
  =2 R^\uC \frac{2^{\frac{\vA+\uC}{2}-1} \Gamma\Big(\frac{\vA+\uC}{2}\Big)}{R^{(\uC-\vA)/2}} K_{(\uC-\vA)/2}(R)
  \\&=2^{\frac{\vA+\uC}{2}} \Gamma\Big(\frac{\vA+\uC}{2}\Big) e^{r(\vA+\uC)/2} K_{(\uC-\vA)/2}(e^r).\nonumber
\end{align}

 From \eqref{id3} and \eqref{MY1} we get
  \begin{multline*}
     \int_0^\infty e^{-\la^2 \tau} C_{\vA,\uC}\topp\tau \d \tau
      \\ =2^{\frac{\vA+\uC}{2}} \Gamma\Big(\frac{\vA+\uC}{2}\Big)\,\int_\r\,e^{r(\vA+\uC)/2}\,\left(\int_0^\infty e^{-\la^2 \tau }  \theta(e^r,2\tau) \d \tau\right) K_{(\uC-\vA)/2}(e^r) \d r
     \\ =2^{\frac{\vA+\uC}{2}-1} \Gamma\Big(\frac{\vA+\uC}{2}\Big)  \int_\r  e^{r(\vA+\uC)/2} I_\la(e^r) K_{(\uC-\vA)/2}(e^r) \d r \\
      =2^{\frac{\vA+\uC}{2}-1} \Gamma\Big(\frac{\vA+\uC}{2}\Big)  \int_0^\infty  x^{\frac{\vA+\uC}{2}-1} I_\la(x)  K_{(\uC-\vA)/2}(x) \d x,
  \end{multline*}
  with substitution $x=e^r$.
  Formula \cite[6.8(43)]{erdelyi1953higher}
  $$
  \int_0^\infty x^{s-1} I_\nu(x)K_\mu(x)\d x= \frac{\Gamma(\frac{s+\mu+\nu}{2})B(1-s,\frac{s+\nu-\mu}{2})}{2^{2-s} \Gamma(\frac{\mu+\nu-s}{2}+1)},
  $$ which holds if $\re(-\nu\pm\mu)<\re (s) <1$, used with $s=(\vA+\uC)/2$, $\mu=(\uC-\vA)/2$, $\nu=\la$
  gives
  $$
     \int_0^\infty e^{-\la^2 \tau} C_{\vA,\uC}\topp\tau \d \tau = \frac{2^{\vA+\uC}}{8} \frac{\Gamma(\frac{\vA+\la}{2},\frac{2-\vA-\uC}{2},\frac{\uC+\la}{2})}{\Gamma(\frac{\la+2-\vA}{2},\frac{\la+2-\uC}{2})}\Gamma(\frac{\vA+\uC}{2}).$$
   This simplifies by Euler's reflection formula,
   $$
   \int_0^\infty e^{-\la^2 \tau} C_{\vA,\uC}\topp\tau \d \tau =  \frac{\pi 2^{\vA+\uC}}{8\sin (\pi\frac{\vA+\uC}{2})} \frac{\Gamma(\frac{\vA+\la}{2}, \frac{\uC+\la}{2})}{\Gamma(\frac{\la+2-\vA}{2},\frac{\la+2-\uC}{2})}.$$
  \end{proof}

 \begin{remark}
 \cite[Theorem 3.1]{craddock2014integral} gives a formula for Yakubovich heat kernel that in our notation simplifies to:
$$
p_t(x,y)=\frac{1}{4\sqrt{\pi}t^{3/2}} \int_{\vert x-y\vert }^\infty u e^{-\frac{u^2}{4t}} J_0(e^{x+y+u}+e^{x+y-u}- e^{2x}-e^{2y})\d u,
$$
where $J_0$ is the Bessel function of the first kind.
For another formula of similar type, see \cite[Proposition 5.4]{MatsumotoYor2005I}.
 \end{remark}

\arxiv{
\subsection{Using GIG and gamma laws} Some integral  identities can be replaced by more probabilistic arguments that rely on special properties of Generalized Inverse Distributions (GIG).
Here we show how to re-derive \eqref{id3} by this technique.
\newcommand{\R}{\r}
Let $\mathrm{GIG}(p,\alpha,\beta)$, $p\in\R$, $\alpha,\beta>0$, be the generalized inverse Gaussian distribution %
defined by the density %
\begin{equation*}\label{GIG}
f(y)=\left(\tfrac{\alpha}{\beta}\right)^p\tfrac{y^{p-1}}{2K_p(\sqrt{\alpha\beta})}\,e^{-\tfrac{\alpha y}{2}-\tfrac{\beta }{2y}}\,I_{(0,\infty)}(y).
\end{equation*}
Let $\mathrm G(q,\gamma)$, $q,\gamma>0$, be the gamma distribution defined by the density
\begin{equation*}\label{gamm}
g(x)=\tfrac{\gamma^q}{\Gamma(p)}\,x^{q-1}\,e^{-\gamma x}\,I_{(0,\infty)}(x).
\end{equation*}

Let $X\sim \mathrm G(q,\tfrac{1}{2c})$ and $Y\sim \mathrm{GIG}(p,c,c)$ be independent.

Consider a function $\psi$ defined by $$\psi(x,y)=\left(\sqrt{xy},\,\sqrt{\tfrac{y}{y}}\right),\quad x,y>0.$$ Note that this is a diffeomorphism from $(0,\infty)^2$ onto itself with the Jacobian of $\psi^{-1}(z,w)=(zw,\tfrac{z}{w})$ of the form
$$
J_{\psi^{-1}}(z,w)=\tfrac{2z}{w}.
$$
Therefore the density of the random vector $$(Z,W)=\left(\sqrt{XY},\,\sqrt{\tfrac{X}{Y}}\right)$$ is
\begin{align*}
f_{Z,W}(z,w)&=J_{\psi^{-1}}(z,w)f_X(zw)f_Y(z/w)\\
&=
\tfrac{2z}{w}\,\tfrac{1}{(2c)^q\Gamma(q)}\,(zw)^{q-1}e^{-\tfrac{zw}{2c}}\,\tfrac{1}{2K_p(c)}\,(z/w)^{p-1}\,e^{-\tfrac{c(z/w)}{2}-\tfrac{c}{2(z/w)}}\\
&=\tfrac{z^{q+p-1}w^{q-p-1}\,\exp\left(-\tfrac{zw}{2c}-\tfrac{cz}{2w}-\tfrac{cw}{2z}\right)}{C(p,q,c)},
\end{align*}
where $C(p,q,c)=(2c)^q\Gamma(q)K_{p}(c)$ is the normalizing constant.

Thus the density of  $(U,V)=(\log\,Z,\,\log W)$ is
$$
f_{U,V}(u,v)=\tfrac{e^{(p+q)v+(q-p)u}\,\exp\left(-\tfrac{1}{2}\left(c^{-1}e^{u+v}+c e^{v-u}+ce^{u-v}\right)\right)}{C(p,q,c)},\quad u,v\in\R.
$$

Summing up, $\mathcal I(r)$, 
is the normalizing constant of the joint density of $$(U,V)=\tfrac{1}{2}(\log X +\log Y,\,\log X-\log Y)$$
where $X\sim \mathrm G(\tfrac{\vA+\uC}{2},\,\tfrac{1}{2e^r})$ and $Y\sim\mathrm{GIG}(\tfrac{\vA-\uC}{2},e^r,e^r)$ are independent random variables.
Thus
$$
\mathcal I(r)=C\left(\tfrac{\vA-\uC}{2},\tfrac{\vA+\uC}{2},e^r\right)=(2e^r)^{\tfrac{\vA+\uC}{2}}\,\Gamma(\tfrac{\vA+\uC}{2})\,K_{\tfrac{\vA-\uC}{2}}(e^r)
$$
as given in \eqref{id3}.

Properties of GIG can also replace  Macdonald's formula in the proof of \eqref{Yak-Hart-Wat}.

 }

\appendix

\section{Additional properties of Kontorovich--Lebedev transform}\label{Sec: Additional Props}
We  collect here facts about   $\kk$ and $\kk^{-1}$ that we need in the paper.
\begin{lemma}\label{L0}
\begin{enumerate}[(i)]
\item  \label{L0i}
If $F$ is in $L_1(K_0)$   then $\kk F$ is bounded.
\item \label{L0-ii} If $G$ is in $L_1(\d \mu)$  and $\eps>0$, then  function
$
e^{\eps x} \kk^{-1} G
$ is in $L_1(K_0)$.
\item \label{L0iii}
If $F$ is in $L_1(K_0)$  and $G$ is in $L_1(\d \mu)$ then we have Parseval's identity
  \begin{equation}\label{Prsvl+}
    \int_\r F(x)\mathcal{K}^{-1}[G](x) \d x =  \int_0^\infty G(u) \mathcal{K}[F](u) \mu(\d u).
  \end{equation}
\item \label{L0-iv}
If $F$ is in $L_1(K_0)$ and $\delta >0$, then we have associativity
\begin{equation}
  \label{K-associate}
  \left(\kk^{-1}e^{-\delta  u^2} \kk\right)F=\kk^{-1}\left(e^{-\delta  u^2}\kk F\right).
\end{equation}
\item \label{L0-v}
If $G$ is in $L_1(\d \mu)$ and $\delta >0$, then we have associativity
\begin{equation}
  \label{K2-associate}
  \left(\kk e^{\delta x} \kk^{-1}\right)G=\kk \left(e^{\delta x}\kk^{-1} G\right).
\end{equation}
\end{enumerate}

\end{lemma}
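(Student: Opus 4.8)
The plan is to derive all five statements from a single elementary ingredient: the uniform domination $|K_{\i u}(x)|\le K_0(x)$, valid for every $u\in\r$ and $x>0$, which is immediate from \eqref{K-def} since $|\cos(uw)|\le 1$. Combined with the integrability bounds \eqref{K0-bd1}--\eqref{K0-bd2} and the obvious estimate $\int_0^\infty e^{-\delta u^2}\mu(\d u)<\infty$ (the Gaussian factor dominates the growth $u\sinh(\pi u)$ of $\mu$ in \eqref{mu}), every integral below is shown to converge absolutely, after which Fubini--Tonelli does the rest.

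First I would prove (i) and (ii) directly. For (i), $|\kk F(u)|\le\int_\r|F(x)|\,K_0(e^x)\,\d x=\|F\|_{L_1(K_0)}$ uniformly in $u$. For (ii), the same domination gives $|\kk^{-1}G(x)|\le K_0(e^x)\,\|G\|_{L_1(\d\mu)}$, so
\[
\int_\r e^{\eps x}\,|\kk^{-1}G(x)|\,K_0(e^x)\,\d x\le\|G\|_{L_1(\d\mu)}\int_\r e^{\eps x}K_0(e^x)^2\,\d x<\infty
\]
by \eqref{K0-bd2}. Statement (iii) is then a routine application of Fubini's theorem to $\int_\r\int_0^\infty F(x)K_{\i u}(e^x)G(u)\,\mu(\d u)\,\d x$; absolute convergence of the double integral is guaranteed because its modulus is bounded by $\|F\|_{L_1(K_0)}\,\|G\|_{L_1(\d\mu)}$, and interchanging the two integrations turns $\int_\r F(x)\,\kk^{-1}[G](x)\,\d x$ into $\int_0^\infty G(u)\,\kk[F](u)\,\mu(\d u)$, which is exactly \eqref{Prsvl+}.

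The associativity identities (iv) and (v) are again Fubini arguments, but here one must first check that each intermediate function lies in the correct space so that the outer transform is defined. For (iv): by (i) the function $\kk F$ is bounded, hence $e^{-\delta u^2}\kk F\in L_1(\d\mu)$ and $\kk^{-1}[e^{-\delta u^2}\kk F]$ is well defined; substituting $\kk F(u)=\int_\r F(y)K_{\i u}(e^y)\,\d y$ and dominating the resulting integrand by $K_0(e^x)\,e^{-\delta u^2}K_0(e^y)\,|F(y)|$, which is integrable in $(u,y)$ for each fixed $x$, lets me interchange the $u$- and $y$-integrations; the inner $u$-integral then collapses, by \eqref{p_t}, to the kernel $p_\delta(x,y)$, which is precisely the composite $\kk^{-1}e^{-\delta u^2}\kk$ read as a single kernel operator, giving \eqref{K-associate}. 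For (v) the roles are reversed: by the estimate used in (ii), $e^{\delta x}\kk^{-1}G\in L_1(K_0)$, so $\kk[e^{\delta x}\kk^{-1}G]$ is defined by (i); dominating the integrand by $e^{\delta x}K_0(e^x)^2\,|G(v)|$ and invoking \eqref{K-Melin} to evaluate $\int_\r e^{\delta x}K_{\i u}(e^x)K_{\i v}(e^x)\,\d x$ identifies the result with the kernel operator $\widetilde{\qq}_\delta$ of \eqref{q_t}, that is, with $\kk e^{\delta x}\kk^{-1}$, which is \eqref{K2-associate}.

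I expect no genuinely hard step; the only point requiring care is the bookkeeping in (iv)--(v), namely verifying that the relevant intermediate function is integrable against the correct measure so that the outer transform makes sense and the dominating function for Fubini is integrable over the full product space. Everything else is controlled by the single bound $|K_{\i u}|\le K_0$ together with the integrability estimates \eqref{K0-bd1}--\eqref{K0-bd2}.
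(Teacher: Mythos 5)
Your proposal is correct and follows essentially the same route as the paper: the uniform bound $\vert K_{\i u}(e^x)\vert\le K_0(e^x)$ gives (i)--(ii) directly (with both \eqref{K0-bd1} and \eqref{K0-bd2} needed to integrate $e^{\eps x}K_0(e^x)^2$ over all of $\r$, not just \eqref{K0-bd2}), and (iii)--(v) are the same Fubini interchanges with the same dominating functions, including the observation that $e^{-\delta u^2}\in L_1(\d\mu)$ and $e^{\delta x}K_0(e^x)\in L_1(K_0)$. The only cosmetic difference is that you additionally identify the collapsed inner integrals in (iv)--(v) with the kernels $p_\delta$ and $\widetilde q_\delta$, which the paper does not need to do.
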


\begin{proof}[Proof of \eqref{L0i}] This follows from $\vert K_{\i u}(e^x)\vert \leq K_0(e^x)$.
\end{proof}
\begin{proof}[Proof of \eqref{L0-ii}]

This follows from $
\vert \kk^{-1}(G)\vert =\left\vert \int_0^\infty K_{\i u}(e^x) G(u)\mu(\d u)\right\vert \leq C K_0(e^x)$ and from the well known bounds
\begin{equation}
  \label{K0-bd1}K_0(x)\leq K_{1/2}(x)=  e^{-x}\frac{\sqrt{\pi}}{\sqrt{2x}}\leq C e^{-x}
\end{equation} for $x\geq 1$,
and
\begin{equation}
  \label{K0-bd2} K_0(e^x)\leq C_\eps e^{-\eps x/4}
\end{equation}
 for $x<0$; the latter is a consequence of   the trivial bound  $\cosh t\geq t^{2k}/(2k)!$ applied to \eqref{K-def} with $u=0$ and large enough $k$.

 Thus $e^{\eps x} K_0^2(x)\leq C_\eps^2\exp (\eps x/2)1_{x<0}+ K_0(0)+C^2\exp(-2\exp(x))1_{x>0} $  is integrable, i.e. $e^{\eps x} K_0(x)\in L_1(K_0)$.
\end{proof}
\begin{proof}[Proof of \eqref{L0iii}] (See \cite[Lemma 2.4]{yakubovich1996index} and the discussion therein.) Since
 $$\int_\r \int_0^\infty \vert F(x)K_{\i u}(e^x) G(u)\vert  \d x \mu(\d u)\leq \int_\r  \vert F(x)\vert K_{0}(e^x) \d x \int_0^\infty\vert G(u)\vert \mu(\d u)<\infty,$$
we  can apply Fubini's theorem to interchange the order of integrals:
 $$\int_\r F(x) \left(\int_0^\infty \,K_{\i u}(e^x) G(u)  \mu(\d u)\right) \d x = \int_0^\infty  \left(\int_\r F(x) K_{\i u}(e^x) \d x\right) G(u)  \mu(\d u),$$
 which is \eqref{Prsvl+}.
\end{proof}
\begin{proof}[Proof of \eqref{L0-iv}]
Identity \eqref{K-associate} in the integral form consists of change of order in the integrals:
\begin{multline*}
\int_\r \left(\int_0^\infty e^{-\delta u^2} K_{\i u}(e^x)K_{\i u}(e^y)\mu(\d u)\right) F(y)\d y
\\=
\int_0^\infty K_{\i u}(e^x) e^{-\delta u^2}\left( \int_\r  K_{\i u}(e^y)F(y)\d y \right)\mu(\d u).
\end{multline*}
The integrand is dominated by $K_0(e^x)K_0(e^y)\vert F(y)\vert e^{-\delta u^2}$. From   \eqref{mu} we see that $e^{-\delta u^2}\in L_1(\d \mu)$,  so with  $\vert F\vert \in L_1(K_0)$, this justifies the use of Fubini's theorem.
\end{proof}
\begin{proof}[Proof of \eqref{L0-v}]
Identity \eqref{K2-associate} in the integral form is
\begin{multline*}
   \int_0^\infty   G(v) \left(\int_\r e^{sx}K_{\i u}(e^x)K_{\i v} (e^x) \d x\right)\mu(\d v) \\=\int_\r  K_{\i u}(e^x)e^{\delta x} \left(\int_0^\infty K_{\i v}(e^x)G(v)\mu(\d v)\right) \d x.
\end{multline*}
The use of Fubini's theorem is again justified by the fact that
$e^{\delta x} K_0(e^x)$ is in $L_1(K_0)$; the latter follows from the  bounds  stated in the proof of \eqref{L0-ii}.
\end{proof}

\section{Semigroup property for $\qq_{s,t}$}\label{Sect:SemiQ}
In this section we prove Proposition \ref{Prop-Semi}.
The fact that
$$\int_0^\infty q_{s,t}(u,v)\d v=1$$
is a consequence of Beta integral
\cite[(7.i)]{Askey:1989} or \cite[Section 1.3]{koekoek1998askey}
   for the weight function of the continuous dual Hahn polynomials
\begin{equation}\label{WI0}
\int_0^\infty \frac{\prod_{j=1}^3 \left(\Gamma(\uC_j+\i{x})\Gamma(\uC_j-\i{x})\right)
}{\left\vert \Gamma(2\i{x})\right\vert ^2} dx
=2 \pi  \prod_{1\leq k<j\leq 3}\Gamma (\uC_k+\uC_j).
\end{equation}
We use \eqref{WI0} with $$x=v/2, \uC_1=(\uC-t)/2,\uC_2=(t-s+\i u)/2, \uC_3=(t-s-\i u)/2.$$

The  Chapman-Kolmogorov equations
\begin{equation*}\label{CK-eq} \int_0^\infty q_{r,s}(u,v) q_{s,t}(v,w)\d v=  q_{r,t}(u,w), \quad  r<s<t,
\end{equation*}
are a consequence of de Branges-Wilson integral
   \cite{deBranges:1972,Wilson:1980}
\begin{equation*}\label{WI}
\int_0^\infty \frac{\prod_{j=1}^4 \left(\Gamma(\uC_j+\i{x})\Gamma(\uC_j-\i{x})\right)
}{\left\vert \Gamma(2\i{x})\right\vert ^2} dx
=\frac{2 \pi  \prod_{1\leq k<j\leq 4}\Gamma (\uC_k+\uC_j)}{\Gamma (\uC_1+\uC_2+\uC_3+\uC_4)},
\end{equation*}
after we note that
   $$\frac{q_{r,s}(u,x) q_{s,t}(x,w)}{q_{r,t}(u,w)}=  \frac{\Gamma (\uC_1+\uC_2+\uC_3+\uC_4)}{4 \pi  \prod_{1\leq k<j\leq 4}\Gamma (\uC_k+\uC_j)}\frac{\prod_{j=1}^4 \left(\Gamma(\uC_j+\i{x}/2)\Gamma(\uC_j-\i{x}/2)\right)
}{\left\vert \Gamma(\i{x})\right\vert ^2}. $$
with $$\uC_1=(s-r+\i u)/2,\; \uC_2=(s-r-\i u)/2,\; \uC_3=(t-s+\i w)/2,\; \uC_4=(t-s-\i w)/2.$$
Related algebraic identity is used for similar purposes
in \cite[Section 3]{Bryc:2009} and in \cite[Section 7.3]{CorwinKnizel2021}.
\section{Continuous dual Hahn process}\label{Sec:ExtendedCDH}
To define the continuous dual Hahn process,  Corwin and Knizel \cite{CorwinKnizel2021}   specify its marginal distributions as a $\sigma$-finite measure on $\r$ and transition probability
distributions, and then show that they are consistent.  For technical reasons, they define the process only on the interval $[0,S)$, where $S=2+(\uC-2)1_{(0,2)}(\uC)$. For %
Theorem \ref{T1.3},
we need  this process to be extended to $s\in [0,\uC)$ with $\uC>0$. For %
Theorem \ref{T1.2},
we need  this process for $s\in (-\vA,\uC)$. %

The following summarizes Ref. \cite{Bryc-2021}, restricted to   the  time domain, $(-\infty,\uC]$, where $\uC\in\r$.
To define the family of transition probabilities  we use the orthogonality probability measure
for the continuous dual Hahn orthogonal polynomials. These are monic polynomials which depend
on three parameters
traditionally denoted by $a,b,c$, but  to avoid confusion with the parameters $\vA, \uC$ above, we will denote them by $\alpha,\beta,\gamma$.

We take $\alpha\in\r$ and $\beta,\gamma$ are either real or complex conjugates.
Let
\begin{equation*}
  \label{AnCn}
  A_n=(n+\alpha+\beta)(n+\alpha+\gamma),\quad C_n=n(n-1+\beta+\gamma).
\end{equation*}
The continuous dual Hahn orthogonal polynomials in real variable $x$ are
defined by the
three step recurrence relation
\begin{equation}\label{Recursion}
x p_n(x)= p_{n+1}(x)+(A_n+C_n-\alpha^2)p_n(x)+A_{n-1}C_n p_{n-1}(x),
\end{equation}
with the usual initialization $p_{-1}(x)=0$, $p_0(x)=1$.
Compare \cite[(1.3.5)]{koekoek1998askey}.

 Recursion \eqref{Recursion} defines a family of monic polynomials. %
Favard's theorem, as stated in  \cite[Theorem A.1]{Bryc-Wesolowski-08},    allows us to recognize orthogonality from the condition  that
\begin{equation}
  \label{Favard}
  \prod_{k=0}^nA_kC_{k+1}\geq 0 \mbox{ for all $n$}.
\end{equation}
If parameters  $\alpha,\beta,\gamma$ are such that \eqref{Favard} holds,
then
there exists a unique (see \cite[Lemma 2.1]{Bryc-2021}) probability measure
$\rho(\d x\vert \alpha,\beta,\gamma)$ such that for $m\ne n$ we have
 $$\int_\r p_n(\tfrac{x}{4}\vert \alpha,\beta,\gamma)  p_m(\tfrac{x}{4}\vert \alpha,\beta,\gamma)\rho(\d x\vert \alpha,\beta,\gamma)=0.$$
 (Notice the factor $x/4$ to match the scaling in \cite{CorwinKnizel2021}. This factor is not present in \cite{Bryc-2021}, so we need to recalculate some of the results that we quote from there.)

When defined, probability measures $\rho(\d x\vert \alpha,\beta,\gamma)$  may be discrete, continuous, or of mixed type, depending on the parameters.
We use them to define transition probabilities for $-\infty<s<t\leq \uC$ as follows
\begin{equation}\label{T-trans}
\mathfrak p_{s,t}(x,\d y)=\begin{cases}
\rho(\d y\vert \tfrac{\uC-t}{2},\tfrac{t-s - \i \sqrt{x}}{2},\tfrac{t-s + \i \sqrt{x}}{2}) & x\geq 0, \\
\rho(\d y\vert \tfrac{\uC-t}{2},\tfrac{t-s -  \sqrt{-x}}{2},\tfrac{t-s +  \sqrt{-x}}{2})  & -(\uC-s)^2\leq x<0, \\
\delta_{-(\uC-t)^2} & x<-(\uC-s)^2.
\end{cases}
\end{equation}
By \cite[Proposition 3.1]{Bryc-2021}, these measures are well defined. A version of
\cite[Theorem  3.5(i)]{Bryc-2021} says that Chapman-Kolmogorov equations hold:
for a Borel subset $U\subset \r$,  $ x\in\r$, and $-\infty<s<t<u\leq \uC$ we have
\begin{equation}\label{C-K}
  \mathfrak p_{s,u}(x,U)=\int_\r \mathfrak p_{t,u}(y,U)  \mathfrak p_{s,t}(x,\d y).
\end{equation}
\begin{definition}\label{Def.C1}
We denote by  $(\TT_t)_{-\infty<t\leq \uC}$ the Markov
 process
on state space $\r$, with transition probabilities \eqref{T-trans}.
\end{definition}

Next we follow  \cite[Definition 7.8]{CorwinKnizel2021} and introduce a family of $\sigma$-finite positive measures that are the entrance law \cite[pg. 5]{Sharpe-1988} for the Markov
process
$(\TT_t)$. For $\vA>-\uC$ and $s\leq \uC$, let
  \begin{multline}
    \label{their-nu}
   \mathfrak p_{s}(\d x)=
     \frac{(\vA+\uC)(\vA+\uC+2)}{16\pi} \frac{\vert \Gamma(\frac{s+\vA+\i \sqrt{x}}{2},\frac{\uC-s+\i \sqrt{x}}{2} )\vert ^2}{2\sqrt{x}\vert \Gamma(\i \sqrt{x})\vert ^2}1_{x>0}\;\d x
     \\ +
    \sum_{\{j:\; 2j+\vA+s<0\}} p_j(s) \delta_{-(2j+\vA+s)^2}(\d x),
  \end{multline}
 where for $j\in\mathbb{Z}\cap[0, -(\vA+s)/2) $  the discrete masses  are given by
  \begin{align}
    \label{their-atoms}
    p_j(s)
&    = \frac{(\vA+\uC)(\vA+\uC+2)}{4}\frac{\Gamma(\frac{\uC-\vA-2s}{2},\frac{\vA+\uC}{2})}{\Gamma(-\vA-s)}\cdot \frac{(\vA+2j+s)(\vA+s,\frac{\vA+\uC}{2})_j}{(\vA+s)j!(1+\frac{2s+\vA-\uC}{2})_j}.
  \end{align}
This is recalculated from  \cite[Definition 7.8]{CorwinKnizel2021}; %
the parameters in
\cite[(1.12)]{CorwinKnizel2021} are $u=\uC/2$, $v=\vA/2$.

\begin{remark}\label{R:no-atoms}  We note that there are no atoms when $-\vA<s<\uC$, so in this case $\mathfrak p_s(\d x)$ is absolutely continuous with the density supported on $(0,\infty)$. %
\end{remark}

The following summarizes properties of  measures $\mathfrak p_s$ and transition probabilities
$\mathfrak p_{s,t}(x,\d y)$ that we will need.
\begin{theorem} \label{TH:CDH} Suppose that $\vA+\uC>0$, %
and $U\subset \r$ is a Borel set.
\begin{enumerate}[(i)]
    \item Measures $\{\mathfrak p_s: -\infty< s<\uC\}$ are the entrance law for the
     continuous dual Hahn process, i.e., they satisfy
    \begin{equation}
        \label{p-inv}
        \mathfrak p_t(U)=\int_\r \mathfrak p_{s,t}(x,U) \mathfrak p_s(\d x), -\infty< s<t\leq\uC.
    \end{equation}
    \item       For $-\infty<s<t< \uC$  and $x>0$, transition probabilities for the
     continuous dual Hahn process  are absolutely continuous with density \eqref{p2q0}.

\end{enumerate}

\end{theorem}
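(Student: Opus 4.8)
The plan is to treat the two parts separately, reducing each to objects already built in the main text together with the analysis of \cite{Bryc-2021}.

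For part (ii) I would start from the definition \eqref{T-trans}: when $x>0$ and $-\infty<s<t<\uC$, the transition measure is $\rho(\d y\mid\tfrac{\uC-t}{2},\tfrac{t-s-\i\sqrt x}{2},\tfrac{t-s+\i\sqrt x}{2})$, the orthogonality measure of the continuous dual Hahn polynomials with $\alpha=\tfrac{\uC-t}{2}$ and $\beta=\overline\gamma=\tfrac{t-s-\i\sqrt x}{2}$. Since $t<\uC$ gives $\alpha>0$ and $s<t$ gives $\re\beta=\tfrac{t-s}{2}>0$ with $\beta,\gamma$ a genuine conjugate pair, one lands in the purely absolutely continuous regime of the classical measure, with no discrete component. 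Matching the standard weight against \eqref{p2q0} after the change of variable $y=4\lambda^2$ (which absorbs the $x/4$ scaling in the definition of $\rho$) yields the stated density; equivalently, this is the kernel \eqref{ICAK-q} under the identification $\TT_s=Z_s^2$, already shown to be a Markov transition density in Proposition \ref{Prop-Semi}.

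For part (i) I would first prove the entrance law on the restricted range $-\vA<s<t<\uC$. By Remark \ref{R:no-atoms} both $\mathfrak p_s$ and $\mathfrak p_t$ are then absolutely continuous and supported on $(0,\infty)$, and by part (ii) so is the transition, so \eqref{p-inv} becomes an identity of densities. Under $\TT_s=Z_s^2$ the measure $\mathfrak p_s$ is, up to the fixed normalizing constant $\Kab$ which cancels in the homogeneous identity \eqref{p-inv}, the pushforward of $\widetilde\nu_s=\varphi_s(u)\,\d u$ under $u\mapsto u^2$ (this is the relation $\tfrac{1}{\Kab}\mathfrak p_s(\d x)=\widetilde{\mathfrak p}_s(x)\,\d x$ used in the proof of Theorem \ref{T1.3} together with $2\sqrt x\,\widetilde{\mathfrak p}_s(x)=\varphi_s(\sqrt x)$). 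Thus in this regime \eqref{p-inv} is precisely Lemma \ref{Rem:Yizao}, whose proof rests on the de Branges--Wilson integral, and the generic case is settled.

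The remaining, genuinely harder case is $s\le-\vA$, where $\mathfrak p_s$ carries the atoms at $-(2j+\vA+s)^2$ with masses \eqref{their-atoms} and where the kernel \eqref{T-trans} passes through its discrete and boundary-delta branches (the cases $x<0$). Here I would invoke the Chapman--Kolmogorov relation \eqref{C-K} and the full entrance-law statement of \cite[Theorem 3.5]{Bryc-2021}, after verifying that \eqref{their-nu}--\eqref{their-atoms} are the correct transcriptions of \cite[Definition 7.8]{CorwinKnizel2021} under the parametrization $u=\uC/2$, $v=\vA/2$ and the scaling $x\mapsto x/4$. The main obstacle is exactly this atomic bookkeeping: one must track how the discrete masses of $\mathfrak p_s$, propagated by the mixed-type transitions $\mathfrak p_{s,t}(x,\cdot)$ for $x<0$, recombine with the continuous part so as to reproduce both the density and the atoms of $\mathfrak p_t$. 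This step genuinely needs the residue computations of \cite{Bryc-2021} rather than a single integral identity, and I would present it by recalculating the relevant masses and citing the consistency established there.
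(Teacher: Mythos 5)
Your proof of part (ii) is essentially the paper's: the paper likewise observes that for $s<t<\uC$ and $x>0$ the parameters land in the regime $\alpha>0$, $\beta=\bar\gamma$, $\re(\beta)>0$, where the continuous dual Hahn orthogonality measure is purely absolutely continuous, and reads the density off from \cite[Section 1.3]{koekoek1998askey}; your rescaling $y=4\lambda^2$ and the identification with \eqref{ICAK-q} via $\TT_s=Z_s^2$ are exactly the bookkeeping the paper leaves implicit.

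For part (i) your route differs slightly, and in a useful direction. The paper disposes of the entire entrance-law claim with a single citation to \cite[Theorem 4.1]{Bryc-2021} (note: Theorem~4.1, not Theorem~3.5, which the paper invokes only for the Chapman--Kolmogorov equations \eqref{C-K}). You instead split off the regime $-\vA<s<t<\uC$, where everything is absolutely continuous, and reduce \eqref{p-inv} there to Lemma \ref{Rem:Yizao} via the relations $\tfrac{1}{\Kab}\mathfrak p_s(\d x)=\widetilde{\mathfrak p}_s(x)\d x$ and $2\sqrt{x}\,\widetilde{\mathfrak p}_s(x)=\varphi_s(\sqrt x)$; this is a correct reduction and makes that regime self-contained within the paper (though the key integral there is the three-parameter Askey beta integral \eqref{WI0} normalizing \eqref{ICAK-q}, rather than the four-parameter de Branges--Wilson integral you name). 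What this buys is independence from the external reference precisely where the paper's own machinery (Section \ref{Sect: DMS} and Appendix \ref{Sect:SemiQ}) already suffices; what it costs is a case analysis that, as written, is not quite exhaustive: the endpoint $t=\uC$ with $s>-\vA$ is covered neither by Lemma \ref{Rem:Yizao} (stated for $t<\uC$) nor by your atomic case $s\le-\vA$. For the atomic regime you, like the paper, ultimately defer to the residue computations of \cite{Bryc-2021}; you are right that this is where the real work lies and that no single integral identity in the present paper replaces it, so the citation there is unavoidable rather than a gap.
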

\begin{proof}

(i) is a version of
\cite[Theorem 4.1 ]{Bryc-2021}

(ii) This is the case of  measure $\rho(\d x\vert \alpha,\beta,\gamma)$ with parameters
$\alpha>0$, $\beta=\bar \gamma$,  $\re(\beta)>0$, so the density can be read out from
\cite[Section 1.3]{koekoek1998askey}, see also \cite[Section 3]{Bryc:2009} and \cite[Section 1.3]{CorwinKnizel2021}.
\end{proof}

\begin{remark}\label{R-ac} By Remark \ref{R:no-atoms} and Theorem \ref{TH:CDH}(ii), transition probabilities \eqref{T-trans} with the entrance law \eqref{their-nu}
define an  absolutely continuous Markov process  $(\TT_t)_{-\vA<t<\uC}$  on the state space $(0,\infty)$. This is the process that appears in Theorem \ref{T1.2}.
However, if the state space is enlarged to include points $x<0$, then the transition probabilities may have a discrete component; for example, at $x=-(\uC-s)^2$,
transition probability $\mathfrak p_{s,t}(x,\d y)$ is a degenerate measure concentrated at $-(\uC-t)^2$, see \cite{Bryc-2021}. In particular,
if $s<-\vA$ then the entrance law \eqref{their-nu} %
 may    put some mass at points in $(-\infty,0)$.
\end{remark}

\arxiv{
\section{Proofs of Yakubovich identities}\label{Sect-Yakub-ids}
For completeness, we sketch the proofs of Yakubovich identities \eqref{Yak-theta} and \eqref{Yak-Hart-Wat}, omitting issues of convergence.
\begin{proof}[Sketch of proof of \eqref{Yak-theta}]
For real $u$ and $\la>0$, we have
$$\int_{-\infty}^\infty K_{iu}(e^x)I_\la(e^x) \d x=\frac{1}{\la^2+u^2},$$
where $I_\la(x)$ denotes the modified Bessel $I$ function.  Recalling Kontorovich-Lebedev transforms  $\kk$ and $\kk^{-1}$ \eqref{K-L+} and \eqref{K-inv}, this formula says
 $\kk [I_\la(e^x)]=\frac{1}{\la^2+u^2}$. Hence
 \begin{equation*}
  I_\la(e^x)=\kk^{-1}\frac{1}{\la^2+u^2}.
 \end{equation*}
  Writing this explicitly, %
  we have
\begin{align*}
  I_\la(e^r)&=\frac{2}{\pi} \int_0^\infty \frac{1}{\la^2+u^2} \frac{K_{\i u}(e^r)}{\vert \Gamma(\i u)\vert ^2}\d u
 =  \frac{2}{\pi} \int_0^\infty \int_0^\infty e^{-(\la^2+u^2) t} \d t \frac{K_{\i u}(e^r)}{\vert \Gamma(\i u)\vert ^2}\d u
 \\
& =\int_0^\infty e^{-\la^2 t}\left[\frac{2}{\pi} \int_0^\infty e^{-t u^2} \frac{K_{\i u}(e^r)}{\vert \Gamma(\i u)\vert ^2}\d u\right].\nonumber
\end{align*}
Comparing this with    \eqref{MY1}, we see that
\begin{equation*}\label{Last-straw}
  \frac{2}{\pi} \int_0^\infty e^{-t u^2} \frac{K_{\i u}(e^r)}{\vert \Gamma(\i u)\vert ^2}\d u =2 \theta(e^r,2t),
\end{equation*}
i.e., we have \eqref{Yak-theta}.
\end{proof}
}

\arxiv{
\begin{proof}[Sketch of proof of \eqref{Yak-Hart-Wat}]
 We use Macdonald's formula \cite[Vol II, 7.7.6]{erdelyi1954fg} for the product of the Bessel K functions with the same index:
\begin{equation}
  \label{Macdonalds}
    K_{\i u} (e^x)   K_{\i u} (e^y)=\frac12 \int_{-\infty}^\infty \exp[-\frac12(e^{r+x-y}+e^{r+y-x}+e^{x+y-r}) ]K_{\i u}(e^r) \d r .
\end{equation}

  Recall that $\pp _t=\kk^{-1} e^{-tu^2}\kk$, i.e.,
  \begin{align*}
  p_t(x,y)&=\frac{2}{\pi} \int_0^\infty e^{-tu^2} \frac{K_{\i u} (e^x)   K_{\i u} (e^y)}{\vert \Gamma(\i u)\vert ^2}\d u
  \\
  &\stackrel{\eqref{Macdonalds}}{=} \frac{2}{\pi} \int_0^\infty e^{-tu^2} \left[\frac12 \int_{-\infty}^\infty \exp[-\frac12(e^{r+x-y}+e^{r+y-x}+e^{x+y-r} )]K_{\i u}(e^r) \d r \right]\frac{1}{\vert \Gamma(\i u)\vert ^2}\d u\nonumber
  \\&
  \stackrel{\rm Fubini}{=}\int_{-\infty}^\infty \exp[-\frac12(e^{r+x-y}+e^{r+y-x}+e^{x+y-r}) ] \left[ \frac{1}{\pi} \int_0^\infty  e^{-tu^2} \frac{K_{\i u}(e^r)}{\vert \Gamma(\i u)\vert ^2} \d u\right] \d r\nonumber
  \\&\stackrel{\eqref{Yak-theta}}{=} \int_{-\infty}^\infty \exp[-\frac12(e^{r+x-y}+e^{r+y-x}+e^{x+y-r}) ] \theta(e^r,2t) \d r.\nonumber
  \end{align*}
\end{proof}
}


\subsection*{Acknowledgement}
This research project was initiated following WB's visit in 2018 to Columbia University (whose hospitality is greatly appreciated) and it would not have been possible without Ivan Corwin and Alisa Knizel generously sharing their preliminary results that later appeared in \cite{CorwinKnizel2021}.

We extend our thanks to Semyon Yakubovich for helpful comments on the early draft of this paper. We appreciate
information from Ofer Zeitouni  about the  reciprocal processes.
 We thank Guillaume Barraquand for Ref. \cite{barraquand2021steady} and the discussion of its contents.

WB's research was partially supported by Simons Foundation/SFARI Award Number: 703475. AK's research was partially supported by
The Natural Sciences and Engineering Research Council of Canada.
YW's research was partially supported by Army Research Office, US (W911NF-20-1-0139).
JW's research was partially supported by grant
2016/21/B/ST1/00005 of National Science Centre, Poland.


\end{document}